\newcommand{\unit}{\mathds{1}}   			
\let\oldtocsection=\tocsection
\let\oldtocsubsection=\tocsubsection
\let\oldtocsubsubsection=\tocsubsubsection
\renewcommand{\tocsection}[2]{\hspace{0em}\oldtocsection{#1}{#2}}
\renewcommand{\tocsubsection}[2]{\hspace{1em}\oldtocsubsection{#1}{#2}}
\renewcommand{\tocsubsubsection}[2]{\hspace{2em}\oldtocsubsubsection{#1}{#2}}
\DeclareMathAlphabet{\mathpzc}{OT1}{pzc}{m}{it}
\newcommand{\virgolette}{``}
\newcommand{\sfH}{\mathsf{H}}
\newcommand{\Hom}{\mathsf{Hom}}
\newcommand{\Ass}{\mathsf{Ass}}
\newcommand{\Pen}{\mathsf{Pen}}
\newcommand{\de}{\mathrm{d}}
\newcommand{\hot}{\text{ht}}
\renewcommand{\Im}{\ensuremath{\mathrm{Im}}}
\theoremstyle{definition}
\newtheorem{thm}{Theorem}
\newtheorem{defn}{Definition}
\newtheorem{prop}[thm]{Proposition}
\newtheorem{lem}[thm]{Lemma}
\newtheorem{corollary}[thm]{Corollary}
\newtheorem{remark}{Remark}
\newtheorem{example}{Example}
\newtheorem{remarkl}[remark]{Remarkl}
\begin{document}


\title{An Alternative to Homotopy Transfer for $A_\infty$-Algebras}
\date{}

\author[C. A. Cremonini]{C.\ A.\ Cremonini}
\address[C. A. Cremonini]{Arnold-Sommerfeld-Center for Theoretical Physics, Ludwig-Maximilians-Universit\"at M\"unchen, Theresienstr. 37, D-80333 Munich, Germany}
\email{carlo.alberto.cremonini@gmail.com}

\author[V. E. Marotta]{V.\ E.\ Marotta}
\address[V. E. Marotta]{Università di Trieste, Dipartimento di Matematica e Geoscienze,  Via A. Valerio 12/1, 34127 Trieste, Italy}
\email{vincenzoemilio.marotta@units.it}

\dedicatory{To the memory of Roberto \virgolette Rob" Catenacci, a friend.}

\begin{abstract}
In this work, we propose a novel approach to the homotopy transfer procedure starting from a set of homotopy data such that the first differential complex is a differential graded module over the second one. We show that the module structure may be used to induce an $A_\infty$-algebra on the second differential complex, constructed in a similar fashion to the homotopy transfer $A_\infty$-algebra. We prove that, under certain conditions, the $A_\infty$-algebras obtained with this procedure are quasi-isomorphic to the homotopy transfer one. On the other hand, when the side conditions do not hold, we find that there are cases where the existence of an $A_\infty$-quasi-isomorphism with the homotopy transfer $A_\infty$-algebra is obstructed. In other words, we obtain a new $A_\infty$-algebra on the second complex, inequivalent to the homotopy transfer one. Lastly, we prove that these $A_\infty$-algebras are not infinitesimal Hochschild deformations of the homotopy transfer $A_\infty$-algebra.
\end{abstract}

\maketitle
\setcounter{footnote}{0}

{\baselineskip=12pt
\tableofcontents
}

\section{Introduction}

$\infty$-algebras were introduced by Stasheff \cite{Stasheff}, under the name \emph{strongly homotopy algebras}, to describe \virgolette group-like" topological space. Soon after their first appearance, they found a vast realm of applications, both in mathematics and physics. For instance, they represent a cornerstone in the study of formality of K\"{a}hler manifolds \cite{Deligne:1975, Merkulov1999} and deformation quantisation of Poisson manifolds \cite{Kontsevich:defor}, whereas in physics they appear to be the underlying structure describing quantum field theory \cite{Jurco:2018sby, Jurco:2019yfd, Ciric:2020hfj} and string field theory, see \cite{Witten:1986qs, Zwiebach:1997fe, Erler:2013xta, Cremonini:2019xco, Doubek:2020rbg} and references therein.

\subsection{Motivation}~

In this paper, we aim at constructing an $A_\infty$-algebra by considering two quasi-isomorphic differential complexes such that (a subring of) the first one admits a bimodule structure on the second one. In this way, we may induce an $A_\infty$-algebra on the second complex by a technique that is reminiscent of the one used for the Homotopy Transfer Theorem. The main difference resides in the type of algebraic data that is used to induce an $A_\infty$-algebra on the second complex. In order to motivate why this specific setting became relevant to us, let us discuss the setting of Open Superstring Field Theory as in \cite{Erler:2013xta}. In that context, the authors consider three complexes, the BRST complexes of string fields with picture numbers $0,\, -1$ and $-2$, respectively. For the sake of simplicity, consider the picture numbers as labels for the three complexes. These three complexes are quasi-isomorphic, where the quasi-isomorphisms are called \emph{Picture Changing Operators}, and are bimodules over the picture-number 0 complex, which is also the only one naturally equipped with an algebraic structure. The authors used these structures to define on the picture $-1$ complex a 2-product (interpreted as a vertex of interaction between three string fields) which turned out to be non-associative, thus spoiling the gauge-invariance of the action. The gauge-invariance is recovered by adding higher products (i.e., higher vertices of interactions) that encoded the structure of an $A_\infty$-algebra. Notably, the construction of this $A_\infty$-algebra does not rely on the transfer of algebraic data between complexes. We analyse here a similar setting and study under which conditions one can leverage on module structures to induce algebraic structures from one complex to another. In particular, this construction will heavily rely on a novel type of interplay between algebraic and homotopy data.

Supermanifolds and the quasi-isomorphic complexes of differential and integral forms \cite{Manin:1988ds, Noja:2021xos, Cremonini:2023ccv} give another relevant case at the heart of our motivations. Integral forms are defined as a complex of modules over differential forms but are not equipped with a ring structure. Thus the quest for the induction of algebraic structures on this complex. By using the quasi-isomorphisms and the dga-algebra structure on differential forms one can rely on the usual Homotopy Transfer Theorem to transfer an $A_\infty$-algebra on integral forms. Nonetheless, the very rich structure we just described suggests alternative constructions of $A_\infty$-algebras coming from the module structure. We partly explore this topic here and leave the full discussion to the upcoming work \cite{Cremonini-future}, where we shall use this technology to introduce intersection theory on supermanifolds.

We shall briefly discuss here the basic setting of the $A_\infty$-category, for more details see \cite{Keller, Loday2020}.

\subsection{\texorpdfstring{$A_\infty$}{A-infinity}-Algebras}~

Let $\Bbbk$ be a field and let $A$ be a $\mathbb{Z}$-graded vector space over $\Bbbk$,
\begin{equation}
A  \coloneqq \bigoplus_{ i \in \mathbb{Z}} A_i,
\end{equation} 
where the $A_i$ are $\Bbbk$-vector spaces for every $i$. 
\begin{defn}[\textbf{$A_\infty$-Algebra}] An ${A}_\infty$-algebra is a pair $(A, \{ m_n \}_{n \in \mathbb{N}_+} )$, where $A$ is a $\mathbb{Z}$-graded $\Bbbk$-vector space and $ \{ m_n \}_{n \in \mathbb{N}_+}$ is a collection of homogeneous $\Bbbk$-linear maps of degree $2-n$, 
\begin{equation}
m_n \colon A^{\otimes n} \longrightarrow A,
\end{equation}
such that they satisfy the following condition for every $n \in \mathbb{N}_+$:
\begin{equation} \label{condo}
\sum_{n = r + s + t} (-1)^{r+ st} m_{r+t+1}( \unit^{\otimes r} \otimes m_s \otimes \unit^{\otimes t} ) = 0.
\end{equation} 
The maps $m_n$ are called $n$-products. We will denote it by $(A, m_n).$
\end{defn}

This definition encodes the fact that all of the $n$-products satisfy higher associativity relations \emph{up to homotopy}. For more details, see \cite{Stasheff, Keller, vallette:hal-00757010, Loday2020} and references therein.
 
Note that the maps $\{ m_n \}_{n \in \mathbb{N}_+}$ in the definition of the $A_\infty$-algebra belong to the $\Hom$-space of $\Bbbk$-multilinear maps $\Hom (A^{\otimes n}, A)$ for any $n\geq 0$. The space $\Hom (A^{\otimes n}, A)$ is a chain complex $\Hom (A^{\otimes n}, A) \coloneqq \bigoplus_{k\geq 0} \Hom_k (A^{\otimes n}, A), $ where $\Hom_k (A^{\otimes n}, A)$ is the space of multilinear maps $f \colon A^{\otimes n} \rightarrow A$ of degree $k$, with the differential
\begin{align} 
& \partial_A \colon \Hom (A^{\otimes n} , A) \longrightarrow  \Hom (A^{\otimes n} , A) \\
& \partial_A \varphi  \coloneqq [\de, \varphi] = \de \circ \varphi - (-1)^{|\varphi|} \varphi \circ \de_{A^{\otimes n}}. \label{eqn:del}
\end{align}
for any $\varphi \in \Hom (A^{\otimes n}, A),$ where the differential $\de_A$ is extended to a map $\de_{A^{\otimes n}} \colon A^{\otimes n} \rightarrow A^{\otimes n}$ by
$\de_{A^{\otimes n}} \coloneqq \sum_{n -1 = s+t} 
\unit^{\otimes s} \otimes \de \otimes \unit^{\otimes t}$. 

Lastly, let us recall the notion of morphism between $A_\infty$-algebras. An $A_\infty$\emph{-morphism between} the $A_\infty$-algebras $(A, m_n)$ and $(A', m'_n)$ is a family of maps $f_n \colon A^{\otimes n} \to A',$ for all $n \geq 1$ of degree $1-n$ satisfying the relations
\begin{align} \label{eqn:Ainftymorph}
    \sum_{r+s+t = n} (-1)^{r+st} f_{r+t+1} \left( 
 \unit_A^{\otimes r} \otimes m_s \otimes \unit_A^{\otimes t}\right)  = \sum_{l=1}^n (-1)^u m'_l(f_{i_1} \otimes f_{i_2} \otimes ... \otimes f_{i_l}) \, ,
\end{align}
where $i_1 + i_2 + ...+ i_n= n$ and $u= (l-1)(i_1-1) + (l-2)(i_2-1)+ \ldots + (i_{l-1}-1)$.

\subsection{Homotopy Transfer of Algebraic Data}~ \label{subsect:Homotopytransfer}

The homotopy transfer of algebraic data represents the main source of inspiration for this work. We briefly discuss it here and refer to \cite{Kadeishvili, vallette:hal-00757010, Loday2020} for more details on this construction.

Consider the homotopy data 
\begin{equation}
	\begin{tikzcd}[every arrow/.append style={shift left}]
		\hspace{-.2cm} \left( A^\bullet , \de_A \right) \arrow[loop left, distance=2em, start anchor={[yshift=-0.7ex]west}, end anchor={[yshift=0.7ex]west}, "h_A"] \arrow{r}{Y}  & \left( B^\bullet , \de_B \right) \arrow[l, "Z"] 
	\end{tikzcd}
\end{equation}
with $h_A$ degree $-1$ map satisfying
\begin{align}
    \unit_A - Z \circ Y = \de_A \circ h_A + h_A \circ \de_A \, ,
\end{align}
together with the algebraic data given by an associative product $\wedge$ on $A^\bullet$ making $A^\bullet$ a differential graded associative algebra. By using the quasi-isomorphism $Z,Y$, the product on $A^\bullet$ can be transported to $B^\bullet$ by defining
\begin{align} \label{eqn:m2hot}
    m_2^\hot \coloneqq Y \circ \wedge \circ Z^{\otimes 2} \colon B^{\otimes 2} \to B \, .
\end{align}
It can be shown that, in general, $m_2^\hot$ is not associative, in other words, this transport procedure does not preserve the associativity of the product. Nonetheless, it can easily be seen that the associativity of $m_2^\hot$ is violated in a controlled way, i.e.
\begin{align}
    m_2^\hot(m_2^\hot(b \otimes b') \otimes b'') - m_2^\hot (b \otimes m_2^\hot(b' \otimes b'')) =  \partial_B m_3^\hot(b \otimes b' \otimes b'') \, ,
\end{align}
for all $b, b', b'' \in B,$ where 
\begin{align} \label{eqn:m3hot}
    m_3^\hot \coloneqq - Y \circ \wedge \circ \bigl( (h_A \circ \wedge \circ Z^{\otimes 2}) \otimes Z \bigr) + Y \circ \wedge \circ \bigl( Z \otimes (h_A \circ \wedge \circ Z^{\otimes 2}) \bigr)
\end{align}
and $\partial_B$ is defined as in Equation \eqref{eqn:del}.
This construction can be iterated and it is possible to show that $m_2^\hot$ and $m_3^\hot$ are associative up to an exact term with respect to $\partial_B.$ This leads to stating the following remarkable result, proved in \cite{Kadeishvili} for $B = \sfH_\bullet (A)$.

\begin{thm}[\textbf{Homotopy Transfer Theorem}]
    Consider the homotopy data introduced above and let $B^\bullet$ be endowed with the (higher) products $m_n^\hot,$ for all $n \in \mathbb{N}_+,$ obtained as described above. Then $(B^\bullet, m_n^\hot)$ forms an $A_\infty$-algebra. 
\end{thm}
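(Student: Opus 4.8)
The plan is to follow the classical Kadeishvili/Merkulov strategy, exploiting the cobar–bar adjunction (equivalently, the tensor-coalgebra reformulation of $A_\infty$-structures), rather than verifying the relations \eqref{condo} by a direct induction on each term. First I would recall that giving an $A_\infty$-structure $\{m_n\}$ on $B^\bullet$ is the same as giving a degree $+1$ coderivation $M$ on the tensor coalgebra $T^c(B[1])$ with $M^2=0$; similarly the dga structure on $A^\bullet$ (namely $\de_A$ together with $\wedge$) is encoded by a coderivation/codifferential, and the homotopy data $(Y,Z,h_A)$ assemble into maps of graded coalgebras. The key construction is a map $P\colon T^c(B[1])\to T^c(B[1])$ built recursively from $Y,\wedge,Z$ and $h_A$ — essentially the "sum over planar binary trees with leaves decorated by $Z$, internal edges by $h_A$, internal vertices by $\wedge$, and root by $Y$" — and then one sets $M\coloneqq$ the coderivation obtained by corestricting along this tree sum. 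The formulas \eqref{eqn:m2hot} and \eqref{eqn:m3hot} are precisely the $n=2$ and $n=3$ truncations of this tree expansion, with the signs $(-1)^{r+st}$ being exactly those produced by the Koszul sign rule on $T^c(B[1])$.

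The main steps I would carry out are: (i) fix the sign/degree conventions by passing to the shifted space $B[1]$, so that all structure maps become degree $+1$ and the unsigned associativity of $\wedge$ translates into the sign pattern appearing in \eqref{condo}; (ii) define the tree sum $\{m_n^\hot\}$ rigorously by the recursion $m_n^\hot = \sum Y\circ\wedge\circ(p_k\otimes p_{n-k})$ where $p_1 = Z$ and $p_j = -h_A\circ\wedge\circ\sum(p_i\otimes p_{j-i})$ for $j\geq 2$ (matching \eqref{eqn:m2hot}, \eqref{eqn:m3hot}); (iii) show $\de_A h_A + h_A \de_A = \unit_A - ZY$ propagates through the recursion so that, after applying the codifferential of $A^\bullet$ and using $\de_A$-compatibility of $Y,Z$ together with the Leibniz rule for $\wedge$, the defect in the $A_\infty$-relations on $B$ telescopes; (iv) conclude $M^2=0$, i.e.\ \eqref{condo} holds for all $n$. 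The inductive heart is the observation that the "error term" at level $n$ is $\partial_B$-exact with the correcting homotopy supplied by the next tree, exactly as sketched in the excerpt for $n\leq 3$.

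The step I expect to be the main obstacle is the bookkeeping in (iii): showing that the full sum over trees of the images of the $A_\infty$-relation collapses. Concretely, when one computes $\sum_{n=r+s+t}(-1)^{r+st}m^\hot_{r+t+1}(\unit^{\otimes r}\otimes m^\hot_s\otimes \unit^{\otimes t})$, each composite is again a sum over trees; one must match (a) the terms where an internal edge $h_A$ meets $\de_A$ — these get replaced, via $\de_A h_A + h_A\de_A = \unit - ZY$, by a term with the edge contracted (two adjacent $\wedge$'s fusing by associativity) plus a term with a $ZY$ inserted (which recombines a $Y$ with a neighbouring $Z$ and splits the tree), against (b) the terms where $\de_A$ acts on a leaf $Z$ or the root $Y$, using $\de_B Z = Z\de_A$ etc., and (c) the Leibniz defect of $\wedge$ against $\de_A$. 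The claim is that (a), (b), (c) cancel in pairs once the signs from the Koszul rule on $B[1]$ are tracked; making this cancellation manifest — ideally by the clean coalgebra argument "$M = (\text{corestriction of }Y\circ\mu\circ\text{inclusion})$ where $\mu$ is the codifferential of the \emph{acyclic} trivial extension / mapping-cone type object built from $h_A$, hence $M^2=0$ automatically" — is where the real work lies, and I would invest effort in choosing the formulation that makes the signs bookkeep themselves.
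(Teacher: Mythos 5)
Your proposal follows essentially the same route the paper relies on: the paper merely exhibits $m_2^\hot$ and $m_3^\hot$, states that iterating the construction (the sum over planar trees with $Z$ on the leaves, $h_A$ on internal edges, $\wedge$ at the vertices and $Y$ at the root) produces the higher products, and defers the verification of the relations \eqref{condo} to Kadeishvili; your tree recursion together with the telescoping mechanism $\de_A \circ h_A + h_A \circ \de_A = \unit_A - Z \circ Y$ is exactly that classical argument, with the tensor-coalgebra packaging serving only as sign bookkeeping. So your approach matches the one the paper invokes, and I see no discrepancy to report.
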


This construction is one of our main guides in order to build a procedure inducing an $A_\infty$-algebra structure on the differential complex $(B^\bullet, \de_B)$ starting from a different type of algebraic data on $A^\bullet$.

\subsection{Outline and Results}~

The outline of the paper is as follows: in Section \ref{sec:HTWMS} we discuss the basic assumptions that will be the main framework of this paper.
In Section \ref{sect:HomotopyTransMod} we show the existence of new  $A_\infty$-algebras coming from an analogue of the homotopy transfer procedure performed under new assumptions. In Section \ref{sec:HochDef} we discuss how this novel procedure relates to homotopy transfer. 

\textbf{Section} \ref{sec:HTWMS} begins with the definition of the main object of this work, the homotopy data given by two quasi-isomorphic cochain complexes $(A, \de_A)$ and $(B, \de_B)$ with quasi-isomorphisms $Y, Z$ such that $A$ is a differential graded associative algebra, $B$ is a differential graded $\Im(Z)$-bimodule and the quasi-isomorphisms $Y,Z$ are $\Im(Z)$-bimodule homomorphisms, where $\Im(Z)$ is assumed to be a subring of $A$. We call this \emph{Homotopy Data with Module Structure}, see Definition \ref{def:homotopydatamodule}. As an example, we show that the de Rham complex on a (split) supermanifold and its quasi-isomorphic complex of integral forms can be endowed with homotopy operators such that they represent an example of Homotopy Data with Module Structure. We thus proceed to show that the above homotopy data satisfying at least one of the so-called side conditions only admit an associative algebra on $B$ coming from the homotopy transfer of the algebra structure on $A,$ i.e. we have the following

\textbf{Proposition} \ref{prop:noAinftyalgebra}. Let $(A, \de_A)$ and $(B, \de_B)$ be homotopy data with module structure as in Definition \ref{def:homotopydatamodule}, endowed with the homotopy transfer $A_\infty$-algebra $(B, m_n^\hot)$. Moreover, suppose that
$h_A$ satisfies the left-side condition. Then the homotopy transfer $A_\infty$-algebra $(B, m_n^\hot)$ reduces to an associative algebra, i.e., $m_n^\hot = 0$ for $n > 2.$ 

This represents one of our motivations for the investigation of an alternative homotopy transfer procedure, so that we may obtain an $A_\infty$-algebra on $B$ admitting higher terms even when the aforementioned side condition holds. 

Inspired by \cite{Markl:2001}, we explore under which circumstances we can preserve the right side condition under a change of homotopy data removing the obstruction to the existence of a strong homotopy equivalence. Moreover, we show that if the quasi-isomorphisms satisfy $Z \circ Y \circ Z= Y,$ there always exists a redefinition of the homotopy operator $h_B$ implementing the right side condition with the same quasi-isomorphisms. We stress that if the right side condition $Z \circ h_B =0$ holds, then we have that the quasi-isomorphisms satisfy $Z \circ Y \circ Z = Z.$ Furthermore, if the right side condition holds, then the obstruction to the existence of a strong homotopy equivalence can always be removed.

We also discuss a weaker notion of the side conditions given by $Y \circ Z \circ h_B = 0$ and $h_B \circ Y \circ Z = 0,$ called \emph{Weak Side Conditions}. We show that if the quasi-isomorphisms satisfy the condition $(Z \circ Y)^2 = Z \circ Y,$ then the homotopy operator can always be redefined in such a way that the weak side conditions hold. The weak side conditions will appear later on in the construction of an $A_\infty$-algebra from the Homotopy Module-Induction.

\textbf{Section} \ref{sect:HomotopyTransMod} is the core of this work and contains the main results concerning the homotopy transfer product induced by a new set of algebraic data.
In particular, in Subsection \ref{subsect:HMT}, we define the main object of our study the Homotopy Module-Induction procedure as in

\textbf{Definition} \ref{def:HTWMSA}.
Let $(A, \de_A)$ and $(B, \de_B)$ be quasi-isomorphic chain complexes endowed with the homotopy data with bimodule structure of Definition \ref{def:homotopydatamodule}. Then we define the  \emph{Homotopy Module-Induced Product (HMIP)}
\begin{align} 
 m_2 ( b \otimes b' ) \coloneqq k_1  Z (b)  \rhd b' + k_2 \, h_B (b) \lhd Z(b') \, ,  
\end{align}
for all $b, b' \in B$ and $k_1,k_2 \in \Bbbk$.

Our analysis starts off by showing that the product $m_2$ differs from the homotopy transfer product $m_2^\hot$  by $\partial_B \tilde{m_2},$ for some $\tilde{m}_2 \in \Hom_{-1}(B^{\otimes 2}, B)$ and is associative up to homotopy, i.e., $\Ass = \partial_B m_3,$ for some $m_3 \in \Hom(B^{\otimes 3}, B),$ see Subsection \ref{subsect:HMTA}. We thus check the first higher associativity relation involving the 3-product $m_3$ and $m_2$ and find out that it is verified in two non-trivial cases: when the coefficients $k_1, \, k_2$ appearing in the definition of $m_2$ are such that $k_1 = - k_2,$ or when the homotopy data satisfies the weak side conditions, see Lemma \ref{lem:Penm4}.   

Before completing the discussion of these two cases, we show that we recover an $A_\infty$-algebra from the Homotopy Module-Induction when the side conditions hold, i.e., we have the following

\textbf{Theorem} \ref{thm:Ainftyside}.
 Let $(A, \de_A), (B, \de_B)$ be homotopy data with module structure as in Definition \ref{def:homotopydatamodule} such that the homotopy $h_B$ satisfies the side conditions of Definition \ref{def:AppA}. 
Then the Homotopy Module-Induction gives an $A_\infty$-algebra $(B, m_n)$ with $m_1= \de_B,$ $m_2$ given by Equation \eqref{HTWMSG}, and higher products
 \begin{align}
      m_n \coloneqq (m_{n-1} \circ m_2) \circ h_B \ , \qquad \text{for } n \geq 3 \, . 
 \end{align}

Thus, we can move on to showing that the Homotopy Module-Induction gives an $A_\infty$-algebra also in the weaker assumptions $k_1=-k_2$ or homotopy data satisfying the weak side conditions. 

\textbf{Theorem} \ref{thm:weaksideorcoefficients}.
   Let $(A, \de_A), (B, \de_B)$ be homotopy data with module structure as in Definition \ref{def:homotopydatamodule} and assume that either of the following conditions hold
   \begin{enumerate}
       \item the homotopy $h_B$ satisfies the weak side conditions; \label{it:mainthm1}
       \item the coefficients appearing in the definition of $m_2$ are such that $k_1 = - k_2.$ \label{it:mainthm2}
   \end{enumerate}
   Then the Homotopy Module-Induction gives the $A_\infty$-algebras $(B, m_n)$ with $m_1 \coloneqq \de_B, $ $m_2$ as in Definition \ref{def:HTWMSA} and higher products
   \begin{align}
       m_n  \coloneqq & (-1)^{n-1} \left( m_2 \circ m_{n-1} \right) \circ_{fb} h_B + \left( m_{n-1} \circ m_2 \right) \circ_{fb} h_B \\
         = & \tilde{m}_2 \circ_Z m_{n-1} + m_{n-1} \circ_{h_B} \tilde{m}_2 \, ,
   \end{align}
where for $n=3$ we consider only the second equality.

\textbf{Section} \ref{sec:HochDef} concentrates on the description of the relation between the homotopy transfer $A_\infty$-algebra and the Homotopy Module-Induced one. In particular, we can find a strict quasi-isomorphims between them when the side conditions hold, as in 

\textbf{Proposition} \ref{prop:HMISCquasiisoA}.
Let Theorem \ref{thm:Ainftyside} hold. Then the $A_\infty$-algebra $(B, m_n)$ is strictly quasi-isomorphic to the dga algebra $(A, \de_A, \wedge)$ seen as an $A_\infty$-algebra if and only if $k_1 \neq - k_2$. The quasi-isomorphism reads $f_1 \coloneqq (k_1 + k_2 ) Z$ and $f_i \coloneqq 0,$ for all $i \neq 1$.

This yields that, in this case, the homotopy transfer $A_\infty$-algebra and the Homotopy Module-Induced one are (strictly) quasi-isomorphic. Moreover, the $A_\infty$-Massey products they induce on the cohomology $\sfH(B)$ are isomorphic. 

When Theorem \ref{thm:weaksideorcoefficients} holds in the case of hypothesis \ref{it:mainthm2}, there is an obstruction to the existence of such quasi-isomorphism, as shown in

\textbf{Proposition} \ref{prop:noquasi-iso}. 
Let Theorem \ref{thm:weaksideorcoefficients} hold in the case of hypothesis \ref{it:mainthm2} and assume that the product $\wedge$ on $A$
induces a non-trivial product on $\sfH (A)$, i.e., $0 \neq [\wedge] \in \sfH \left( \Hom \left( A^{\otimes 2} , A \right) \right)$. Then the cohomology class of $\wedge$ represents an obstruction to the existence of a quasi-isomorphism $f_\bullet:(B,m_n)^{\otimes \bullet} \to (B,m_n^\hot)$.

We also address the problem of determining whether the Homotopy Module-Induced $A_\infty$-algebra is an infinitesimal deformation of the standard homotopy transfer $A_\infty$-algebra, which is stated in the following 

\textbf{Proposition} \ref{prop:notHocdef}.
Assume that Theorem \eqref{thm:Ainftyside} holds and let $(B, m_n^\hot)$ and $(B, m_n)$ be the $A_\infty$-algebras induced by the homotopy transfer and the Homotopy Module-Induction, respectively. Then $(B, m_n)$ is not an infinitesimal deformation of $(B, m_n^\hot).$

A relevant example of this result is given by the case of an associative Homotopy Module-Induced Product.

\subsection{Acknowledgements}~

We thank Branislav Jur\v{c}o, Simone Noja, Ruggero Noris, Ingmar Saberi, Ivo Sachs and Richard Szabo for helpful discussions. The work of {\sc C.A.C.} and {\sc V.E.M.} was supported in part by the GACR Grant EXPRO 19-28268X. The work of C.A.C. is supported by the Excellence Cluster Origins of the DFG under Germany’s Excellence Strategy EXC-2094 390783311. The work of V.E.M. is supported by PNRR MUR projects PE0000023-NQSTI. 

\section{Homotopy Data and Algebraic Structures} \label{sec:HTWMS}

In this Section, we introduce the notion of homotopy data endowed with a bimodule structure and discuss the interplay between these data and the side conditions.
For a concise introduction to homotopy data and homotopy transfer see \cite{Keller}, \cite{Crainic:2004}, \cite{vallette:hal-00757010} and references therein.

\medskip

\subsection{Homotopy Data with Module Structure}~

The cornerstone of this work is the following set of homotopy data. Here and throughout the paper every (graded) vector space is defined over a field $\Bbbk$ of characteristic zero, which will always be omitted from our notation. 

\begin{defn}[\textbf{Homotopy data with bimodule structure}] \label{def:homotopydatamodule}
Let $(A^\bullet, \de_A )$ and $(B^\bullet, \de_B)$ be two quasi-isomorphic cochain complexes together with two degree $-1$ maps (henceforth homotopies) $h_A \colon A^\bullet \to A^{\bullet -1}$, $h_B \colon B^\bullet \to B^{\bullet -1}$, i.e.,
\begin{equation}\label{HTWMSA}
	\begin{tikzcd}[every arrow/.append style={shift left}]
		\hspace{-.2cm} \left( A^\bullet , \de_A \right) \arrow[loop left, distance=2em, start anchor={[yshift=-0.7ex]west}, end anchor={[yshift=0.7ex]west}, "h_A"] \arrow{r}{Y}  & \left( B^\bullet , \de_B \right) \arrow[l, "Z"] \arrow[loop right, distance=2em, start anchor={[yshift=0.7ex]east}, end anchor={[yshift=-0.7ex]east}, "h_B"] 
	\end{tikzcd}
\end{equation}
such that the following conditions hold  
\begin{align}
	\unit_A - Z \circ Y &= \left[ \de_A , h_A \right] \equiv \de_A \circ h_A + h_A \circ \de_A = \partial_A h_A \, , \label{HTWMSB} \\
        \unit_B - Y \circ Z & = \left[ \de_B , h_B \right] \equiv \de_B \circ h_B + h_B \circ \de_B = \partial_B h_B \, . \label{HTWMSBA}
\end{align}
For the sake of notation, in the following, we will denote the complexes by $A^\bullet \equiv A $ and $B^\bullet \equiv B $. Let $A$ and $B$ be endowed also with the following structures:
\begin{enumerate}[leftmargin=*]
\item $(A, \wedge, \de_A)$ is a differential graded associative algebra, i.e., \ $A$ is equipped with a $\Bbbk$-bilinear map
\begin{align}\label{HTWMSC}
	\wedge \colon A \otimes A \rightarrow & \, A \\
 a \otimes a^\prime \mapsto & \, a \wedge a^\prime \ ,
\end{align}
and $\de_A$ satisfies the graded Leibniz rule with respect to $\wedge$:
\begin{align}
    \de_A \left( a \wedge a^\prime \right) = \left( \de_A a \right) \wedge a^\prime + (-1)^{|a|} a \wedge \left( \de_A a^\prime \right) \ ,
\end{align}
where $|a|$ denotes the parity of an homogeneous element $a \in A$;
\vspace{0.5cm}
\item $\mbox{Im} (Z)$ is a subring of $A$ and  $(B, \de_B)$ is a dg-bimodule over $\mbox{Im} (Z)$, i.e., there is a left action of  $\mathrm{Im}(Z)$ on $B$ denoted by
\begin{align}\label{HTWMSD}
	\rhd \, \colon \mathrm{Im}(Z) \otimes B &\to \,  B \\
 Z(b) \otimes b' &\mapsto \,  Z(b) \rhd b' \, , 
\end{align}
and a right action of $\mathrm{Im}(Z)$ on $B$ denoted by
\begin{align}
 \lhd \, \colon B \otimes \mathrm{Im}(Z) &\to B \\
 b \otimes Z(b') & \to b \, \lhd \, Z(b') \, ,
\end{align}
such that the following properties hold:   
\begin{align}\label{HTWMSF}
	Z(b) \rhd \left( Z(b') \rhd b'' \right) &= \left( Z(b) \wedge Z(b') \right) \rhd b'' \ , \\
 \left( b \lhd Z(b') \right) \lhd Z(b'') &= b \lhd \left( Z(b') \wedge Z(b'')  \right) \, ,
\end{align}
for all $b, \, b', \, b'' \in B$, and \begin{align}\label{HTWMSE}
	\de_B \left( Z(b) \rhd b' \right) &= Z\left( \de_B b \right) \rhd b' + \left( -1 \right)^{|b|} Z(b) \rhd \left( \de_B b' \right) \, , \\
 \de_B \left( b \lhd Z(b') \right) &= \de_B b \lhd Z(b') + (-1)^{|b|} \lhd Z(\de_B b') \, ,
\end{align}
for all $b, \, b' \in B$ homogeneous elements;
\vspace{0.5cm}
\item \label{it:ImZhomo} The maps $Y \colon A \rightarrow B$ and $Z \colon B \rightarrow A $ are $\mbox{Im}(Z)$-bimodule homomorphisms, i.e.,
\begin{align}
Y (Z (b) \wedge a) = Z (b) \rhd Y (a) \, , \qquad & Z (Z (b) \rhd b^\prime) = Z(b)\wedge Z (b^\prime) \, , \\
Y ( a \wedge Z(b) ) = Y (a) \lhd Z (b) \, , \qquad & Z ( b \lhd Z (b') ) = Z (b) \wedge Z (b') \, ,
\end{align}
for all $a \in A, \  b, b^\prime \in B$.
\end{enumerate}
The cochain complexes $A,B$ endowed with the above structures are called \emph{homotopy data with bimodule structure}.
\end{defn}

\begin{remark}
It follows from Property (\ref{it:ImZhomo}) that 
\begin{equation}
    Y (Z(b) \wedge Z(b^\prime)) = Z(b) \rhd (Y \circ Z)(b^\prime) = (Y \circ Z(b)) \lhd Z (b^\prime),
\end{equation}
for every $b, b^\prime \in B$, and that
\begin{align}
    Z(b) \rhd b ' - b \lhd Z(b') \in \ker(Z) \, ,
\end{align}
for all $b, \, b' \in B.$ 

If $\wedge$ is graded-commutative, then the left $\Im(Z)$-action and the right $\Im(Z)$-action must agree on $\Im(Y)$ up to the sign coming from the graded-commutativity of $\wedge.$
\end{remark}

We present here one of the motivating examples of the construction described in this paper. We will limit ourselves to describing the homotopy data with module structure as in Definition \eqref{def:homotopydatamodule}; a full analysis with the construction of the $A_\infty$ algebra with the methods described in this paper is deferred to a future work \cite{Cremonini-future} where it will be linked to intersection theory on supermanifolds. The core of this construction relies on the fact that supermanifolds are naturally equipped with two complexes of forms: differential forms (or \emph{superforms}), which are a generalisation of usual differential forms on ordinary manifolds, and \emph{integral forms}, which define sections which are integrable on odd-codimension-0 submanifolds. The former comes naturally with the structure of an algebra, and the latter comes as a module over superforms, but no algebraic structure is given. We will not linger on definitions regarding supermanifolds, but rather just mention the basics which are needed for the present purposes. We refer the reader to \cite{Manin:1988ds, Noja:2021xos} for a detailed introduction to supermanifolds.

\begin{example}[\textbf{Integral Forms on Split Supermanifolds}] \label{eg:supermani}
Recall that a supermanifold $\mathcal{SM}$ of dimension $(m |n)$ is a locally decomposable, locally ringed space $\left( \left| \mathcal{SM} \right| , \mathcal{O}_{\mathcal{SM}} \right),$ where $\left| \mathcal{SM} \right|$ is a topological space, $\mathcal{O}_{\mathcal{SM}}$ is the structure sheaf of $\mathbb{Z}_2$-graded commutative rings and there exists a locally free sheaf $\mathcal{F}$ of rank $n$ such that, for any open subset $\mathcal{U} \subset  \left| \mathcal{SM} \right|,$  
   $ \mathcal{O}_{\mathcal{SM}} \rvert_{\mathcal{U}} \cong \bigwedge^\bullet \mathcal{F}^* \rvert_{\mathcal{U}},$ as sheaves of $\mathbb{Z}_2$-graded rings.
We call
$\mathcal{SM}_{red} = \left( \left| \mathcal{SM} \right| , \mathcal{O}_{\mathcal{SM}_{red}} \right)$, the \emph{reduced manifold},  where $\mathcal{O}_{\mathcal{SM}_{red}} \coloneqq \mathcal{O}_{\mathcal{SM}} / \mathcal{J}$ is the sheaf of functions on $|\mathcal{SM}_{red}|$, with $\mathcal{J}$ the ideal sheaf generated by nilpotent elements. We denote by $i^* \colon \mathcal{O}_{\mathcal{SM}} \to \mathcal{O}_{\mathcal{SM}_{red}}$ the surjective sheaf morphism corresponding to the embedding of locally ringed spaces $i \colon \mathcal{SM}_{red} \hookrightarrow \mathcal{SM}$ and its extension to forms. 

As a consequence of the fact that the structure sheaf $\mathcal{O}_{\mathcal{SM}}$ has both even and odd sections, the de Rham complex of a supermanifold is unbounded from above:
\begin{center}
		\begin{tikzcd}[cells={nodes={minimum height=2em}}]
			0 \arrow[r] & \mathcal{O}_\mathcal{SM} \arrow[r,"\de_{dR}"] &  \mathsf{\Omega}^{(1)} \left( \mathcal{SM} \right) \arrow[r,"\de_{dR}"] &  \ldots \arrow[r,"\de_{dR}"] & \mathsf{\Omega}^{(m)} \left( \mathcal{SM} \right) \arrow[r,"\de_{dR}"] & \ldots
		\end{tikzcd} \ .
	\end{center}

In particular, the complex does not contain the space of integrable sections (i.e., a space of \virgolette top forms'' analogous to the Determinant for an ordinary manifold). Nonetheless, on supermanifolds, a second complex can be introduced, the complex of \emph{integral forms}:
\begin{center}
		\begin{tikzcd}[cells={nodes={minimum height=2em}}]
			\ldots \arrow[r] & \mathsf{\Sigma}^{(0)} \left( \mathcal{SM} \right) \arrow[r,"\delta"] &  \ldots \arrow[r,"\delta"] & \mathsf{\Sigma}^{(m-1)} \left( \mathcal{SM} \right) \arrow[r,"\delta"] & \mathsf{\Sigma}^{(m)} \left( \mathcal{SM} \right) \arrow[r,"\delta"] & 0
		\end{tikzcd} \ ,
	\end{center}
where we denoted $\mathsf{\Sigma}^{(\bullet)} \left( \mathcal{SM} \right) \equiv \mathpzc{B}er \left( \mathcal{SM} \right) \otimes_{\mathcal{O}_{\mathcal{SM}}} S^{m-\bullet} \Pi \mathcal{T} \mathcal{SM}$, i.e., integral forms are defined by tensoring the Berezinian sheaf (the super-analogous of the Determinant sheaf for ordinary manifolds) with (parity-changed) polyvector fields and $\delta$ is the nilpotent operator defined in \cite{Manin:1988ds}. It has been proven that the two complexes are isomorphic in cohomology, see, e.g., \cite{Noja:2021xos}. We shall show that these two complexes fit the following homotopy data
\begin{equation}\label{exhomotopydata}
    \begin{tikzcd}[every arrow/.append style={shift left}]
		\hspace{-.2cm} \left( \mathsf{\Omega} \left( \mathcal{SM} \right) , \de \right) \arrow[loop left, distance=2em, start anchor={[yshift=-0.7ex]west}, end anchor={[yshift=0.7ex]west}, "h_{\mathsf{\Omega}}"] \arrow{r}{Y}  & \left( \mathsf{\Sigma} \left( \mathcal{SM} \right) , \delta \right) \arrow[l, "Z"] \arrow[loop right, distance=2em, start anchor={[yshift=0.7ex]east}, end anchor={[yshift=-0.7ex]east}, "h_{\mathsf{\Sigma}}"]
	\end{tikzcd} \ .
\end{equation}
Explicit realisations of these quasi-isomorphisms are well-known in the Physics community as \emph{Picture Changing Operators} since \cite{Friedan:1985ge}, were formally interpreted in a geometric context in \cite{Belopolsky:1997jz} and translated in the language of (co)chain maps in \cite{Cremonini:2023ccv}. We will focus only on \emph{split} supermanifolds\footnote{For the sake of clarity, one could think about smooth supermanifolds, as all smooth supermanifolds are split (see \cite{Batchelor:1979}).} (i.e., those supermanifolds for which the structure sheaf is globally isomorphic to the reduced structure sheaf times a Grassmann algebra) since in these cases it is possible to find an easy global expression for the map $Z \colon \mathsf{\Sigma} \left( \mathcal{SM} \right) \to \mathsf{\Omega} \left( \mathcal{SM} \right)$, which is surjective on $\mathsf{\Omega} \left( \mathcal{SM}_{red} \right)$. We will address the case of non-split (and non-projected) supermanifolds in \cite{Cremonini-future}. Integral forms are naturally equipped with an $\mathsf{\Omega}(\mathcal{SM})$-module structure, where the action is simply defined by the pairing forms-vector fields. Given an open subset $\mathcal{U} \subseteq \left| \mathcal{SM} \right|$ with local coordinates $\left\lbrace x^i,\theta^\alpha \right\rbrace$ (the former coordinates are even, the latter are odd), the maps $Y$ and $Z$ are defined as follows.
Let $\omega \in \mathsf{\Omega}^{(p)}(\mathcal{SM})$ and consider its local expression
\begin{align}
    \omega \rvert_\mathcal{U} = \sum_{i_1 \ldots i_p = 1}^m \omega_{i_1 \ldots i_p} \left( x \right) \de x^{i_1} \wedge \ldots \wedge \de x^{i_p} + \bar \omega \, ,
\end{align}
where $\bar \omega \in \ker(i^*),$ thus we define
\begin{align}
		 Y^{(p)} & \colon \mathsf{\Omega}^{(p)} \left( \mathcal{U} \right) \to \mathsf{\Sigma}^{(p)} \left( \mathcal{U} \right) \\
	\omega \rvert_{\mathcal{U}} \mapsto Y(\omega \rvert_{\mathcal{U} }) & \coloneqq \sum_{i_1 \ldots i_p = 1}^m \omega_{i_1 \ldots i_p} \left( x \right) \de x^{i_1} \wedge \ldots \wedge \de x^{i_p} \rhd \prod_{\alpha = 1}^n \theta^\alpha \mathcal{D} \otimes \bigwedge_{i=1}^m \pi \partial_{x^i} ,
\end{align}
where $\rhd \colon \mathsf{\Omega} (\mathcal{SM}) \otimes \mathsf{\Sigma} (\mathcal{SM}) \to \mathsf{\Sigma} (\mathcal{SM})$ denotes the action of $\mathsf{\Omega}(\mathcal{SM})$ on $\mathsf{\Sigma}(\mathcal{SM}),$ $\mathcal{D}$ is the local generator of sections of the Berezinian sheaf (see \cite{Manin:1988ds, Noja:2021xos}), and $\pi$ is the parity changing operator. Similarly, let 
$\mu \in \mathsf{\Sigma}^{(m-p)}(\mathcal{SM})$ and consider
\begin{align}
\mu \rvert_{\mathcal{U}} = \sum_{i_1 , \ldots , i_{p} = 1}^m \mu_{i_1 \ldots i_{p}} \left( x , \theta \right) \mathcal{D} \otimes \bigwedge_{s = 1}^p \pi \partial_{x^s} + \bar \mu \, , 
\end{align}
where $\bar \mu$ is the component of the integral form containing at least one vector field $\pi \partial_{\theta},$ and define
\begin{align}
	\nonumber Z^{(m-p)} & \colon \mathsf{\Sigma}^{(m-p)} \left( \mathcal{U} \right) \to \mathsf{\Omega}^{(m-p)} \left( \mathcal{U} \right) \\
    \mu \rvert_{\mathcal{U}} \mapsto Z(\mu \rvert_{\mathcal{U}}) & \coloneqq \sum_{i_1 , \ldots , i_{p} = 1}^m \mu_{i_1 \ldots i_{p}}^{\mathrm{top}} \left( x \right) \iota_{\partial_{x^{i_1}}} \ldots \iota_{\partial_{x^{i_p}}} \left( \de x^1 \wedge \ldots \wedge \de x^m \right) \, ,
\end{align}
where $\iota$ is the contraction by a vector field. 
The global well-definiteness of these maps is shown in \cite{Cremonini:2023ccv}).
The fact that $\Im ( Z ) = \mathsf{\Omega} \left( \mathcal{SM}_{red} \right) \subset \mathsf{\Omega} \left( \mathcal{SM} \right)$ allows us to easily find candidates for the homotopy operators such that Equations \eqref{HTWMSB} and \eqref{HTWMSBA} hold given by (up to coefficients which we neglect for the moment)
\begin{align}
    h_{\mathsf{\Omega}} \coloneqq \iota \chi \ \ , \ \ h_{\mathsf{\Sigma}} \coloneqq \bullet \wedge \pi \chi \ \ , \ \ \chi = \sum_{\alpha} \theta^\alpha \frac{\partial}{\partial \theta^\alpha} \ ,
\end{align}
that is, $h_{\mathsf{\Omega}}$  is the contraction along the \emph{odd Euler vector field}, while $h_{\mathsf{\Sigma}}$ is the (symmetrised) multiplication by the parity-changed field. $\Im ( Z ) = \mathsf{\Omega} \left( \mathcal{SM}_{red} \right)$ also allows to prove that condition (\ref{it:ImZhomo}) of Definition \eqref{def:homotopydatamodule} is satisfied, so that Equation \eqref{exhomotopydata} defines an example of homotopy data with module structure. It can easily be shown that the quasi-isomorphism $Y,Z$ and the homotopy operators $h_{\mathsf{\Omega}}, h_{\mathsf{\Sigma}}$ satisfy the following identities
\begin{align}
    Z \circ Y \circ Z &= Z \, , \qquad h_\mathsf{\Omega} \circ Z = 0 = Z \circ h_\mathsf{\Sigma} \, , \\
    Y \circ Z \circ Y &= Y 
      \, , \qquad h_\mathsf{\Sigma} \circ Y = 0 = Y \circ h_\mathsf{\Omega} \, .
\end{align}
\end{example}

We will discuss the results of this paper for the homotopy data of integral forms on supermanifolds in the upcoming work \cite{Cremonini-future}. Note that this may be particularly relevant since the complex of integral forms does not have any natural algebraic structure.

The last properties of the homotopy data defined by integral forms on supermanifolds set the stage for the introduction of the so-called side conditions for the homotopy operator of quasi-isomorphic cochain complexes. 

\medskip

\subsection{Homotopy Transfer and Side Conditions}~ \label{sect:Motivation}

Let us discuss the interplay between homotopy transfer and the so-called side conditions. Recall from \cite{Markl:2001, Crainic:2004} the following

\begin{defn}[\textbf{Side Conditions}] \label{def:AppA}
 Let $A,B$ be quasi-isomorphic cochain complexes by quasi-isomorphisms $Y, Z$ together with the homotopy $h_B.$ This homotopy $h_B$ is said to satisfy the \emph{side conditions} if the following  hold:
 \begin{align}\label{HMTWSDA}
     h_B \circ Y = 0 \, ,  \quad Z \circ h_B =0  \, , \quad h_B^2=0 \, .
 \end{align}
 An analogous definition holds for the homotopy $h_A,$ i.e., $h_A$ satisfies the side conditions if
 \begin{align}\label{HMTWSDAA}
   Y \circ h_A = 0 \, ,  \quad h_A \circ Z =0  \, , \quad h_A^2=0 \, . 
 \end{align}
 In the following, we will be mainly concerned with the implementation of some side conditions only; in particular, we will use the \emph{right-side condition} for $h_A,h_B$ 
 \begin{align}\label{MasseywuhtCA}
    Z \circ h_B = 0 \, , \quad  \, Y \circ h_A = 0 \, ,
\end{align}
and the \emph{left-side condition} for $h_A, h_B$
\begin{align}\label{HMTWSDAB}
    h_A \circ Z =0 \, , \quad \, h_B \circ Y = 0 \, .
\end{align}
\end{defn}
Recall that, as discussed in \cite{Markl:2001, Markl2006, Crainic:2004}, the side conditions can always be implemented for a deformation retract,\footnote{As discussed in \cite{Crainic:2004}, the deformation retract condition given by $Z \circ Y = 1$. In order to change the homotopy operator $h_B$ such that the side conditions still hold, it suffices to have $Z \circ Y \circ Z = Z$ and $Y \circ Z \circ Y = Y$, as described below.} via an appropriate modification of the homotopy operator.

We point out that the possibility of enforcing the left/right-side conditions, together with the module structure, greatly simplifies the usual homotopy transfer procedure. In particular, it gives an associative algebra on $B$. 

\begin{prop} \label{prop:noAinftyalgebra}
 Let $(A, \de_A)$ and $(B, \de_B)$ be homotopy data with module structure as in Definition \ref{def:homotopydatamodule}, endowed with the homotopy transfer $A_\infty$-algebra $(B, m_n^\hot)$. Moreover, suppose that
$h_A$ satisfies the left-side condition as in Definition \ref{def:AppA}. Then the homotopy transfer $A_\infty$-algebra $(B, m_n^\hot)$ reduces to an associative algebra, i.e., $m_n^\hot = 0$ for $n > 2.$ 
\end{prop}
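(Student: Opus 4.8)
The plan is to feed the homotopy transfer through its standard tree formula and observe that the left-side condition $h_A\circ Z=0$ annihilates every tree that has at least one internal edge. Recall (see \cite{Kadeishvili, vallette:hal-00757010, Loday2020}) that, since $(A,\wedge,\de_A)$ is a dga, the transferred products can be written as
\[
m_n^\hot \;=\; \sum_{T} \pm\, Y\circ(\wedge,h_A)_T\circ Z^{\otimes n},
\]
where $T$ ranges over planar rooted binary trees with $n$ leaves and $(\wedge,h_A)_T\colon A^{\otimes n}\to A$ is built by decorating each leaf with $\unit_A$, each internal (trivalent) vertex with $\wedge$, and each internal edge --- an edge joining two internal vertices --- with $h_A$, then composing from the leaves toward the root. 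For $n=2,3$ this is exactly \eqref{eqn:m2hot} and \eqref{eqn:m3hot}. So it is enough to show that the contribution of every tree with $n\geq 3$ leaves vanishes, and then to read off that $(B,\de_B,m_2^\hot)$ is a dga.

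First I would isolate a convenient subtree. If $n\geq 3$, then $T$ has $n-1\geq 2$ internal vertices, so the root is not the only internal vertex; choosing an internal vertex $v$ of maximal depth, both of its children are leaves (a \emph{cherry}), and since $v$ is not the root the edge from $v$ to its parent is an internal edge and therefore carries an $h_A$. Evaluating $(\wedge,h_A)_T$ from the leaves downward, the output of $v$ on the corresponding pair of inputs $b,b'\in B$ is $\wedge\bigl(Z(b)\otimes Z(b')\bigr)=Z(b)\wedge Z(b')$, which lies in $\Im(Z)$ because $\Im(Z)$ is a subring of $A$; explicitly, by Property \ref{it:ImZhomo} of Definition \ref{def:homotopydatamodule}, $Z(b)\wedge Z(b')=Z\bigl(Z(b)\rhd b'\bigr)$. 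Applying the $h_A$ on the parent edge of $v$ then gives $h_A\bigl(Z(Z(b)\rhd b')\bigr)=(h_A\circ Z)\bigl(Z(b)\rhd b'\bigr)=0$ by the left-side condition \eqref{HMTWSDAB}. Since $(\wedge,h_A)_T$ is a composition in which this null factor occurs, the whole tree contribution vanishes; summing over $T$ gives $m_n^\hot=0$ for every $n\geq 3$.

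It then remains to record that $m_1^\hot=\de_B$ and $m_2^\hot=Y\circ\wedge\circ Z^{\otimes 2}$ make $B$ a differential graded associative algebra: the relation \eqref{condo} with $n=3$ for the (already established) $A_\infty$-algebra $(B,m_n^\hot)$ reads $m_2^\hot(m_2^\hot\otimes\unit)-m_2^\hot(\unit\otimes m_2^\hot)=\partial_B m_3^\hot$, whose right-hand side is now $0$, so $m_2^\hot$ is strictly associative (and $\de_B$ is a derivation of it by the $n=2$ relation). The one point to be careful about is the tree bookkeeping --- namely that the edges immediately above the leaves are \emph{not} internal edges, so the cherry really receives $Z(b)\otimes Z(b')$ with no intervening $h_A$, and that a single vanishing factor in a composition kills the entire tree; once that is set up, nothing else is needed. (Equivalently one can avoid trees and argue by induction on $n$ using the recursive definition of $m_n^\hot$, proving that the auxiliary maps $B^{\otimes n}\to A$ feeding into $Y$ take values in $\Im(Z)$ for $n=2$ and vanish for $n\geq 3$.)
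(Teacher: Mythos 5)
Your proof is correct and follows essentially the same route as the paper: expand $m_n^\hot$ over planar binary trees with $h_A$ on internal edges, use that $Z$ is an $\Im(Z)$-module homomorphism to rewrite $Z(b)\wedge Z(b')=Z\bigl(Z(b)\rhd b'\bigr)$, and kill every tree with $n\geq 3$ leaves via the left-side condition $h_A\circ Z=0$. Your \emph{cherry} argument (deepest internal vertex, necessarily feeding an $h_A$-decorated edge) is just a precise formalization of the paper's informal statement that one can always move a $Z$ from the entries onto an internal leg.
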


\begin{proof}
In order to discuss the main argument of the proof, let us consider the case of the 3-product $m_3^\hot$ (see, for instance, \cite{vallette:hal-00757010}):
\begin{align}
    m_3^{\hot} ( b \otimes b' \otimes b'' ) & \coloneqq Y  \bigl( Z (b) \wedge h_A  \left( Z (b') \wedge Z (b'') \right) - h_A  \left( Z (b) \wedge Z (b') \right) \wedge Z (b'') \bigr)  \\
    & = Y  \bigl( Z (b) \wedge (h_A \circ Z) \left( Z(b') \rhd b'' \right) - (h_A \circ Z) \left( Z (b) \rhd b' \right) \wedge Z (b'') \bigr) = 0 \ ,
\end{align}
where we have used the assumption that $Z$ is an $\Im(Z)$-module homomorphism $\wedge \circ \left( Z \otimes Z \right) = Z \circ \rhd  \circ \left( Z \otimes \unit \right)$ and $h_A \circ Z = 0$. 

The above argument may be generalised as follows. Any planar diagram with $n$ entries, of degree $n-2$ (i.e., with an insertion of the homotopy $h_A$ in any internal branch) will be trivially zero, because the homomorphism assumption of $Z$ will always allow moving one of the operators $Z$ acting on the entries to an internal leg, thus leading to $h_A \circ Z,$ see Equation \eqref{MAAA}. 
\end{proof}

Let us discuss under which assumptions some of the side conditions can be implemented on quasi-isomorphic cochain complexes.
We will focus on the implementation of the \emph{right-side condition} \eqref{MasseywuhtCA}, because of its relation to the obstruction to lifting a homotopy equivalence to a \emph{strong homotopy equivalence}, i.e. an extension of the homotopy equivalence given by Equations \eqref{HTWMSB} and \eqref{HTWMSBA} to (higher) $2n$-degree maps $Y^{(2n)}$, $Z^{(2n)}$ and (higher) $2n+1$-degree homotopies $h_A^{(2n+1)},$ $h_B^{(2n+1)},$ for all $n \geq 1,$ see \cite{Markl:2001}. 

In order to explore this relation, let us discuss how the obstruction to the existence of a strong homotopy equivalence can be understood. Note that Equation \eqref{HTWMSB} yields
\begin{align}\label{MasseywuhtD}
      \de_A \circ Z \circ h_B + Z \circ h_B \circ \de_B = \de_A \circ h_A \circ Z + h_A \circ Z \circ \de_B \, .
\end{align}
We can define the usual differential on the space of maps $\partial_{BA} \colon \Hom \left( B , A \right) \to \Hom \left( B , A \right)$  and recast Equation \eqref{MasseywuhtD} as
\begin{align}\label{MasseywuhtDA}
    \partial_{BA} \left( Z \circ h_B \right) = \partial_{BA} \left( h_A \circ Z \right) \ \implies \ \partial_{BA} \left( Z \circ h_B - h_A \circ Z \right) = 0 \ .
\end{align}
The cohomology class
\begin{align} \label{MasseywuhtDAA}
[Z \circ h_B - h_A \circ Z] \in \sfH(\Hom(B,A))
\end{align} 
is the obstruction, described in \cite{Markl:2001}, to lifting a homotopy equivalence to a strong homotopy equivalence\footnote{Note that an analogous argument can be made by considering the combination $Y \circ h_A - h_B \circ Y \in \Hom(A,B)$.}. We may interpret Equation \eqref{MasseywuhtDAA} as an obstruction to simultaneously implementing the left and right-side conditions for $h_A$ and $h_B$ (up to homotopy). In \cite{Markl:2001} it is shown that given a homotopy data, it is always possible to define another one by modifying either the homotopy $h_A$ or the homotopy $h_B$ so that the obstruction is removed, i.e., $Z \circ h_B - h_A \circ Z$ becomes a $\partial_{BA}$-exact element. In our case, we aim to remove the obstruction \eqref{MasseywuhtDAA} while preserving the right-side condition given by Equation \eqref{MasseywuhtCA}; to do so, we recall the modification of the homotopy operator described in \cite{Markl:2001} and show the following property, analogous to the characterisation given in \cite{Crainic:2004}. 

\begin{prop} \label{prop:sideZYZ}
 Let $A, B$ be homotopy data with quasi-isomorphisms $Y,Z$ and suppose that $Z \circ Y \circ Z = Z.$ Then there always exists a homotopy operator $h_B'$ such that 
 $Z \circ h'_B = 0.$  
  Conversely, let the homotopy $h_B$ satisfy Equation \eqref{MasseywuhtCA}, then $Z = Z \circ Y \circ Z$.
\end{prop}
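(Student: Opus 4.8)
The proposition has two directions, of which the second (the ``conversely'') is a short algebraic consequence of the homotopy relations, so I would dispose of it first. Assume $Z \circ h_B = 0$. Applying $Z$ on the left to the homotopy identity \eqref{HTWMSBA}, namely $\unit_B - Y \circ Z = \de_B \circ h_B + h_B \circ \de_B$, we get $Z - Z \circ Y \circ Z = Z \circ \de_B \circ h_B + Z \circ h_B \circ \de_B$. Since $Z$ is a cochain map, $Z \circ \de_B = \de_A \circ Z$, so the first term is $\de_A \circ (Z \circ h_B) = 0$; the second term is $(Z \circ h_B) \circ \de_B = 0$. Hence $Z = Z \circ Y \circ Z$. (Note this uses the \emph{equality} $Z\circ h_B=0$, not merely the cohomological version; if only the class vanishes one recovers $Z=Z\circ Y\circ Z$ only up to homotopy, which is the interpretation alluded to around \eqref{MasseywuhtDAA}.)

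For the forward direction, the plan is to exhibit $h_B'$ explicitly by the standard ``projection'' trick used to enforce side conditions (as in \cite{Crainic:2004, Markl:2001}), adapted so that it only enforces the \emph{right}-side condition $Z\circ h_B'=0$ and needs only the hypothesis $Z\circ Y\circ Z=Z$ rather than the full deformation-retract identity. I would set
\begin{align}
    h_B' \coloneqq h_B - Y \circ Z \circ h_B \,,
\end{align}
i.e. $h_B' = (\unit_B - Y\circ Z)\circ h_B$. First I check it is again a legitimate homotopy, i.e. $\unit_B - Y\circ Z = \partial_B h_B'$: compute $\partial_B h_B' = \partial_B h_B - \partial_B(Y\circ Z\circ h_B)$, and since $Y,Z$ are cochain maps $\partial_B(Y\circ Z\circ h_B) = Y\circ Z\circ \partial_B h_B = Y\circ Z\circ(\unit_B - Y\circ Z) = Y\circ Z - Y\circ(Z\circ Y\circ Z) = Y\circ Z - Y\circ Z = 0$ using the hypothesis. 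Hence $\partial_B h_B' = \partial_B h_B = \unit_B - Y\circ Z$, as required, so $(A,B)$ with $h_B'$ is again homotopy data (with the same $Y,Z$). Then I verify the right-side condition: $Z\circ h_B' = Z\circ h_B - (Z\circ Y\circ Z)\circ h_B = Z\circ h_B - Z\circ h_B = 0$, again exactly by the hypothesis $Z\circ Y\circ Z=Z$.

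The computation is essentially mechanical, so the only real subtlety — and the step I would be most careful about — is bookkeeping the fact that the \emph{bimodule/algebra} structure of Definition \ref{def:homotopydatamodule} plays no role here: this proposition is purely about the underlying homotopy data, and one must resist the temptation to also try to preserve $h_B^2=0$ or $h_B\circ Y=0$, which would require the stronger hypotheses. I would remark that if one additionally wants $h_B'{}^2=0$ one iterates with $h_B'' \coloneqq h_B'\circ\partial_B h_B' \circ h_B'$ (the Lambe–Stasheff correction) assuming also $Y\circ Z\circ Y=Y$, but that this is not needed for the stated claim. Finally I would note for later use (it feeds into the discussion of removing the obstruction \eqref{MasseywuhtDAA}) that with this $h_B'$ one has $Z\circ h_B' - h_A\circ Z = -h_A\circ Z$, so the obstruction class is represented purely by $h_A\circ Z$, which is $\partial_{BA}$-exact precisely when the left-side condition for $h_A$ can be implemented — connecting this proposition to the surrounding narrative.
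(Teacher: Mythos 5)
Your proof is correct and follows essentially the same route as the paper: your $h_B' = h_B - Y\circ Z\circ h_B = (\unit_B - Y\circ Z)\circ h_B$ is exactly the paper's $h_B' \coloneqq (\partial_B h_B)\circ h_B$, with the same verification that $\partial_B h_B' = \unit_B - Y\circ Z$ (via idempotence of $Y\circ Z$, which follows from $Z\circ Y\circ Z = Z$) and that $Z\circ h_B' = 0$. The converse is also the paper's argument, namely applying the differential on $\Hom(B,A)$ to $Z\circ h_B = 0$ to obtain $Z - Z\circ Y\circ Z = 0$.
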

\begin{proof}
  We show that such $h_B'$ always exists by defining (see \cite[Remark 2.3 (i)]{Crainic:2004})
  \begin{align}
      h_B' \coloneqq (\partial_B h_B) \circ h_B \, .
  \end{align}
  Then $h_B'$ satisfies $\unit_B - Y \circ Z =  \partial_B h_B'$ because of the Leibniz rule for $\partial_B$ and $Y \circ Z$ is idempotent because of $Z = Z \circ Y \circ Z$. 

  It follows that
  \begin{align}
      Z \circ h_B' =  Z \circ (\unit_B - Y \circ Z) \circ h_B = 0 \, .
  \end{align}

  Consider now the case where the homotopy $h_B$ satisfies Equation \eqref{MasseywuhtCA}. By acting with $\partial_A$ on Equation \eqref{MasseywuhtCA}, we obtain
  \begin{align}
      0 = \partial_{A} \left( Z \circ h_B \right) = Z \circ \partial_B h_B = Z - Z \circ Y \circ Z \ . 
  \end{align}
\end{proof}

\begin{remark}
    Note that an analogous result holds for the right side condition: if $Y = Y \circ Z \circ Y$, the given homotopy $h_B$ can always be modified in order to obtain the condition $h_B' \circ Y = 0$. In particular, one can always define the homotopy $h_B' =  h_B \circ \partial_B h_B$ which satisfies  $h_B' \circ Y = 0$ and defines a set of homotopy data together with the quasi-isomorphisms $Y,Z$ between the cochain complexes $(A,\de_A)$  and $(B,\de_B)$.
\end{remark}

\begin{prop} \label{prop:RightSideMarkl}
    Let $(A, \de_A),$ $(B, \de_B)$ be quasi-isomorphic cochain complexes with quasi-isomorphisms $Y \in \Hom(A,B),$ $Z \in \Hom(B,A)$ and homotopies $h_A \in \Hom_{-1}(A,A),$ $h_B \in \Hom_{-1}(B,B)$. Suppose that the right-side condition \eqref{MasseywuhtCA} holds. Then there always exists a new homotopy data still satisfying  \eqref{MasseywuhtCA} such that the obstruction \eqref{MasseywuhtDAA} is removed.
\end{prop}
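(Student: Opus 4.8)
The plan is to follow the strategy of Markl \cite{Markl:2001} for removing the obstruction, but to carry it out with a modification of $h_B$ that is supported on $\ker Z$ so that the right-side condition $Z \circ h_B = 0$ is preserved along the way. First I would recall from \cite{Markl:2001} that, given homotopy data, one modifies $h_B$ by $h_B' \coloneqq h_B + \partial_B \xi$ for a suitable $\xi \in \Hom_{-2}(B,B)$ (equivalently, one perturbs the pair $(Y,Z)$ together with $h_B$), and that the effect on the obstruction cocycle is
\begin{align}
Z \circ h_B' - h_A \circ Z = (Z \circ h_B - h_A \circ Z) + \partial_{BA}(Z \circ \xi) \, .
\end{align}
Since by hypothesis $Z \circ h_B = 0$, the obstruction class is already represented by $-\,h_A \circ Z$, and the task is to choose $\xi$ (or, symmetrically, a modification of $h_A$) making $h_A \circ Z$ itself $\partial_{BA}$-exact while keeping the modified $Z \circ h_B'$ equal to zero on the nose.

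The key steps, in order, would be: (i) use Proposition \ref{prop:sideZYZ} to record that the hypothesis $Z \circ h_B = 0$ forces $Z = Z \circ Y \circ Z$, so $P \coloneqq Y \circ Z$ is an idempotent on $B$ and $\unit_B - P = \partial_B h_B$; this splits $B$ as a complex into $\Im P \oplus \ker(\text{of the projector})$ up to the homotopy, and in particular $h_B$ already annihilates $\Im Y$ up to exact terms. (ii) Dualize: from $Z = Z\circ Y\circ Z$ one also gets that on $\Im Z \subseteq A$ the composite $Y \circ Z$ acts as a section, so one may use the Markl modification of $h_A$ alone — replace $h_A$ by $h_A' \coloneqq h_A \circ \partial_A h_A$ (as in the Remark following Proposition \ref{prop:sideZYZ}, adapted to $h_A$) — which still defines valid homotopy data with the \emph{same} $Y,Z$ and hence leaves $Z \circ h_B = 0$ untouched. (iii) Compute the new obstruction $Z \circ h_B - h_A' \circ Z$ and show it is $\partial_{BA}$-exact: since $h_A' = h_A \circ (\unit_A - Z\circ Y)$ one has $h_A' \circ Z = h_A \circ Z - h_A \circ Z \circ Y \circ Z = h_A \circ Z - h_A \circ Z = 0$ using $Z = Z\circ Y\circ Z$, so in fact the new obstruction cocycle is \emph{identically zero}, not merely exact. (iv) Verify that $(Y, Z, h_A', h_B)$ genuinely satisfies all the axioms of homotopy data (the side condition $Z\circ h_B=0$ is unchanged, $\unit_A - Z\circ Y = \partial_A h_A'$ by the Leibniz rule exactly as in the proof of Proposition \ref{prop:sideZYZ}), which certifies it as the desired new homotopy data.

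The main obstacle I anticipate is step (iii)–(iv): one must be careful that replacing $h_A$ by $h_A \circ \partial_A h_A$ does not secretly force extra relations or break the degree/sign bookkeeping in the $\Hom$-complexes, and one must check that the conclusion "$h_A' \circ Z = 0$" really does follow only from $Z = Z\circ Y\circ Z$ (which is guaranteed by the standing hypothesis via Proposition \ref{prop:sideZYZ}) and not from the stronger, unavailable identity $Y = Y\circ Z\circ Y$. If the naive choice $h_A' = h_A\circ\partial_A h_A$ turns out not to kill $h_A'\circ Z$ on the nose, the fallback is the genuinely cohomological argument of \cite{Markl:2001}: show $[h_A\circ Z] = 0$ in $\sfH(\Hom(B,A))$ by exhibiting a primitive built from $h_A$, $h_B$ and the contracting homotopies, and then absorb that primitive into a modification of $h_A$ supported away from $\Im Z$ so that $Z\circ h_B = 0$ survives. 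Either way the statement follows.
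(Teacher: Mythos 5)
Your argument is correct and follows the paper's own strategy: keep $Y$, $Z$, $h_B$ fixed (so $Z\circ h_B=0$ survives untouched), modify only $h_A$ by composing with $\partial_A h_A$, and use Proposition \ref{prop:sideZYZ} to extract $Z=Z\circ Y\circ Z$ from the hypothesis. The only genuine difference is the side on which you compose: the paper takes $h_A'=\partial_A h_A\circ h_A=h_A-Z\circ Y\circ h_A$, for which $h_A'\circ Z$ is not zero but only $\partial_{BA}$-exact (it equals $\partial_{BA}(h_A\circ h_A\circ Z)$, using $(\partial_A h_A)\circ Z=Z-Z\circ Y\circ Z=0$), whereas your choice $h_A'=h_A\circ\partial_A h_A=h_A-h_A\circ Z\circ Y$ kills $h_A'\circ Z$ on the nose, so the new obstruction cocycle $Z\circ h_B-h_A'\circ Z$ vanishes identically rather than merely being exact; as you anticipated, this uses only $Z\circ Y\circ Z=Z$ and never $Y\circ Z\circ Y=Y$. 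Your worry about steps (iii)--(iv) is unfounded: $\partial_A h_A'=(\partial_A h_A)^2=(\unit_A-Z\circ Y)^2=\unit_A-Z\circ Y$ because $Z\circ Y$ is idempotent on $A$, which again follows from $Z\circ Y\circ Z=Z$ (note it is the idempotency of $Z\circ Y$, not of $Y\circ Z$ as recorded in your step (i), that is needed here, though both follow from the same identity). Two small points worth making explicit in a final write-up: the other half of \eqref{MasseywuhtCA}, namely $Y\circ h_A'=0$, is also preserved, since $Y\circ h_A'=Y\circ h_A\circ\partial_A h_A=0$; and your variant has the pleasant byproduct that the left-side condition $h_A'\circ Z=0$ now holds as well, so with the same input you get a marginally cleaner conclusion than the paper's (zero cocycle instead of an exact one), and the fallback cohomological argument you sketch is never needed.
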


\begin{proof}
 The homotopy $h_A$ can be modified preserving the side condition \eqref{MasseywuhtCA} and removing the obstruction \eqref{MasseywuhtDAA}, i.e.,
\begin{align}\label{MasseywuhtDC}
    h_A \mapsto h_A' = h_A - Z \circ Y \circ h_A = \partial_A h_A \circ h_A \ .
    \end{align}
That gives
    \begin{align}
    Z \circ h_B - h_A' \circ Z =& - h_A' \circ Z = - h_A \circ Z + Z \circ Y \circ h_A \circ Z \\
    =& - \partial_A h_A \circ h_A \circ Z  \\
    =& - h_A \circ \partial_A h_A \circ Z - \partial_{BA} \left( h_A \circ h_A \circ Z \right) \\
    =& - \partial_{BA} \left( h_A \circ h_A \circ Z \right) \, ,
\end{align}
where we used $\left( \partial_{A} h_A \right) \circ Z = \partial_{BA} \left( h_A \circ Z \right) = 0$, as follows from Equation \eqref{MasseywuhtDA}. Note that the homotopy $h'_A$ satisfies Equation \eqref{HTWMSB} because of Proposition \ref{prop:sideZYZ}.
\end{proof}

\begin{remarkl}
    Note that our case of interest shows some peculiarities that do not arise in \cite{Markl:2001}. In particular, by modifying the homotopy $h_B$, the obstruction is removed, but the condition \eqref{MasseywuhtCA} would be lost. This can be seen by considering the redefinition
\begin{align}\label{MasseywuhtDB}
    h_B \mapsto h_B' = h_B - Y \circ Z \circ h_B + Y \circ h_A \circ Z = h_B + Y \circ h_A \circ Z \, ,
 \end{align}   
 which yields
    \begin{align}
     Z \circ h_B' = Z \circ Y \circ h_A \circ Z = h_A \circ Z - \partial_{BA} \left( h_A \circ h_A \circ Z \right) \neq 0 \, .
\end{align}
Note that this redefinition preserves the right-side condition if all the side conditions of Definition \ref{def:AppA} hold, but, in general, it is impossible to modify the homotopy $h_B$ while preserving the right-side condition \eqref{MasseywuhtCA} and removing the obstruction \eqref{MasseywuhtDAA} at the same time. This is due to the fact that even if the side condition \eqref{MasseywuhtCA} is achieved, the obstruction \eqref{MasseywuhtDAA} will be completely determined by the (eventual) cohomology class of $h_A \circ Z,$ which is non-trivial.
\end{remarkl}

Note that a crucial condition in order to redefine the homotopy $h_B$ is given by $Z = Z \circ Y \circ Z$. This condition may be characterised as follows:
 
\begin{lem} \label{lem:ImkerZ}
Let $A, B$ be homotopy data with quasi-isomorphisms $Y,Z$. Then
    \begin{align}
     Z - Z \circ Y \circ Z = 0  \Longrightarrow \Im(Z) \cap \ker(Y) = \{ 0 \}  \ ,
    \end{align}
    and, analogously, $Y - Y \circ Z \circ Y = 0 \Longrightarrow \Im(Y) \cap \ker(Z)=\{ 0\}.$ Moreover, if we assume $Y \circ Z = \left( Y \circ Z \right)^2$ and $Z \circ Y = \left( Z \circ Y \right)^2$ the converse holds as well.
\end{lem}
\begin{proof}
    $(\Rightarrow)$ : By assumption, we have that $Z \circ Y = \unit_A$ on $\Im (Z)$, and let $a \in \Im (Z) \cap \ker (Y)$. We thus have
    \begin{align}
        a = \unit_A a = \left( Z \circ Y \right) ( a ) = 0 \ \implies \ a = 0 \ .
    \end{align}
     The proof for the second part follows the same steps as the one presented above.
     
     $(\Leftarrow)$ : Since $Y \circ Z$ is a projector, it follows that $\left( Z - Z \circ Y \circ Z \right) \left( b \right) \in \ker(Y),$ for all $b \in B.$ On the other hand, $\left( Z - Z \circ Y \circ Z \right) \left( b \right) = \left( Z \circ \left( 1 - Y \circ Z \right) \right) \left( b \right) \in \Im(Z),$ for all $b \in B$. Since $\Im(Z) \cap \ker(Y) = \{ 0 \}$, the statement holds.
\end{proof}

Lastly, we observe that the module structure and the assumption $Z = Z \circ Y \circ Z$ simplify the homotopy transfer procedure as follows:

\begin{prop} \label{prop:closedproducts}
  Let $(A, \de_A)$ and $(B, \de_B)$ be homotopy data with module structure as in Definition \ref{def:homotopydatamodule}, endowed with the homotopy transfer $A_\infty$-algebra $(B, m_n^\hot)$ as in Subsection \ref{subsect:Homotopytransfer}. Suppose that $Z = Z \circ Y \circ Z.$ Then $\partial_B m_n^\hot = 0,$ for all $n \geq 1.$
\end{prop}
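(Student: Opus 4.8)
My plan is to argue by strong induction on $n$, extracting $\partial_B m_n^\hot$ from the $n$-th $A_\infty$-relation for $(B,m_n^\hot)$ and showing that, under the hypothesis $Z\circ Y\circ Z=Z$, the remaining terms cancel. The base cases are immediate and need no hypothesis: for $n=1$ the statement is $\partial_B\de_B=0$, and for $n=2$ the map $m_2^\hot=Y\circ\wedge\circ Z^{\otimes2}$ is a composite of the chain maps $Y,Z$ with $\wedge$, which satisfies the Leibniz rule, so $\de_B\circ m_2^\hot=m_2^\hot\circ\de_{B^{\otimes2}}$ and hence $\partial_B m_2^\hot=0$.

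The hypothesis enters through one structural observation that I would record first. Since $Z\circ Y\circ Z=Z$, the cochain map $Z\circ Y$ is idempotent (as in the proof of Lemma \ref{lem:ImkerZ}) and restricts to the identity on $\Im(Z)$; because $\Im(Z)$ is a subring of $A$, any element obtained as an iterated $\wedge$-product of elements of $\Im(Z)$ is fixed by $Z\circ Y$. Combining this with the fact that $Z$ is an $\Im(Z)$-bimodule homomorphism already gives $Z\circ m_2^\hot=\wedge\circ Z^{\otimes2}$, and more generally shows that, in any tree expression for $m_n^\hot$, the operator $Z\circ Y$ may be deleted whenever it is applied to a $\wedge$-product of $Z$-images.

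For the inductive step I would split the $A_\infty$-relation
\begin{equation*}
\sum_{r+s+t=n}(-1)^{r+st}\,m_{r+t+1}^\hot(\unit^{\otimes r}\otimes m_s^\hot\otimes\unit^{\otimes t})=0
\end{equation*}
into the terms with $s\in\{1,n\}$, which reassemble $\partial_B m_n^\hot$, and the terms with $2\le s\le n-1$, obtaining
\begin{equation*}
\partial_B m_n^\hot=-\sum_{\substack{r+s+t=n\\ 2\le s\le n-1}}(-1)^{r+st}\,m_{r+t+1}^\hot(\unit^{\otimes r}\otimes m_s^\hot\otimes\unit^{\otimes t}).
\end{equation*}
Here $2\le s\le n-1$ forces both $s$ and $r+t+1$ to be strictly less than $n$, so the inductive hypothesis is available for every product on the right. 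I would then rewrite each composite $m_{r+t+1}^\hot(\unit^{\otimes r}\otimes m_s^\hot\otimes\unit^{\otimes t})$ using that $Y$ and $Z$ are $\Im(Z)$-bimodule homomorphisms: the $Z$ that the outer product places on the slot occupied by the output of $m_s^\hot$ is carried through $Y$ and through $\wedge$ (becoming a $\rhd$ or a $\lhd$), producing the factor $Z\circ m_s^\hot$, which by the structural observation collapses to a $\wedge$-product valued in $\Im(Z)$. After doing this in every term, the whole sum becomes $Y$ applied to one and the same $\wedge$-expression, reassociated in all admissible ways with alternating signs, and associativity of $\wedge$ together with the associativity relations for $\rhd,\lhd$ in Definition \ref{def:homotopydatamodule} forces this alternating sum to vanish.

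The hard part will be the bookkeeping in this last step: one must check that, after carrying the outer $Z$'s inward, the surviving terms are genuine reassociations of a single expression — i.e.\ that no homotopy insertion is left in a position that prevents telescoping — and one must keep track of the signs $(-1)^{r+st}$ together with the Koszul signs produced by commuting $m_s^\hot$ past the first $r$ arguments. This is the same mechanism that, under the stronger left-side condition $h_A\circ Z=0$, makes all higher products vanish outright (Proposition \ref{prop:noAinftyalgebra}); note that $h_A\circ Z=0$ implies $Z\circ Y\circ Z=Z$, so there the conclusion is the degenerate case in which the higher $m_n^\hot$ are themselves zero, whereas under the weaker hypothesis $Z\circ Y\circ Z=Z$ only their $\partial_B$-boundaries vanish. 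An alternative route, which trades the combinatorics for a different subtlety, is to use Proposition \ref{prop:sideZYZ} (and its analogue on the $A$-side) to replace $h_A$ by the homotopy $h_A\circ\partial_A h_A$, which keeps $Y,Z$ fixed and satisfies $h_A\circ Z=0$, and then invoke Proposition \ref{prop:noAinftyalgebra}; the point to be verified in that approach is that such a replacement of $h_A$ leaves $\partial_B m_n^\hot$ unchanged.
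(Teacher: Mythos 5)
Your reduction of the claim, via the $A_\infty$-relations for $(B,m_n^\hot)$, to the vanishing of $\sum_{2\le s\le n-1}\pm\, m^\hot_{r+t+1}(\unit^{\otimes r}\otimes m_s^\hot\otimes\unit^{\otimes t})$ is fine, but the cancellation mechanism you propose for that sum does not work. The collapse of $Z\circ m_s^\hot$ to a $\wedge$-product of $Z$-images is valid only for $s=2$, where $Z\circ m_2^\hot=Z\circ Y\circ Z\circ\rhd\circ(Z\otimes\unit_B)=\wedge\circ Z^{\otimes 2}$. For $s\ge 3$ the product $m_s^\hot$ contains internal $h_A$-insertions: for instance $Z\circ m_3^\hot(b\otimes b'\otimes b'')$ produces terms of the form $(Z\circ Y\circ h_A)\left(Z(b)\wedge Z(b')\right)\wedge Z(b'')$, and $Z\circ Y\circ h_A$ does not simplify under the hypothesis $Z=Z\circ Y\circ Z$, because the argument $h_A\left(Z(b)\wedge Z(b')\right)$ need not lie in $\Im(Z)$. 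For the same reason the outer factors $m_{r+t+1}^\hot$ with $r+t+1\ge 3$ retain their own $h_A$'s, so after carrying the outer $Z$'s inward the surviving terms are \emph{not} reassociations of one and the same wedge expression, and the associativity telescoping you invoke is exactly the mechanism of Proposition \ref{prop:noAinftyalgebra}, which genuinely requires the left-side condition $h_A\circ Z=0$, strictly stronger than $Z\circ Y\circ Z=Z$. A symptom of the problem is that your sketch never actually uses the inductive hypothesis $\partial_B m_k^\hot=0$ for $k<n$, even though you declare it available. Your fallback route (replace $h_A$ by $h_A\circ\partial_A h_A$, which satisfies $h_A'\circ Z=0$, and quote Proposition \ref{prop:noAinftyalgebra}) has the gap you yourself flag: the transferred products $m_n^\hot$, $n\ge 3$, do change under this replacement (indeed the new ones vanish while the old ones need not), so the assertion that $\partial_B m_n^\hot$ is unchanged is essentially the statement to be proved and cannot be taken for granted.

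The paper's own proof enters the hypothesis at a different, and much more local, point. It writes the higher products recursively, $m_{n+1}^\hot=\sum_{s}\pm\, m_n^\hot\bigl(\unit_B^{\otimes s}\otimes(h_A\circ\wedge\circ Z^{\otimes 2})\otimes\unit_B^{\otimes(n-s-1)}\bigr)$, applies $\partial_B$ by the Leibniz rule, kills the terms containing $\partial_B m_n^\hot$ by the inductive hypothesis, and kills the remaining terms because the inserted piece is $\partial_A$-closed:
\begin{equation}
\partial_A\left(h_A\circ\wedge\circ Z^{\otimes 2}\right)=\partial_A\left(h_A\circ Z\circ\rhd\circ(Z\otimes\unit_B)\right)=\left(\unit_A-Z\circ Y\right)\circ Z\circ\rhd\circ(Z\otimes\unit_B)=0 ,
\end{equation}
which holds precisely when $Z\circ Y\circ Z=Z$. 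In other words, the hypothesis is used only to show that $h_A\circ\wedge\circ Z^{\otimes 2}$ has vanishing $\partial_A$-differential, not to collapse $Z\circ m_s^\hot$; if you want to salvage your approach along the $A_\infty$-relations, you would need an argument of this kind that tracks the internal $h_A$'s rather than eliminating them.
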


\begin{proof}
   The proof will be carried on by induction. Note that $\partial_B m_2^\hot= 0,$ which follows from the Leibniz rule for the associative dga $(A,\de_A, \wedge).$ Furthermore, the higher homotopy transferred products are given by the recursive formula
   \begin{align} \label{MAAA}
       m_{n+1}^\hot (\unit_B^{\otimes (n+1)}) \coloneqq \sum_{s=0}^{n-1} \pm m_n^\hot\bigl( \unit_B^{\otimes s} \otimes ( h_A \circ \wedge \circ Z^{\otimes 2}) \otimes \unit_B^{\otimes (n-s-1)}\bigr) \, , 
   \end{align}
   for $n \geq 2.$ Assume, by induction, that $\partial_B m_n^\hot=0.$ Thus, we have
   \begin{align}
       \partial_B m_{n+1}^\hot (\unit^{\otimes (n+1)}) = & \sum_{s=0}^{n-1} \pm (\partial_B m_n^\hot)\bigl( \unit_B^{\otimes s} \otimes (h_A \circ \wedge \circ Z^{\otimes 2} ) \otimes \unit_B^{\otimes (n-s-1)}\bigr) \\
       & \pm \sum_{s=0}^{n-1} \pm m_n^\hot\bigl( \unit_B^{\otimes s} \otimes \partial_A (h_A \circ \wedge \circ Z^{\otimes 2} ) \otimes \unit_B^{\otimes (n-s-1)}\bigr) \, , \label{MAA}
   \end{align}
   where the first term on the RHS of Equation \eqref{MAA} vanishes because of the induction hypothesis. The second term on the RHS of Equation \eqref{MAA} vanishes as well, as
   \begin{align}
       \partial_A (h_A \circ \wedge \circ Z^{\otimes 2}) = \partial_A (h_A \circ Z \circ \rhd \circ ( Z \otimes \unit_B ) ) = ( \unit_A - Z \circ Y ) \circ Z \circ \rhd \circ ( Z \otimes \unit_B ) = 0 \, ,
   \end{align}
   since $Z \circ Y \circ Z = Z$. Note that the signs in Equation \eqref{MAAA} do not play any role in the proof, since every summand on the RHS of Equation \eqref{MAA} vanishes separately.
\end{proof}

In the following section, we will discuss a novel type of homotopy transfer based on the module structure of Definition \ref{def:homotopydatamodule} involving only the homotopy $h_B$ on $B.$

\medskip

\subsection{Weakening the Side Conditions}~

In this Subsection, we discuss a weaker version of the side conditions, which we refer to as \emph{weak side conditions}. They will be used in the next section to construct $A_\infty$-algebras which can be introduced in a slightly broader regime than those where the SC hold.

\begin{defn}[\textbf{Weak Side Conditions}]\label{def:WSC}
    Let $A,B$ be quasi-isomorphic cochain complexes by quasi-isomorphisms $Y, Z$ together with the homotopy $h_B.$ This homotopy $h_B$ is said to satisfy the \emph{Weak Side Conditions (WSC)} if the following  hold:
    \begin{align}\label{WSCA}
        Y \circ Z \circ h_B = 0 \, , \qquad h_B \circ Y \circ Z = 0 \ .
    \end{align}
\end{defn}
In the previous paragraphs, we have characterised the side conditions in terms of properties of the maps $Y$ and $Z$ in Proposition \ref{prop:sideZYZ}; an analogous result holds here:
\begin{prop} \label{prop:sideZYZY}
 Let $A, B$ be homotopy data with quasi-isomorphisms $Y,Z$ and suppose that $\left( Z \circ Y \right)^2 = Z \circ Y$, i.e., $Z \circ Y$ is a projector. Then there always exists a homotopy operator $h_B'$ such that
 \begin{align}\label{WSCB}
     Y \circ Z \circ h'_B = 0 \, , \qquad h'_B \circ Y \circ Z = 0 \ .
 \end{align} 
  Conversely, let the homotopy $h_B$ satisfy the conditions \eqref{WSCA}, then $Z \circ Y = \left( Z \circ Y \right)^2$.
\end{prop}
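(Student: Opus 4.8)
The plan is to follow the same route as the proof of Proposition~\ref{prop:sideZYZ}, now modifying $h_B$ on \emph{both} sides. For the direct implication I would take
\begin{equation}
h_B' \coloneqq (\partial_B h_B)\circ h_B\circ(\partial_B h_B)=(\unit_B-Y\circ Z)\circ h_B\circ(\unit_B-Y\circ Z),
\end{equation}
again a degree $-1$ map. Since $\partial_B$ is a graded derivation with respect to composition and $\partial_B^2=0$ (so $\partial_B(\partial_B h_B)=0$), only the term in which $\partial_B$ hits the middle factor survives, which gives $\partial_B h_B'=(\partial_B h_B)^{\circ 3}=(\unit_B-Y\circ Z)^{\circ 3}$. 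This is the one place the hypothesis is used: $Y\circ Z$ is a projector, hence so is $\unit_B-Y\circ Z$, so $(\unit_B-Y\circ Z)^{\circ 3}=\unit_B-Y\circ Z$ and $h_B'$ still satisfies \eqref{HTWMSBA}, i.e., it forms homotopy data together with $Y,Z$. The weak side conditions \eqref{WSCB} then follow at once from idempotency of $Y\circ Z$:
\begin{align}
(Y\circ Z)\circ h_B' &= \bigl(Y\circ Z-(Y\circ Z)^{\circ 2}\bigr)\circ h_B\circ(\unit_B-Y\circ Z)=0,\\
h_B'\circ(Y\circ Z) &= (\unit_B-Y\circ Z)\circ h_B\circ\bigl(Y\circ Z-(Y\circ Z)^{\circ 2}\bigr)=0.
\end{align}

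For the converse I would argue exactly as in Proposition~\ref{prop:sideZYZ}. Because $Y$ and $Z$ are chain maps, $Y\circ Z$ is $\partial_B$-closed, and applying $\partial_B$ to the first identity of \eqref{WSCA} together with the Leibniz rule yields
\begin{align}
0=\partial_B\bigl((Y\circ Z)\circ h_B\bigr)&=(Y\circ Z)\circ(\partial_B h_B)\\
&=(Y\circ Z)\circ(\unit_B-Y\circ Z)=Y\circ Z-(Y\circ Z)^{\circ 2},
\end{align}
so $Y\circ Z$ is a projector; the second identity of \eqref{WSCA} gives the same conclusion through the identical computation, the sign $(-1)^{|h_B|}$ appearing harmlessly.

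The delicate step --- and essentially the only place the hypothesis is needed --- is checking that $h_B'$ closes up into homotopy data, i.e., that $\partial_B h_B'=\unit_B-Y\circ Z$ holds on the nose: this requires $Y\circ Z$ to be a genuine projector, not merely a projector up to homotopy. Everything else (the two side conditions and the converse) is a short formal manipulation; the only bookkeeping to watch is the signs in the graded Leibniz rule for $\partial_B$, which involve the degrees $|h_B|=|h_B'|=-1$ and $|Y\circ Z|=|\partial_B h_B|=0$ and affect none of the conclusions. One may also observe that the modification above is just the two-sided iterate of the single modification $h_B\mapsto(\partial_B h_B)\circ h_B$ used in Proposition~\ref{prop:sideZYZ} (cf.\ \cite{Crainic:2004}), so that no idea beyond applying it twice is required.
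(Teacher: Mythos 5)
Your proposal is correct and takes essentially the same route as the paper: the identical two-sided modification $h_B' \coloneqq (\partial_B h_B)\circ h_B\circ(\partial_B h_B)$, the same use of idempotency of $Y\circ Z$ to get the weak side conditions, and the same application of $\partial_B$ to the condition $Y\circ Z\circ h_B=0$ for the converse. The only difference is that you spell out the Leibniz-rule computation $\partial_B h_B'=(\partial_B h_B)^{\circ 3}$, which the paper leaves implicit.
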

\begin{proof}
  The construction of $h_B'$ follows the very same prescription given in \cite[Remark 2.3 (i)]{Crainic:2004}, for the side conditions:
  \begin{align}\label{WSCC}
      h_B' \coloneqq (\partial_B h_B) \circ h_B \circ ( \partial_B h_B) \, .
  \end{align}
  Then $h_B'$ satisfies $\unit_B - Y \circ Z =  \partial_B h_B'$ because of the Leibniz rule for $\partial_B$ and $Y \circ Z$ is idempotent by assumption. 

  It follows that
  \begin{align}\label{WSCD}
      Y \circ Z \circ h_B' & =  Y \circ Z \circ (\unit_B - Y \circ Z) \circ h_B (\unit_B - Y \circ Z) = 0 \, , \\
      h'_B \circ Y \circ Z & = (\unit_B - Y \circ Z) \circ h_B (\unit_B - Y \circ Z) \circ Y \circ Z = 0 \, .
  \end{align}

  Consider now the case where the homotopy $h_B$ satisfies the conditions \eqref{WSCA}. By acting with $\partial_B$ on either of the conditions \eqref{WSCA}, we obtain
  \begin{align}\label{WSCE}
      0 = \partial_{B} \left( Y \circ Z \circ h_B \right) = Y \circ Z \circ \partial_B h_B = Y \circ Z - Y \circ Z \circ Y \circ Z \ , 
  \end{align}
  thus $Y \circ Z$ is idempotent.
\end{proof}
\begin{remark}
    Note that the idempotency of $Y \circ Z$ allows to define a homotopy operator $h'_B$ that satisfies $(h_B')^2 = 0$ as well. In particular, by following the prescription given in \cite{Crainic:2004}, we can introduce
    \begin{align}\label{WSCF}
        h'_B \coloneqq h_B \circ \de_B \circ h_B \ .
    \end{align}
    If $h'_B$ satisfies the conditions \eqref{WSCA}, we have
    \begin{align}\label{WSCG}
        (h_B')^2 & = h_B \circ \de_B \circ h_B^2 \circ \de_B \circ h_B \\
        & = h_B^2 \circ \de_B \circ h_B - h_B^2 \circ \de_B \circ h_B \circ \de_B \circ h_B - h_B \circ Y \circ Z \circ h_B \circ \de_B \circ h_B \\
        & = h_B^2 \circ \de_B \circ h_B - h_B^2 \circ \de_B \circ h_B + h_B^3 \circ Y \circ Z \circ d_B \circ h_B + h_B^3 \circ \de_B^2 \circ h_B = 0 \ ,
    \end{align}
    where we used Equations \eqref{HTWMSBA} and \eqref{WSCA}. We will discuss in more depth the weak side conditions in \cite{Cremonini-future}.
    
    In the next section, the condition $(h_B')^2=0 $  will not be needed in order to construct an $A_\infty$-algebra out of the homotopy data with bimodule structure, thus we will not assume it.
\end{remark}

\section{\texorpdfstring{$A_\infty$}{A-infinity}-Algebras from Module Structures} \label{sect:HomotopyTransMod}

We present in this Section a new homotopy transfer construction that only relies on the homotopy and algebraic data of Definition \ref{def:homotopydatamodule}.
In particular, we
show how the homotopy data with module structure induces a product which is associative up to homotopy. This procedure presents some analogies with the homotopy transfer of an associative algebra. We shall discuss later on how these constructions are related.

\subsection{Homotopy Induced Products from Module Structures} \label{subsect:HMT}~

Starting from the homotopy data with bimodule structure, we can use the $\Im(Z)$-bimodule actions and the quasi-isomorphism $Z \colon A \to B$ to define a product on $B$ inspired by the construction given in \cite{Erler:2013xta, Cremonini:2019xco}. 

\begin{defn} [\textbf{Homotopy Module-Induced Product}]\label{def:HTWMSA}
Let $(A, \de_A)$ and $(B, \de_B)$ be quasi-isomorphic chain complexes endowed with the homotopy data with bimodule structure of Definition \ref{def:homotopydatamodule}. Then we define the  \emph{Homotopy Module-Induced Product (HMIP)}  
\begin{align}
	\nonumber m_2 \colon B^{\otimes 2} \longrightarrow  B
\end{align}
 by
\begin{align} \label{HTWMSG}
    m_2 ( b \otimes b' ) \coloneqq k_1  Z (b)  \rhd b' + k_2 \,  b \lhd Z (b') \ ,
\end{align}

for all $b, b' \in B$ and $k_1,k_2 \in \Bbbk$. We call this procedure \emph{Homotopy Module-Induction (HMI)}.
\end{defn}

The first thing we want to investigate is if there is any relation between the product \eqref{HTWMSG} and the product that one could transfer on $B$ via the usual homotopy transfer, i.e.,
\begin{equation}\label{HTWMSGA}
	m_2^{\hot} \left( b \otimes b' \right) \coloneqq Y \left(  Z (b)  \wedge Z (b')  \right) \, ,  
\end{equation}
for all $b,b' \in B$.

Let us consider the graded vector space $\Hom \left( B^{\otimes \bullet} , B \right)$ and equip it with the differential $\partial_B$ defined as in Equation \eqref{eqn:del}.

In order to discuss the equivalence up to homotopy of the Homotopy Module-Induced Product $m_2$ and the standard homotopy transfer $m_2^\hot,$ we define the map $\tilde{m}_2 \in \Hom \left( B^{\otimes 2} , B \right)$ as
\begin{align} \label{HTWMSGC}
    \tilde{m}_2 \left( b \otimes b' \right) \coloneqq k_1 \left( -1 \right)^{|b|} Z \left(  b \right) \rhd h_B (b') + k_2 \, h_B (b) \lhd Z(b') \, ,
\end{align}
for all $b, b' \in B.$
The map $\tilde{m}_2$ is the key to understanding how the Homotopy Module-Induced Product $m_2$ and the standard homotopy transfer product $m_2^\hot$ are equivalent up to homotopy. 

\begin{lem} \label{lemma:HTWMSG}
In the assumptions of Definition \ref{def:homotopydatamodule}, the Homotopy Module-Induced Product satisfies
\begin{align}\label{HTWMSGE}
	\partial_B \tilde{m}_2 = m_2 - (k_1+k_2) \, m_2^\hot \, , 
\end{align}
i.e., if $k_1 + k_2 \neq 0,$ the Homotopy Module-Induced Product $m_2$ and the (rescaled) homotopy transfer $m_2^\hot$ are equivalent up to homotopy.
\end{lem}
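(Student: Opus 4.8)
The plan is to compute $\partial_B \tilde{m}_2$ directly from the definition \eqref{eqn:del} and the structure equations of Definition \ref{def:homotopydatamodule}, and check that the result is precisely $m_2 - (k_1+k_2)\,m_2^\hot$. First I would write out $\partial_B \tilde m_2 = \de_B \circ \tilde m_2 - (-1)^{|\tilde m_2|}\,\tilde m_2 \circ \de_{B^{\otimes 2}}$; since $\tilde m_2$ has degree $-1$ (it carries one factor of $h_B$, which has degree $-1$, plus the degree-$0$ maps $Z$ and $\rhd/\lhd$), the sign in front of the second term is $+1$, and $\de_{B^{\otimes 2}} = \de_B \otimes \unit + \unit \otimes \de_B$. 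So, on a homogeneous pair $b \otimes b'$,
\begin{align}
(\partial_B \tilde m_2)(b \otimes b') = \de_B\bigl(\tilde m_2(b \otimes b')\bigr) + \tilde m_2(\de_B b \otimes b') + (-1)^{|b|}\tilde m_2(b \otimes \de_B b').
\end{align}

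Next I would substitute the two summands of $\tilde m_2$ from \eqref{HTWMSGC} and push $\de_B$ through using the dg-bimodule Leibniz rules \eqref{HTWMSE} together with the fact that $Z$ is a chain map ($\de_A \circ Z = Z \circ \de_B$, equivalently $Z$ commutes with the differentials). Treat the two terms separately. For the $k_1$-term $(-1)^{|b|} Z(b) \rhd h_B(b')$: applying $\de_B$ and the left Leibniz rule produces $(-1)^{|b|} Z(\de_B b) \rhd h_B(b') + Z(b) \rhd \de_B h_B(b')$ with appropriate signs, and the two "correction" terms coming from $\tilde m_2(\de_B b \otimes b')$ and $\tilde m_2(b \otimes \de_B b')$ are designed to cancel the $Z(\de_B b) \rhd h_B(b')$ piece and to convert $\de_B h_B(b')$ into $h_B \de_B(b')$; the combination $\de_B h_B + h_B \de_B = \unit_B - Y\circ Z$ from \eqref{HTWMSBA} then turns $Z(b) \rhd (\de_B h_B + h_B \de_B)(b')$ into $Z(b) \rhd b' - Z(b) \rhd (Y\circ Z)(b')$. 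By the bimodule-homomorphism property (\ref{it:ImZhomo}), $Z(b) \rhd (Y\circ Z)(b') = Y(Z(b) \wedge Z(b')) = m_2^\hot(b \otimes b')$. Thus the $k_1$-part contributes $k_1\bigl(Z(b)\rhd b' - m_2^\hot(b\otimes b')\bigr)$. An entirely symmetric computation with the right Leibniz rule handles the $k_2$-term $h_B(b)\lhd Z(b')$, giving $k_2\bigl(b \lhd Z(b') - m_2^\hot(b\otimes b')\bigr)$; here one uses $Y(Z(b)\wedge Z(b')) = (Y\circ Z(b))\lhd Z(b')$, the other half of (\ref{it:ImZhomo}). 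Adding the two contributions gives exactly $k_1 Z(b)\rhd b' + k_2\, b\lhd Z(b') - (k_1+k_2)m_2^\hot(b\otimes b') = m_2(b\otimes b') - (k_1+k_2)m_2^\hot(b\otimes b')$, as claimed; the final sentence of the lemma is then immediate, since $m_2 = (k_1+k_2)m_2^\hot + \partial_B \tilde m_2$ exhibits $m_2$ and $(k_1+k_2)^{-1}m_2$ as differing from $m_2^\hot$ by a $\partial_B$-exact term.

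The main obstacle is bookkeeping the Koszul signs correctly — in particular tracking the $(-1)^{|b|}$ in the definition of $\tilde m_2$, the sign $(-1)^{|\tilde m_2|}$ in $\partial_B$, and the signs in $\de_{B^{\otimes 2}}$, and verifying that they conspire so that the unwanted $Z(\de_B b)\rhd h_B(b')$ and $h_B(\de_B b)\lhd Z(b')$ cross-terms cancel and the surviving term is $Z(b)\rhd(\partial_B h_B)(b')$ rather than something with a spurious sign. A clean way to organize this is to note that the $k_1$-term of $\tilde m_2$ is literally $(\unit \otimes h_B)$ followed by $Z(-)\rhd(-)$ up to the stated sign, so that $\partial_B$ of it equals $Z(-)\rhd(-)$ applied to $\partial_B(\unit \otimes h_B) = \unit \otimes \partial_B h_B$ (using that $Z(-)\rhd(-)$ is a chain map by \eqref{HTWMSE}), which is exactly the Leibniz/chain-map statement; the $k_2$-term is handled the same way with $(h_B \otimes \unit)$. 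Once the sign conventions are pinned down this reduces to the two one-line identities above.
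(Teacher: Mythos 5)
Your proposal is correct and follows essentially the same route as the paper's own proof: apply $\partial_B$ to $\tilde m_2$, let the cross-terms cancel via the Leibniz rules so that only $Z(b)\rhd(\partial_B h_B)(b')$ and $(\partial_B h_B)(b)\lhd Z(b')$ survive, then use $\partial_B h_B=\unit_B-Y\circ Z$ together with the $\Im(Z)$-bimodule homomorphism property of $Y$ to identify $Z(b)\rhd(Y\circ Z)(b')=(Y\circ Z)(b)\lhd Z(b')=m_2^\hot(b\otimes b')$. Your sign bookkeeping (including the $(-1)^{|b|}$ in $\tilde m_2$ and the $+$ sign from $|\tilde m_2|=-1$ in $\partial_B$) is consistent with the paper's conventions, so nothing is missing.
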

\begin{proof} 
Let us consider the map $\tilde{m}_2$ given by Equation \eqref{HTWMSGC} and compute its differential by applying $\partial_B$. We obtain
\begin{align}\label{HTWMSGD}
    \partial_B \tilde{m}_2 \left( b \otimes b' \right) = m_2 \left( b \otimes b' \right) - k_1 Z \left(  b \right) \rhd  (Y \circ  Z) (b')  - k_2 \left( Y \circ  Z \right) \left( b \right) \lhd Z \left( b' \right) \, ,
\end{align}
for all $b, b' \in B.$
Then Equation \eqref{HTWMSGE} follows straightforwardly from Equation \eqref{HTWMSGD} by using that $Y$ is an $\Im(Z)$-module homomorphism.
\end{proof}

\begin{remark} \label{rmk:HTWMSG} 
If $k_1 + k_2 = 0$ it simply follows that the product $m_2$ is exact. This in particular means that if we restrict on the cohomology $\sfH \left( B \right)$ the product is identically zero; in other words, the restriction to cohomology trivialises the ring structure.
\end{remark}

\subsection{Homotopy Module-Induction and (Higher) Associativity}\label{subsect:HMTA}~

The assumptions of Definition \ref{def:homotopydatamodule} represent the cornerstone of the Homotopy Module-Induction and we shall see that they are crucial to the associativity up to homotopy of the induced product $m_2$. To this, recall that the \emph{Associator} for any product $m_2 \in \Hom(B^{\otimes 2}, B)$ is a trilinear map $\Ass \in \Hom(B^{\otimes 3}, B)$ given by
\begin{align}
    \Ass(b \otimes b' \otimes b'') \coloneqq m_2 \left( m_2 \left( b \otimes b' \right) \otimes b'' \right) - m_2 \left( b \otimes m_2 \left( b' \otimes b'' \right) \right) \, ,
\end{align}
for all $b, b', b'' \in B.$

\begin{prop} \label{prop:AssModule}
Let $(A, \de_A), (B, \de_B)$ be homotopy data with module structure as in Definition \ref{def:homotopydatamodule}. Then the Homotopy Module-Induced Product $m_2 \in \Hom(B^{\otimes 2}, B)$ is associative up to homotopy, i.e. there exists a map $m_3 \in \Hom(B^{\otimes 3}, B)$ such that 
\begin{align} \label{HTWMSGAC}
  \Ass = \partial_B \, m_3 \, ,  
\end{align}
where $\partial_B$ is defined by Equation \eqref{eqn:del}.
\end{prop}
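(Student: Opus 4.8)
The plan is to compute the associator $\Ass$ of $m_2$ explicitly on homogeneous elements, substitute the expansion $m_2 = (k_1+k_2) m_2^\hot + \partial_B \tilde{m}_2$ from Lemma \ref{lemma:HTWMSG}, and collect terms so that the right-hand side visibly takes the form $\partial_B m_3$ for an explicit $m_3$. The associativity of $m_2^\hot$ is already controlled: by the discussion in Subsection \ref{subsect:Homotopytransfer}, $m_2^\hot(m_2^\hot \otimes \unit) - m_2^\hot(\unit \otimes m_2^\hot) = \partial_B m_3^\hot$ with $m_3^\hot$ given by Equation \eqref{eqn:m3hot}. So the essential content is to show that the \emph{remaining} pieces of $\Ass$ — the mixed terms involving one factor of $\partial_B \tilde{m}_2$ and one of $m_2^\hot$ (or $\tilde m_2$) — also organise into a $\partial_B$-exact term. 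The bilinearity of $\partial_B$ and the Leibniz-type identity $\partial_B(f \circ (g \otimes \unit)) = (\partial_B f)\circ(g\otimes\unit) \pm f \circ ((\partial_B g)\otimes \unit)$ for composable multilinear maps will be the main computational tool here.

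Concretely, I would first write $\Ass(b\otimes b'\otimes b'')$ directly from Equation \eqref{HTWMSG}, using the module axioms \eqref{HTWMSF}. The key observation is that the ``pure'' terms — $k_1^2 (Z(b)\wedge Z(b'))\rhd b''$ coming from the left-left composition and $k_2^2\, b\lhd(Z(b')\wedge Z(b''))$ from the right-right composition — are genuinely associative and cancel in the associator, because $Z$ is an $\Im(Z)$-bimodule homomorphism so $Z(b)\wedge Z(b') = Z(Z(b)\rhd b')$ and associativity of $\wedge$ on $A$ propagates. What does \emph{not} cancel are the ``mixed'' left-right terms of the form $k_1 k_2\, Z(b)\rhd(b'\lhd Z(b''))$ versus $k_1 k_2\,(Z(b)\rhd b')\lhd Z(b'')$, together with terms where an inner $m_2$-output gets acted on; these differ by elements of $\ker Z$ (as noted in the Remark after Definition \ref{def:homotopydatamodule}). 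The strategy is to rewrite each such obstruction term using Equations \eqref{HTWMSBA} and the identity $\partial_B h_B = \unit_B - Y\circ Z$: whenever $Z$ of an inner product appears, one can insert $Y\circ Z + \partial_B h_B$ and peel off a $\partial_B$ of something involving $h_B$, while the $Y\circ Z$-part recombines with the $m_2^\hot$-associator.

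Guided by the paper's own later formulas — Lemma \ref{lemma:HTWMSG} gives $\tilde m_2$, and Theorems \ref{thm:Ainftyside} and \ref{thm:weaksideorcoefficients} display $m_3$ in the form $\tilde m_2 \circ_Z m_2 + m_2 \circ_{h_B}\tilde m_2$ (schematically $(m_2\circ m_2)\circ h_B$ up to signs) — I would \emph{guess the answer first}: set
\begin{align}
m_3 \coloneqq (m_2 \circ m_2)\circ_{fb} h_B + (\text{rescaled } m_3^\hot\text{-type correction}),
\end{align}
i.e. take $m_3(b\otimes b'\otimes b'')$ built from $m_2$ applied after an insertion of $h_B$ on one slot, plus $(k_1+k_2)^2\,m_3^\hot$ so that the homotopy-transfer part is accounted for. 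Then verify $\partial_B m_3 = \Ass$ by a direct (if tedious) expansion using the Leibniz rule for $\partial_B$, the module-compatibility Equations \eqref{HTWMSE} (which say $\rhd,\lhd$ are chain maps in each argument, so differentials pass through cleanly), and the homomorphism property of $Z$.

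\textbf{Main obstacle.} The genuinely delicate point is the bookkeeping of signs, since $m_2$ has degree $0$ but $\tilde m_2$ and $h_B$ have degree $-1$, so every commutation of $\partial_B$ past a factor of $h_B$ or $\tilde m_2$ flips a sign, and the Koszul signs in $\de_{B^{\otimes n}} = \sum \unit^{\otimes s}\otimes \de_B\otimes\unit^{\otimes t}$ must be tracked through three-fold tensor expressions. A secondary difficulty is choosing the precise $m_3$ so that \emph{no} side condition is needed — the proposition claims associativity-up-to-homotopy unconditionally — which means the candidate $m_3$ must absorb the $\ker Z$-discrepancy between $Z(b)\rhd b'$ and $b\lhd Z(b')$ without invoking $Z\circ h_B=0$ or $h_B\circ Y=0$; concretely this forces the symmetric choice in \eqref{HTWMSGC} (the $(-1)^{|b|}$ weighting of the left term) rather than a naive one, and getting that weighting right is where a first attempt is most likely to go wrong. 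I expect the verification to go through precisely because $\partial_B$ is a derivation of compositions and $m_2^\hot = \frac{1}{k_1+k_2}(m_2 - \partial_B\tilde m_2)$ is already known to be associative up to $\partial_B m_3^\hot$; everything else is linear algebra over $\Bbbk$ with careful signs.
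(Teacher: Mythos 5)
Your core plan is essentially the paper's proof: expand $\Ass$ from Equation \eqref{HTWMSG} using the module axioms and the $\Im(Z)$-homomorphism property of $Z$, define $m_3$ by inserting $h_B$ on the free input of each surviving term, and verify $\partial_B m_3=\Ass$; the reason no side condition is needed is that the two $Y\circ Z$-contributions created by $\partial_B h_B=\unit_B-Y\circ Z$ are both equal to $k_1k_2\,Y\bigl(Z(b)\wedge Z(b')\wedge Z(b'')\bigr)$ and cancel against each other. One local error in your bookkeeping: the mixed terms $k_1k_2\,(Z(b)\rhd b')\lhd Z(b'')$ and $k_1k_2\,Z(b)\rhd(b'\lhd Z(b''))$ do \emph{not} survive --- they coincide by the bimodule compatibility of the two actions and cancel exactly; the genuinely surviving terms are $k_1k_2\,(Z(b)\wedge Z(b'))\rhd b''$ and $-k_1k_2\,b\lhd(Z(b')\wedge Z(b''))$, i.e.\ the paper's Equation \eqref{HTWMSO}, which arise when $Z$ hits an inner $m_2$-output sitting inside the opposite-handed outer action. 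Your planned full expansion would catch this, and your main candidate $(m_2\circ m_2)\circ_{fb}h_B$ is, up to the Koszul signs of the insertion prescription, exactly the paper's $m_3$ of Equation \eqref{HTWMSN}. The extra $(k_1+k_2)^2 m_3^\hot$ summand is harmless but does no work: under Definition \ref{def:homotopydatamodule} the homomorphism property gives $Z\circ m_2^\hot(b\otimes b')=Z(b)\wedge(Z\circ Y\circ Z)(b')=(Z\circ Y\circ Z)(b)\wedge Z(b')$, so $m_2^\hot$ is strictly associative and $\partial_B m_3^\hot=0$; it is precisely the kind of $\partial_B$-closed addition the paper remarks one may always make.

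Your decomposition idea, however, is worth singling out as a genuinely different and shorter proof of bare existence: writing $m_2=(k_1+k_2)\,m_2^\hot+\partial_B\tilde m_2$ (Lemma \ref{lemma:HTWMSG}), every term of $\Ass$ is manifestly exact --- the pure $m_2^\hot$ part equals $(k_1+k_2)^2\,\partial_B m_3^\hot$ by the homotopy-transfer relation, and every term containing a factor $\partial_B\tilde m_2$ is exact because $m_2^\hot$ and $\partial_B\tilde m_2$ are $\partial_B$-closed, e.g.\ $m_2^\hot\circ\bigl((\partial_B\tilde m_2)\otimes\unit_B\bigr)=\pm\,\partial_B\bigl(m_2^\hot\circ(\tilde m_2\otimes\unit_B)\bigr)$. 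This route avoids the module-axiom computation and the sign chasing you worry about, at the cost of producing an $m_3$ that is less explicit than \eqref{HTWMSN}, which the paper needs later for the pentagonator; the two choices of $m_3$ differ by a $\partial_B$-closed term, which is all the proposition requires.
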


\begin{proof}
The associator for the Homotopy Module-Induced Product $m_2$ in the assumptions of Definition \ref{def:homotopydatamodule} is given by
\begin{align} \label{HTWMSO}
    \Ass(b \otimes b' \otimes b'') = k_1 k_2 \left( Z (b) \wedge Z (b') \right) \rhd b'' - k_1 k_2 \, b \lhd \left( Z (b') \wedge Z (b'') \right) \, ,
\end{align}
for any $b,b',b'' \in B$.

In order to construct a map $m_3 \in \Hom(B^{\otimes 3}, B)$ that satisfies Equation \eqref{HTWMSGAC}, we use the following prescription. 
We obtain the 3-product $m_3$ by inserting the homotopy $h_B$ on the free external branches of $\Ass,$ i.e. the external branches that do not have any input in $Z,$ such as the entries $b''$ in the first term and $b$ in the second term of Equation \eqref{HTWMSO}, respectively.   
We will discuss this prescription in its full generality later on. Hence we have
\begin{align}
    m_3 \left( b \otimes b' \otimes b'' \right) = & k_1 k_2 \left( -1 \right)^{|b| + |b'|} \left( Z (b) \wedge Z (b') \right) \rhd h_B (b'') \label{HTWMSN}  \\ 
    & - k_1 k_2 h_B (b) \lhd \left( Z (b') \wedge Z (b'') \right) \, ,
\end{align}
for all $b,b',b'' \in B$. Note that $m_3$ given by Equation \eqref{HTWMSN} is not unique, since one can always add a closed term with respect to $\partial_B.$ Its differential becomes
\begin{align} \label{HTWMSNA}
      \partial_B m_3 (b \otimes b' \otimes b'') = k_1 k_2 \left( Z (b) \wedge Z (b') \right) \rhd b'' - k_1 k_2 b \lhd \left( Z (b') \wedge Z (b'') \right) \, ,
\end{align}
for all $b, b', b'' \in B.$
 Therefore, by comparing Equations \eqref{HTWMSNA} and \eqref{HTWMSO} we see that Equation \eqref{HTWMSGAC} is verified.
\end{proof}

\begin{remark}
We mentioned above that the product $m_3$ is constructed by suitably inserting the homotopy $h_B$ on a free branch of each term of $\Ass.$ Let us discuss what happens when a different choice, i.e. a different insertion, is made. For instance, consider
\begin{align}
  m_3^{(1, cl)} \left( b \otimes b' \otimes b'' \right) \coloneqq  \left( (Z \circ h_B) (b) \wedge Z (b') 
- (-1)^{|b|} Z (b) \wedge \left( Z \circ h_B \right) (b') \right) \rhd b'' \, .  
\end{align}This term is $\partial_B$-closed, because of the compatibility with the maps $Y$ and $Z$ with the $\Im (Z)$-module action.
In this sense, we follow the prescription used in the proof and we will not consider contributions to $m_3$ given by closed terms.
\end{remark}

\begin{remark}
Let us briefly comment on the relevance of the Property \ref{it:ImZhomo} in Definition \ref{def:homotopydatamodule}. Let us drop assumption \ref{it:ImZhomo}, then the associator of $m_2$ becomes
\begin{align}
    \Ass(b \otimes b' \otimes b'') = &  
	   k_1^2 Z \left( Z \left(  b \right) \rhd b' \right) \rhd b'' + k_1 k_2 Z \left( b \lhd Z \left(  b' \right) \right) \rhd b''\\
        & + k_2^2 b \lhd \left(Z( b') \wedge Z (b'') \right) - k_1^2 \left( Z (b) \wedge Z (b') \right) \rhd b''  \label{HTWMSH} \\
     & - k_1 k_2 b \lhd Z \left( Z \left( b' \right) \rhd b'' \right)  - k_2^2 b \lhd Z \left( b' \lhd Z \left( b'' \right) \right)  \ ,
\end{align}
for all $b, b', b'' \in B.$

By following the same argument as in the proof of Proposition \ref{prop:AssModule}, we define a map $m_3$ by
\begin{align}
    m_3 (b \otimes b' \otimes b'') = & (-1)^{|b|+|b'|} k_1^2 Z \left( Z \left(  b \right) \rhd b' \right) \rhd h_B (b'') \\
    &+ (-1)^{|b|+|b'|} k_1 k_2 Z \left( b \lhd Z \left(  b' \right) \right) \rhd h_B (b'') \label{HTWMSK} \\
        & + k_2^2 h_B (b) \lhd \left(Z( b') \wedge Z (b'') \right) - (-1)^{|b|+|b'|} k_1^2 \left( Z (b) \wedge Z (b') \right) \rhd h_B (b'')   \\
     & - k_1 k_2 h_B (b) \lhd Z \left( Z \left( b' \right) \rhd b'' \right)  - k_2^2 h_B (b) \lhd Z \left( b' \lhd Z \left( b'' \right) \right)  \ ,
\end{align}
for all $b,b',b'' \in B.$  
 
By applying the differential $\partial_B$ to Equation \eqref{HTWMSK}, we obtain
\begin{align}
    \partial_B m_3 \left( b \otimes b' \otimes b'' \right) =& m_2 \left( m_2 \left( b \otimes b' \right) \otimes b'' \right) - m_2 \left( b \otimes m_2 \left( b' \otimes b'' \right) \right)  \\
    & - k_1^2 Z \left( Z \left(  b \right) \rhd b' \right) \rhd (Y \circ Z) (b'') - k_1 k_2 Z \left( b \lhd Z \left(  b' \right) \right) \rhd (Y \circ Z) (b'') \\
    & - k_2^2 (Y \circ Z) (b) \lhd \left(Z( b') \wedge Z (b'') \right) + k_1^2 \left( Z (b) \wedge Z (b') \right) \rhd (Y \circ Z) (b'')  \label{HTWMSL} \\
     & + k_1 k_2 (Y \circ Z) (b) \lhd Z \left( Z \left( b' \right) \rhd b'' \right)  + k_2^2 (Y \circ Z) (b) \lhd Z \left( b' \lhd Z \left( b'' \right) \right)  \ ,
\end{align}
that is, $\partial_B m_3$ is given by the associator of $m_2$ plus six extra terms. It can be verified that it is not possible to modify the definition of the 3-product with other terms constructed with different insertions of the homotopy $h_B$ (or the homotopy $h_A$) that compensate for these extra terms. This means that, in general, an induced product defined as in Equation \eqref{HTWMSG} will not be associative up to homotopy by constructing the map $m_3$ with the prescription discussed in the proof of Proposition \ref{prop:AssModule}. 
\end{remark}

We can now proceed with the construction of the first higher associator.
In particular, we study the \emph{Pentagonator}, denoted by $\mathsf{Pen},$ of the first two products coming from the Homotopy Module-Induction, which is defined by
\begin{align}
 \mathsf{Pen}(b \otimes b' \otimes b'' \otimes b''') \coloneqq & 
    m_2 \left( m_3 \left( b \otimes b' \otimes b'' \right) \otimes  b''' \right) + \left( -1 \right)^{|b|} m_2 \left( b \otimes m_3 \left( b' \otimes b'' \otimes b''' \right) \right)  \\
	\nonumber & - m_3 \left( m_2 \left( b \otimes b' \right) \otimes b'' \otimes b''' \right) + m_3 \left( b \otimes m_2 \left( b' \otimes b'' \right) \otimes b''' \right) \\
 &- m_3 \left( b \otimes b' \otimes m_2 \left( b'' \otimes b''' \right) \right) \, ,
\end{align}
for all $b,b',b'',b'''\in B.$ 
By using Equations \eqref{HTWMSG}, \eqref{HTWMSN} and Definition \ref{def:homotopydatamodule}, we obtain
\begin{align}
	\label{HTWMSQ} \Pen \left( b \otimes b' \otimes b'' \otimes b''' \right) = & k_1 k_2^2 \left( -1 \right)^{|b| + |b'|} \left( Z (b) \wedge Z (b') \right) \rhd h_B ( b'') \lhd Z (b''')  \\ 
    &- k_1 k_2^2 \, h_B ( b ) \lhd \left( Z (b') \wedge Z (b'') \wedge Z (b''') \right) \\
    & + k_1^2 k_2 \left( -1 \right)^{|b|+|b'|+|b''|} \left( Z (b) \wedge Z (b') \wedge Z (b'') \right) \rhd h_B (b''') \\
    & - k_1^2 k_2 \left( -1 \right)^{|b|} Z (b) \rhd h_B (b') \lhd \left( Z (b'') \wedge Z (b''') \right) \\
    & + k_1^2 k_2 \left( -1 \right)^{|b|+|b'|} \left( Z (b) \wedge Z (b') \wedge  \left(Z \circ h_B  \right) (b'') \right) \rhd b''' \\
    &- k_1^2 k_2 \left( (Z \circ  h_B) (b)  \wedge Z (b') \wedge Z (b'') \right) \rhd b''' \\
    & + k_1 k_2^2 \left( -1 \right)^{|b| + |b'|+|b''|} b \lhd \left( Z (b') \wedge Z (b'') \wedge (Z \circ h_B) (b''') \right) \\
    \nonumber 
    & - k_1 k_2^2 \left( -1 \right)^{|b|} b \lhd \left( (Z \circ h_B) (b') \wedge Z (b'') \wedge Z (b''') \right) \\
	 & + k_1^2 k_2 \, h_B \left( Z (b) \rhd b' \right) \lhd \left( Z (b'') \wedge Z (b''') \right) \\ 
    &  + k_1 k_2^2 \, h_B \left( b \lhd Z (b') \right) \lhd \left( Z (b'') \wedge Z (b''') \right) \\
    & - k_1^2 k_2 \left( -1 \right)^{|b|+|b'|} \left( Z (b) \wedge Z (b') \right) \rhd h_B \left( Z (b'') \rhd b''' \right) \\
    &- k_1 k_2^2 \left( -1 \right)^{|b|+|b'|} \left( Z (b) \wedge Z (b') \right) \rhd h_B \left( b'' \lhd Z (b''') \right) \ ,
\end{align}
for all $b,b',b'',b''' \in B.$
The product $m_4 \in \Hom \left( B^{ \otimes 4} , B \right)$ can be constructed, out of the expression of the pentagonator, by following the same prescription discussed in the proof of Proposition \ref{prop:AssModule}: whenever there is a \virgolette naked'' external branch, i.e., an external branch without insertions of $Z$ or $h_B$, apply the homotopy operator $h_B$\footnote{Of course, one should also fix the sign depending on which inputs and how many homotopy operators have been crossed when applying $h_B$.}. If a term presents no naked branches, it will be neglected. In particular, out of the twelve terms of Equation \eqref{HTWMSQ}, we can construct the product $m_4$ as
\begin{align}
	\label{HTWMSR} m_4 \left( b \otimes b'\otimes b'' \otimes b''' \right) \coloneqq & - (-1)^{|b''|} k_1^2 k_2 \left( Z (b) \wedge Z (b') \wedge  \left(Z \circ h_B  \right) (b'') \right) \rhd h_B (b''') \\
    & + (-1)^{|b|+|b'|+|b''|} k_1^2 k_2 \left( (Z \circ  h_B) (b)  \wedge Z (b') \wedge Z (b'') \right) \rhd h_B (b''') \\
    & + k_1 k_2^2 \left( -1 \right)^{|b| + |b'|+|b''|} h_B (b) \lhd \left( Z (b') \wedge Z (b'') \wedge (Z \circ h_B) (b''') \right) \\
    \nonumber 
    & - k_1 k_2^2 \left( -1 \right)^{|b|} h_B (b) \lhd \left( (Z \circ h_B) (b') \wedge Z (b'') \wedge Z (b''') \right) \\
	 & - k_1^2 k_2 \, h_B \left( Z (b) \rhd h_B (b') \right) \lhd \left( Z (b'') \wedge Z (b''') \right) \\ 
    &  - k_1 k_2^2 \, h_B \left( h_B (b) \lhd Z (b') \right) \lhd \left( Z (b'') \wedge Z (b''') \right) \\
    & + k_1^2 k_2 \left( -1 \right)^{|b''|} \left( Z (b) \wedge Z (b') \right) \rhd h_B \left( Z (b'') \rhd h_B (b''') \right) \\
    & + k_1 k_2^2 \left( -1 \right)^{|b|+|b'|} \left( Z (b) \wedge Z (b') \right) \rhd h_B \left( h_B (b'') \lhd Z (b''') \right) \, ,
\end{align}
for all $b,b',b'',b''' \in B,$ where only eight terms are involved in the construction.

In order to have the map $m_4$ satisfying the higher associativity relation for $\Pen,$ we can either constrain the homotopy data or fix a relation between the coefficients $k_1$ and $k_2$ appearing in the product $m_2,$ Equation \eqref{HTWMSG}. This can be stated in the following
\begin{lem} \label{lem:Penm4}
The product $m_4$ satisfies the equation
\begin{align} \label{HTWMSU}
    \mathsf{Pen} = \partial_B m_4
\end{align}
if either of the following conditions hold
\begin{enumerate}
    \item $k_1 \vee k_2 = 0$; in this case, the map $m_2$ becomes associative, the maps $m_3$ and $m_4$ are zero, i.e. Equation \eqref{HTWMSU} is trivially   satisfied; \label{it:Pen1}
    \item $k_1 = - k_2$; in this case, the non-associative map $m_2$ becomes $\partial_B$-exact, as it is shown in Equation \eqref{HTWMSGE}; \label{it:Pen2}
    \item the homotopy $h_B$ satisfies the weak side conditions, see Definition \ref{def:WSC}. \label{it:Pen3}
\end{enumerate}
\end{lem}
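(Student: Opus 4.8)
The plan is to establish \eqref{HTWMSU} by explicit computation, treating the three cases separately.

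Case \ref{it:Pen1} is immediate: by \eqref{HTWMSO} the associator $\Ass$ is proportional to $k_1 k_2$, hence vanishes, so $m_2$ is strictly associative; likewise \eqref{HTWMSN} and \eqref{HTWMSR} show that every summand of $m_3$ and of $m_4$ carries a factor $k_1 k_2$ (indeed $k_1^2 k_2$ or $k_1 k_2^2$), so $m_3 = m_4 = 0$, and since each of the five summands defining $\Pen$ contains exactly one factor $m_3$, we get $\Pen = 0 = \partial_B m_4$.

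For cases \ref{it:Pen2} and \ref{it:Pen3} the idea is to compute $\partial_B m_4$ term by term from the eight summands of \eqref{HTWMSR} via the graded Leibniz rule for $\partial_B$, and compare with \eqref{HTWMSQ}. The ingredients are those of Definition \ref{def:homotopydatamodule}: $Y$ and $Z$ are chain maps, $\wedge$ obeys the graded Leibniz rule, the actions $\rhd,\lhd$ obey the Leibniz rules \eqref{HTWMSE} and are associative \eqref{HTWMSF}, together with $\partial_B h_B = \unit_B - Y\circ Z$ from \eqref{HTWMSBA}. Differentiating a homotopy $h_B(x)$ that was inserted on a naked branch produces $x - (Y\circ Z)(x)$ up to a term $h_B(\de_B x)$ which cancels against the matching summand of $m_4\circ\de_{B^{\otimes 4}}$; differentiating a factor $(Z\circ h_B)(x)$ produces $Z(x) - (Z\circ Y\circ Z)(x)$ up to a term $(Z\circ h_B)(\de_B x)$ that cancels in the same way. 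The $\unit_B$- and $Z$-contributions reassemble all twelve summands of $\Pen$: the four summands of \eqref{HTWMSQ} with no naked branch come from differentiating the outermost homotopy in the four $m_4$-terms of the shape $h_B(\cdots)\lhd(\cdots)$ or $(\cdots)\rhd h_B(\cdots)$, while the eight summands with a naked branch come from differentiating the homotopy that \eqref{HTWMSR} inserted on it. What is left over is a collection of \emph{defect} terms, each carrying a single occurrence of $Y\circ Z$ or of $Z\circ Y\circ Z$ acting on one input. Exactly as in the proof of Proposition \ref{prop:AssModule}, Property \ref{it:ImZhomo} of Definition \ref{def:homotopydatamodule} — which lets one slide a $Z$ through the bimodule actions, e.g. $(Z(b)\wedge Z(b'))\rhd (Y\circ Z)(b'') = Y(Z(b)\wedge Z(b')\wedge Z(b'')) = (Y\circ Z)(b)\lhd(Z(b')\wedge Z(b''))$ — forces every defect term not involving the composites $Y\circ Z\circ h_B$ or $h_B\circ Y\circ Z$ to cancel in mirror pairs. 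The residue is a sum of terms, each proportional to $k_1^2 k_2$ or to $k_1 k_2^2$ and each containing one of $Y\circ Z\circ h_B$, $h_B\circ Y\circ Z$. Under the weak side conditions \eqref{WSCA} every such term vanishes identically, which is case \ref{it:Pen3}. Under $k_1 = -k_2$ the surviving terms pair up into expressions proportional to $k_1^2 k_2 + k_1 k_2^2 = k_1 k_2(k_1+k_2) = 0$; here one uses in addition the identities special to this case, namely $Z\circ m_2 = (k_1 + k_2)\,\wedge\circ(Z\otimes Z) = 0$ and $m_2 = \partial_B\tilde m_2$ of Lemma \ref{lemma:HTWMSG}, which fix the relative signs so that the pairing works. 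This is case \ref{it:Pen2}. In all cases $\partial_B m_4 = \Pen$, which is \eqref{HTWMSU}.

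The routine but delicate part, and the real obstacle, is the Koszul-sign bookkeeping: tracking the signs picked up as the degree $-1$ map $h_B$ is commuted past the various inputs in each of the eight terms of \eqref{HTWMSR}, and then checking that the mirror pairs of defect terms occur with the precise sign pattern that lets Property \ref{it:ImZhomo} cancel the bulk of them and lets $k_1 k_2(k_1+k_2)=0$ cancel the rest. I would organise this by writing $\partial_B m_4 - \Pen$ as a single expression that is bilinear in $(\unit_B,\,Y\circ Z)$ on the inputs and verifying that it vanishes summand by summand under each hypothesis.
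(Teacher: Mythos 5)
Your proposal is correct and follows essentially the same route as the paper: a direct computation of $\partial_B m_4$ from \eqref{HTWMSR}, yielding $\partial_B m_4 = \Pen + \mathfrak{R}_Y$ where, after sliding $Z$ through the bimodule actions via Property \ref{it:ImZhomo}, every remainder term carries the overall factor $k_1 k_2 (k_1+k_2)$ and contains $Y \circ Z \circ h_B$ or $h_B \circ Y \circ Z$, so it dies under any of the three hypotheses (see Equations \eqref{HTWMSS}--\eqref{HTWMST}). The only superfluous element is your appeal, in case \ref{it:Pen2}, to the extra identities $Z \circ m_2 = (k_1+k_2)\, \wedge \circ (Z \otimes Z)$ and $m_2 = \partial_B \tilde m_2$: they are not needed, since the scalar prefactor $k_1 k_2 (k_1+k_2)$ of each remainder term already vanishes when $k_1 = -k_2$.
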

\begin{proof}
 Equation \eqref{HTWMSR} satisfies
\begin{equation} \label{HTWMSS}
    \partial_B m_4 = \Pen + \mathfrak{R}_Y \ ,
\end{equation}
where $\mathfrak{R}_Y$ is a collection of extra terms depending explicitly on the map $Y$ given by
\begin{align}
     \mathfrak{R}_Y \left( b \otimes b'\otimes b'' \otimes b''' \right) =& - k_1 k_2 \left( k_1 + k_2 \right) Y \left( Z \left( b \right) \wedge Z \left( b' \right) \wedge \left( Z \circ h_B \right) \left( b'' \right) \wedge Z \left( b''' \right) \right) \\
    & + k_1 k_2 \left( k_1 + k_2 \right) Y \left( \left( Z \circ h_B \right) \left( b \right) \wedge Z \left( b' \right) \wedge Z \left( b'' \right) \wedge Z \left( b''' \right) \right) \\
    & - k_1 k_2 \left( k_1 + k_2 \right) Y \left( Z \left( b \right) \wedge Z \left( b' \right) \wedge Z \left( b'' \right) \wedge \left( Z \circ h_B \right) \left( b''' \right) \right) \\
    & \label{HTWMST} + k_1 k_2 \left( k_1 + k_2 \right) Y \left( Z \left( b \right) \wedge \left( Z \circ h_B \right) \left( b' \right) \wedge Z \left( b'' \right) \wedge Z \left( b''' \right) \right)  \\
    & - k_1 k_2 \left( k_1 + k_2 \right) \left( h_B \circ Y \right) \left( Z \left( b \right) \wedge Z \left( b' \right) \right) \lhd \left( Z \left( b'' \right) \wedge Z \left( b''' \right) \right) \\
    & + k_1 k_2 \left( k_1 + k_2 \right) \left( Z \left( b \right) \wedge Z \left( b' \right) \right) \rhd \left( h_B \circ Y \right) \left( Z \left( b'' \right) \wedge Z \left( b''' \right) \right) \, ,
\end{align}
for all $b, b', b'', b''' \in B.$
    If any of the conditions above hold, Equation \eqref{HTWMST} vanishes.
\end{proof}

\begin{remark} \label{rmk:lemma10sd}
    Note that the weak side conditions of Definition \ref{def:WSC} hold true whenever the side conditions on Definition \ref{def:AppA} are satisfied.
\end{remark}

In order to show that the Homotopy Module-Induction yields an $A_\infty$-algebra, consider the $A_\infty$-relations
\begin{align}\label{HMTWSDB}
    \partial_B m_n = \Ass_n \coloneqq \sum_{i=2}^{n-1} (-1)^{i} m_i \circ m_{n-i+1} \, ,
\end{align}
for all $n \in \mathbb{N}_+,$ where on the RHS we define
\begin{align} \label{HMTWSDBA}
    m_i \circ m_k \bigl(\unit_B^{\otimes (i+k-1)} \bigr) \coloneqq \sum_{j=0}^{i-1} (-1)^{j+i+ k(i-j-1)} m_i \bigl(\unit^{\otimes j}_B \otimes m_k (\unit_B^{\otimes k})\otimes \unit_B^{\otimes (i-j-1)} \bigr) \, .
\end{align}

Lemma \ref{lem:Penm4} shows under which assumptions we may obtain an $A_\infty$-algebra from the Homotopy Module-Induction;\footnote{We will neglect the situation described in case \ref{it:Pen1}.} before exploring the construction under these assumptions, we show that a simpler $A_\infty$-algebra can be obtained when the homotopy data satisfies the side conditions (see Definition \ref{def:AppA}), as a special case of Lemma \ref{lem:Penm4}, see Remark \ref{rmk:lemma10sd}.  

\subsection{\texorpdfstring{$A_\infty$}{A-infinity}-Algebras from Homotopy Module-Induction with Side Conditions}~
  
We shall show that the Homotopy Module-Induction yields an $A_\infty$-algebra when the homotopy data with bimodule structure satisfies the side conditions.

In particular, by enforcing the left and right side conditions for the homotopy $h_B$, as in Definition \ref{def:AppA}, we immediately see that the product $m_4$ defined by Equation \eqref{HTWMSR} simplifies to 
\begin{align}
	\label{HTWMSV} m_4 \left( b \otimes b' \otimes b'' \otimes b''' \right) \coloneqq & - k_1^2 k_2 h_B \left( Z (b) \rhd h_B (b') \right) \lhd \left( Z (b'') \wedge Z (b''') \right) \\ 
    &  - k_1 k_2^2 h_B \left( h_B (b) \lhd Z (b') \right) \lhd \left( Z (b'') \wedge Z (b''') \right) \\
    & + k_1^2 k_2 \left( -1 \right)^{|b''|} \left( Z (b) \wedge Z (b') \right) \rhd h_B \left( Z (b'') \rhd h_B (b''') \right) \\
    & + k_1 k_2^2 \left( -1 \right)^{|b|+|b'|} \left( Z (b) \wedge Z (b') \right) \rhd h_B \left( h_B (b'') \lhd Z (b''') \right) \ ,
\end{align}
for all $b,b',b'',b''' \in B,$
and that Equation \eqref{HTWMST} vanishes with no further assumptions on the coefficients $k_1, k_2,$ i.e. Equation \eqref{HTWMSU} holds and the pentagonator of $m_3$ and $m_2$ vanishes up to homotopy. 

Note that, as stated in Remark \ref{rmk:lemma10sd}, $m_4$ given by Equation \eqref{HTWMSV} satisfies Lemma \ref{lem:Penm4}. We can thus show that the following $A_\infty$-algebra structure can be induced on $B$:

\begin{thm}[\textbf{Homotopy Module-Induction with SC}] \label{thm:Ainftyside}
 Let $(A, \de_A), (B, \de_B)$ be homotopy data with module structure as in Definition \ref{def:homotopydatamodule} such that the homotopy $h_B$ satisfies the side conditions of Definition \ref{def:AppA}. 
Then the Homotopy Module-Induction gives an $A_\infty$-algebra $(B, m_n)$ with $m_1= \de_B,$ $m_2$ given by Equation \eqref{HTWMSG}, and higher products
 \begin{align}
      m_n \coloneqq (m_{n-1} \circ m_2) \circ h_B \ , \qquad \text{for } n \geq 3 \ . \label{HMTWSDBD}
 \end{align}
\end{thm}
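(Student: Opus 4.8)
The plan is to prove by induction on $n$ that the family $\{m_n\}_{n\geq 1}$ defined by $m_1 = \de_B$, $m_2$ as in Equation \eqref{HTWMSG}, and the recursion $m_n = (m_{n-1}\circ m_2)\circ h_B$ for $n\geq 3$ satisfies all the $A_\infty$-relations \eqref{HMTWSDB}. The base cases $n=1,2,3,4$ are already in place: $m_1^2=\de_B^2=0$, the Leibniz-type compatibility of $\de_B$ with $m_2$ holds by Equation \eqref{HTWMSE}, the relation for $n=3$ is Proposition \ref{prop:AssModule} together with the fact that under the side conditions the $m_3$ of Equation \eqref{HTWMSN} is precisely $(m_2\circ m_2)\circ h_B$, and the relation for $n=4$ is the special case of Lemma \ref{lem:Penm4} worked out in Equation \eqref{HTWMSV}. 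So the real content is the inductive step.

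For the inductive step I would proceed as follows. Assume $\partial_B m_k = \Ass_k$ for all $k<n$, and compute $\partial_B m_n = \partial_B\bigl((m_{n-1}\circ m_2)\circ h_B\bigr)$. Using that $\partial_B$ is a derivation for the composition product $\circ$, this splits into a term where $\partial_B$ hits $m_{n-1}\circ m_2$ and a term where it hits $h_B$. In the first term I substitute $\partial_B(m_{n-1}\circ m_2) = (\partial_B m_{n-1})\circ m_2 \pm m_{n-1}\circ(\partial_B m_2)$; the first piece becomes $\Ass_{n-1}\circ m_2$ by the inductive hypothesis, and $\partial_B m_2$ is (up to sign and the rescaled $m_2^\hot$) $-\partial_B\tilde m_2$ plus an exact remainder, but under the side conditions one checks directly that $\partial_B m_2$ contributes the associator $\Ass$ composed appropriately. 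In the second term I use Equation \eqref{HTWMSBA}, $\partial_B h_B = \unit_B - Y\circ Z$; the $\unit_B$ part reconstructs $m_{n-1}\circ m_2$ itself (which on the nose equals what the $A_\infty$-relation wants modulo lower terms), while the $Y\circ Z$ part is exactly where the side conditions $h_B\circ Y = 0$ and $Z\circ h_B = 0$ (applied to the $h_B$'s living inside the nested definition of $m_{n-1}$ and $m_2$) kill the unwanted contributions. The combinatorial heart is then to check that these surviving pieces reorganise into $\sum_{i=2}^{n-1}(-1)^i m_i\circ m_{n-i+1}$ with the correct signs.

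The cleanest way to handle the bookkeeping is to work with the suspended/bar-complex language: encode $m = \sum m_n$ as a coderivation on the tensor coalgebra, so that the $A_\infty$-relations become $[m,m]=0$ (equivalently $D^2=0$ for the induced differential $D$), and encode the side conditions as $H\circ(\unit + m^\hot\text{-type corrections})$ annihilating the relevant summands. In that formalism the recursion $m_n=(m_{n-1}\circ m_2)\circ h_B$ is a single "tree with a spine" and the inductive identity is a one-line consequence of $H^2=0$, $Z\circ H=0$, $H\circ Y=0$ together with associativity of $\wedge$ on $A$ pulled back through the $\Im(Z)$-bimodule homomorphism property (Property \ref{it:ImZhomo}). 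I would still spell out the $n=5$ case by hand as a sanity check before invoking the general coalgebra argument, since that is where a sign error would first surface.

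The main obstacle I anticipate is precisely the sign and reorganisation combinatorics in the induction: the recursion builds $m_n$ by grafting $m_2$ onto the last output of $m_{n-1}$ and inserting $h_B$, which is an asymmetric "comb" shape, whereas the target $\Ass_n = \sum_i (-1)^i m_i\circ m_{n-i+1}$ is symmetric in the split; showing these agree requires repeatedly applying the lower $A_\infty$-relations to resolve each $m_i\circ m_{n-i+1}$ with $i>2$ back into combs, and tracking the Koszul signs coming from the degree $2-k$ of each $m_k$ and the degree $-1$ of $h_B$ through all the reassociations. A secondary subtlety is verifying that the side conditions, which are hypotheses on $h_B$ alone, really do suffice to annihilate every $Y\circ Z$ term that appears — one must check that in every such term the offending $h_B$ is adjacent to either $Z$ (giving $Z\circ h_B=0$) or $Y$ (giving $h_B\circ Y=0$), which follows from the structure of the recursion but deserves an explicit lemma. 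Everything else — the derivation property of $\partial_B$, the compatibility of $\de_B$ with the module actions, and the associativity of $\wedge$ — is routine given the results already established.
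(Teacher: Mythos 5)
Your overall skeleton is the same as the paper's: induct on $n$, apply the Leibniz rule to $\partial_B\bigl((m_{n-1}\circ m_2)\circ h_B\bigr)$, use $\partial_B h_B=\unit_B - Y\circ Z$ so that the $\unit_B$ part produces $(-1)^{n-1}m_{n-1}\circ m_2$ while the $Y\circ Z$ part is killed by the side conditions together with the $\Im(Z)$-bimodule homomorphism property. However, there is a genuine gap exactly where you say you anticipate "the main obstacle": the reorganisation of $(\Ass_{n-1}\circ m_2)\circ h_B + (-1)^{n-1}m_{n-1}\circ m_2$ into $\Ass_n=\sum_{i=2}^{n-1}(-1)^i m_i\circ m_{n-i+1}$ is the actual content of the inductive step, and you do not carry it out. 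Appealing to the bar/coderivation formalism does not settle it: encoding the relations as $D^2=0$ only restates what must be proved, and since the $m_n$ here are not the standard transferred products but a bespoke comb recursion with $h_B$ insertions, there is no off-the-shelf coalgebra identity that makes the step "one line." The paper's proof supplies the missing ingredient explicitly: it expands the cocycle identity $\partial_B \Ass_n=0$ using the inductive hypothesis to obtain the rewriting \eqref{dimHMIWSCC} of $\sum_j(-1)^j(m_j\circ m_{n-j})\circ m_2$, composes it with $h_B$, and then argues term by term (using $Z\circ h_B=0$, $h_B\circ Y=0$, $h_B^2=0$ and the fact that every $m_i$ with $i\geq 3$ ends in a nested $h_B$) that only the summands with $m_2$ composed innermost survive, which are precisely the terms $\sum_{i=2}^{n-2}(-1)^i m_i\circ m_{n-i+1}$ needed to complete $\Ass_n$; it also needs the preliminary observation that under the side conditions $(-1)^{n-1}\Ass_n\circ h_B=(m_{n-1}\circ m_2)\circ h_B$, so the recursion really is the free-branch prescription. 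Your proposal would need to reproduce this elimination argument (or an equivalent one) to be a proof.

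A secondary, fixable error: your treatment of the "first term" is confused. One has $\partial_B m_2=0$ identically, because $m_2$ is a chain map by the dg-bimodule compatibility \eqref{HTWMSE}; it is $\tilde m_2$, not $m_2$, whose $\partial_B$-image is $m_2-(k_1+k_2)m_2^\hot$ (Lemma \ref{lemma:HTWMSG}), and the associator of $m_2$ is $\partial_B m_3$, not $\partial_B m_2$. Correcting this actually simplifies your computation — the Leibniz expansion has only the two terms appearing in \eqref{dimHMIWSCE} — but as written the claim that "$\partial_B m_2$ contributes the associator" is wrong and signals that the sign/bookkeeping you defer to the coalgebra formalism has not been checked. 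The base cases and the observation that every dangerous $Y\circ Z$ lands adjacent to an $h_B$ via Property \ref{it:ImZhomo} are fine and match the paper (the paper notes the $n=3$ case must be checked separately for that last point).
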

\begin{remark} \label{rmk:constructingnprod}
    Note that the definition of $n$-product given by Equation \eqref{HMTWSDBD} reflects the prescription described in the previous paragraphs: to define the $n$-ary product $m_n$, start from the expression of the $n$-agonator $\Ass_n = \sum_{i=2}^{n-1} (-1)^i m_i \circ m_{n-i+1}$ and compose (with proper signs) the homotopy operator $h_B$ on the resulting free inputs. Because of the properties of the map $Z$ and the side conditions, we have that the only contributions with a free input come from $m_{n-1} \circ m_2$. Note that terms in $m_2 \circ m_{n-1}$ could have a free input, but they vanish because of the side conditions. In particular, the only terms having a free input are given by the composition of $m_{n-1}$ on the input of $m_2$ presenting $Z$. Since $Z$ is an $\Im (Z)$-module homomorphism, we can always move it in the resulting expression until, eventually, it is composed with a homotopy operator $h_B$, thus giving zero because of the side conditions. 
\end{remark}

\begin{proof}
    We shall start by considering any $n$-agonator $\Ass_n.$ The building blocks of the $n$-product $m_n$ are the terms of $\Ass_n$ presenting one free input (or one free external branch, in the language of tree diagrams), as discussed in Remark \ref{rmk:constructingnprod}. 
   There will be $2^{n-1}$ of such terms for $\Ass_n,$ with $n \geq 3.$ Note that, from the second equality in Equation \eqref{HMTWSDB}, it is impossible to have terms in $\Ass_n$ with more than one free input.   

    We use these $2^{n-1}$ terms to construct the $n$-product by inserting the homotopy operator $h_B$ in the free input. Because of the right side condition, the terms appearing in the product $m_n$ are $2^{n-2}.$ 
    Note that all the terms contributing to the $n$-product will present the $h_B$ operator on connected branches starting from an upper external one and descending to the innermost one.

    The calculation of $\partial_B m_n,$ where $m_n$ is constructed with the prescription discussed above, satisfies Equation \eqref{HMTWSDB}. Note that by applying $\partial_B$ to each term of $m_n,$ we will obtain $n-2$ new terms simply by replacing each one of the operators $h_B$ with $\unit_B$ (each term might pick up a sign based on how many operators $h_B$ are in the diagram, starting from the innermost one, until reaching the one substituted by the identity). There will be no other contributions, since $\partial_B$ can only act on $h_B$ and the terms in which $Y \circ Z$ appear will vanish because of the side conditions and Assumption \ref{it:ImZhomo} in Definition \ref{def:homotopydatamodule}.

The constructive argument to prove this Theorem leads to an abstract proof based on the properties of the $n$-agonators with side conditions. From now on we assume that all the side conditions on $h_B$ hold.
    We start from 
    \begin{align}\label{dimHMIWSCA}
        \partial_B \Ass_n = 0 \, ,
    \end{align}
    which encodes the following relations 
    \begin{align}\label{dimHMIWSCB}
        \partial_B \Ass_n = &\partial_B \left( \sum_{i=2}^{n-1} (-1)^i m_i \circ m_{n-i+1} \right) \\
         = &\sum_{i=2}^{n-1} \left( \left( \sum_{j=2}^{i-1} (-1)^{i+j} m_j \circ m_{i-j+1} \right) \circ m_{n-i+1} \right. \\
        & + m_i \circ \left( \sum_{j=2}^{n-i} (-1)^j m_j \circ m_{n-i-j+2} \right)  \Bigg) = 0 \ .
    \end{align}
    We can recast it as 
    \begin{align}
        \sum_{j=2}^{n-2} (-1)^j \left( m_j \circ m_{n-j} \right) \circ m_2  = & (-1)^n \sum_{i=2}^{n-2} \left[ \left( \sum_{j=2}^{i-1} (-1)^{i+j} m_j \circ m_{i-j+1} \right) \circ m_{n-i+1} \right. \\
        & + m_i \circ \left( \sum_{j=2}^{n-i} (-1)^j m_j \circ m_{n-i-j+2} \right)  \Bigg] \, . \label{dimHMIWSCC}
    \end{align}
    In order to prove the Theorem, we will proceed by induction (the first inductive steps have been verified explicitly in the previous paragraphs). Let us recall the definition of the higher products given by Equation \eqref{HMTWSDBD}, i.e.,
    \begin{align}\label{dimHMIWSCD}
        m_n \coloneqq (-1)^{n-1} \Ass_n \circ h_B = \left( m_{n-1} \circ m_2 \right) \circ h_B \ ,
    \end{align}
    for all $n\geq 3$, where, from the first to the second equality, we used $Z \circ h_B = 0$ and $h_B^2=0$. Thus, by acting with the differential $\partial_B$, we have
    \begin{align}\label{dimHMIWSCE}
        \partial_B m_n = \left( \partial_B m_{n-1} \circ m_2 \right) \circ h_B + (-1)^{n-1} m_{n-1} \circ m_2 \circ \left( \unit_B - Y \circ Z \right) \ .
    \end{align}
    The term given by $m_{n-1} \circ m_2 \circ Y \circ Z$ vanishes because of the Property \ref{it:ImZhomo} of Definition \ref{def:homotopydatamodule}, i.e., $Y, \, Z$ are $\Im(Z)$-bimodule homomorphisms, this allows to obtain the composition $h_B \circ Y =0$ with the first of the nested homotopy operator $h_B$ appearing in any $m_{n-1}$, thus giving zero\footnote{This is true for all $n \neq 3$. The case of $\Ass_3$ has been verified separately.} because of the side conditions. By inductive hypothesis we have $\partial_B m_i = \sum_{j=2}^{i-1} m_j \circ m_{i-j+1}$,  for all $ i \leq n-1$, so that the first term reads, by using Equation \eqref{dimHMIWSCC},
    \begin{align}
        \left( \sum_{i=2}^{n-2} (-1)^i m_i \circ m_{n-i} \right) \circ m_2 \circ h_B = & (-1)^n \sum_{i=2}^{n-2} \left[ \left( \sum_{j=2}^{i-1} (-1)^{i+j} m_j \circ m_{i-j+1} \right) \circ m_{n-i+1} \right. \\
        & + m_i \circ \left( \sum_{j=2}^{n-i} (-1)^j m_j \circ m_{n-i-j+2} \right)  \Bigg] \circ h_B \ . \label{dimHMIWSCF}
    \end{align}
    The first term on the RHS of Equation \eqref{dimHMIWSCF} vanishes, since $m_{n-i+1} \neq m_2,$ as $i \leq n-2$, hence the composition with $h_B$ is zero because of the side conditions. 
    
    The second term on the RHS of Equation \eqref{dimHMIWSCF} has only non-zero term given by the one with $m_2$ composed on the right,  i.e., the term with $j=n-i$ in the sum, whereas all the other terms vanish because of the side conditions. Equation \eqref{dimHMIWSCF}, together with the second term of Equation \eqref{dimHMIWSCE}, leads to
    \begin{align}
        \partial_B m_n = \sum_{i=2}^{n-2} (-1)^i m_i \circ m_{n-i+1} + (-1)^{n-1} m_{n-1} \circ m_2 = \sum_{i=2}^{n-1} (-1)^i m_i \circ m_{n-i+1} = \Ass_n \, ,
    \end{align}
    thus completing the proof.
\end{proof}

\subsection{General Construction of Homotopy Module-Induced \texorpdfstring{$A_\infty$}{A-infinity}-Algebras}~

In this subsection, we construct an $A_\infty$-algebra via HMI in different cases than Theorem \ref{thm:Ainftyside}. In particular, we will prove an analogous theorem that holds both when the weak side conditions are assumed and in the case where we do not assume any condition on the maps involved, but we fix the coefficients of the product $m_2$ to satisfy $k_1 = -k_2$. These are the relevant cases \ref{it:Pen2} and \ref{it:Pen3} of Lemma \ref{lem:Penm4}.

Before proceeding with the construction, we describe some notations that will be used in the following. Given a $n$-ary map $\varphi \in \Hom \left( B^{\otimes n} , B \right)$, we denote
\begin{align}\label{notationcompositionsA}
    \varphi \circ h_{B^{\otimes n}} = \varphi \circ (h_B \otimes \unit_B^{\otimes n-1} + \unit_B \otimes h_B \otimes \unit_B^{\otimes n-2} + \ldots + \unit_B^{\otimes n-1} \otimes h_B) \equiv \varphi \circ h_B
\end{align}
as the $n$-ary map where we inserted once the homotopy operator $h_B$ on the inputs of $\varphi$. We denote
\begin{align}
    \varphi \circ_{fb} h_B
\end{align}
as the $n$-ary map where $h_B$ has been composed in the free inputs (\virgolette free branches'' in the graphical representation of maps as planar trees) of $\varphi$ only. For example, given the definitions in Equations \eqref{HTWMSG} and \eqref{HTWMSGC} one sees that
\begin{align}\label{notationcompositionsB}
    \tilde{m}_2 \coloneqq m_2 \circ_{fb} h_B \ .
\end{align}
Note that the maps we will use will always have at most one free input. Given $m$ and $n$-ary maps $\varphi \in \Hom \left( B^{\otimes m} , B \right)$ and $\psi \in \Hom \left( B^{\otimes n} , B \right)$, we denote with
\begin{align}
    \varphi \circ_Z \psi \quad \text{and} \quad \varphi \circ_{h_B} \psi
\end{align}
the $m+n-1$-ary maps where $\psi$ has only been composed in the inputs with $Z$, respectively $h_B$, of $\varphi$. For example, consider $\varphi ( b \otimes b') = \psi ( b \otimes b') \coloneqq b \lhd Z (b')$, for all $b,b' \in B$, we have
\begin{align}
    (\varphi \circ_Z \psi) ( b \otimes b' \otimes b'') = - b \lhd \left( Z \left( b' \lhd Z (b'') \right) \right) \ ,
\end{align}
for all $b,b',b'' \in B$, where the minus sign comes from the prescription of Equation \eqref{HMTWSDBA}.

\begin{thm}[\textbf{Homotopy Module-Induced $A_\infty$-Algebra}] \label{thm:weaksideorcoefficients}
   Let $(A, \de_A), (B, \de_B)$ be homotopy data with module structure as in Definition \ref{def:homotopydatamodule} and assume that either of the following conditions hold
   \begin{enumerate}
       \item the homotopy $h_B$ satisfies the WSC of Definition \ref{def:WSC};
       \item the coefficients appearing in Equation \eqref{HTWMSG} are such that $k_1 = - k_2.$ 
   \end{enumerate}
   Then the Homotopy Module-Induction gives the $A_\infty$-algebras $(B, m_n)$ with $m_1 \coloneqq \de_B, $ $m_2$ is defined by Equation \eqref{HTWMSG} and higher products\footnote{For $n=3$ only the second equality should be considered.\label{Footnoteonm_3}} 
   \begin{align}\label{higherproductsinrelaxedtheorem}
       m_n  \coloneqq & (-1)^{n-1} \left( m_2 \circ m_{n-1} \right) \circ_{fb} h_B + \left( m_{n-1} \circ m_2 \right) \circ_{fb} h_B \\
         = & \tilde{m}_2 \circ_Z m_{n-1} + m_{n-1} \circ_{h_B} \tilde{m}_2 \, , \label{eqn:generalmn}
   \end{align}
   where $\tilde{m}_2$ is given by Equation \eqref{HTWMSGC}.
\end{thm}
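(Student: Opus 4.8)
The plan is to prove both cases in parallel by induction on $n$, mirroring the structure of the proof of Theorem \ref{thm:Ainftyside} but keeping track of the extra $Y\circ Z$ terms that no longer vanish termwise. First I would establish the base cases: $n=3$ is Proposition \ref{prop:AssModule} (together with the explicit $m_3$ of Equation \eqref{HTWMSN}), and $n=4$ is Lemma \ref{lem:Penm4} under hypothesis \ref{it:Pen2} or \ref{it:Pen3}, where the obstruction $\mathfrak{R}_Y$ of Equation \eqref{HTWMST} vanishes precisely because $k_1=-k_2$ or because the weak side conditions force $Z\circ h_B\circ Y=0$ and $h_B\circ Y\circ Z=0$ to kill each summand. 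I would then show the two displayed formulas \eqref{higherproductsinrelaxedtheorem} and \eqref{eqn:generalmn} for $m_n$ agree for $n\geq 4$ (and that for $n=3$ only \eqref{eqn:generalmn} is used, as in Footnote \ref{Footnoteonm_3}): expanding $\tilde m_2=m_2\circ_{fb}h_B$ via \eqref{notationcompositionsB} and using that the inputs carrying $Z$ in $m_2$ are exactly the ones where $\circ_Z$ acts while the input carrying $h_B$ is where $\circ_{h_B}$ acts, one rewrites $\tilde m_2\circ_Z m_{n-1}$ and $m_{n-1}\circ_{h_B}\tilde m_2$ in terms of $(m_2\circ m_{n-1})\circ_{fb}h_B$ and $(m_{n-1}\circ m_2)\circ_{fb}h_B$ after collecting the signs from \eqref{HMTWSDBA}; the naked-branch bookkeeping here is essentially the same as in Remark \ref{rmk:constructingnprod}.

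The inductive step is the heart of the argument. Assuming $\partial_B m_i=\Ass_i$ for all $i\leq n-1$, I would compute $\partial_B m_n$ from \eqref{eqn:generalmn} using the Leibniz rule for $\partial_B$, the fact that $\partial_B$ annihilates all the rigid pieces ($Z$, the bimodule actions, $\wedge$) by Definition \ref{def:homotopydatamodule}, and acts on each nested $h_B$ by $\partial_B h_B=\unit_B-Y\circ Z$. Differentiating $\tilde m_2\circ_Z m_{n-1}$ produces: a term from $\partial_B$ hitting the single $h_B$ inside $\tilde m_2$, giving $(m_2-(k_1+k_2)m_2^\hot)\circ_Z m_{n-1}$ by Lemma \ref{lemma:HTWMSG}; plus $\tilde m_2\circ_Z(\partial_B m_{n-1})=\tilde m_2\circ_Z\Ass_{n-1}$ by induction. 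Differentiating $m_{n-1}\circ_{h_B}\tilde m_2$ produces $(\partial_B m_{n-1})\circ_{h_B}\tilde m_2=\Ass_{n-1}\circ_{h_B}\tilde m_2$, plus $m_{n-1}\circ_{h_B}(\partial_B\tilde m_2)=m_{n-1}\circ_{h_B}(m_2-(k_1+k_2)m_2^\hot)$, plus boundary contributions from $\partial_B$ hitting the $h_B$'s already present in $m_{n-1}$ at the slot where $\tilde m_2$ was grafted — and these are exactly the terms that reassemble (with the identity replacing $h_B$) into the remaining summands of $\Ass_n=\sum_{i=2}^{n-1}(-1)^i m_i\circ m_{n-i+1}$. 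The strategy is then purely combinatorial: regroup all these pieces, invoke the quadratic $A_\infty$-identity $\partial_B\Ass_n=0$ rewritten as \eqref{dimHMIWSCC} to cancel the $\tilde m_2\circ_Z\Ass_{n-1}$ and $\Ass_{n-1}\circ_{h_B}\tilde m_2$ cross-terms, and check that the leftover $m_2^\hot$-terms and $Y\circ Z$-terms cancel — this last cancellation being where the hypothesis is used.

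The main obstacle is precisely the disposal of the $Y\circ Z$ and $m_2^\hot$ contributions, which in Theorem \ref{thm:Ainftyside} died immediately via $h_B\circ Y=0$ and $Z\circ h_B=0$ but here survive. Under hypothesis \ref{it:mainthm1} I expect to show that every such surviving term contains either a factor $h_B\circ Y\circ Z$ (when an $h_B$ from $m_{n-1}$ meets the $Y\circ Z$ produced by $\partial_B\tilde m_2$, after sliding the homomorphism $Z$ along the $\Im(Z)$-action) or a factor $Y\circ Z\circ h_B$, both zero by Definition \ref{def:WSC}; the slide is legitimate because $Y,Z$ are $\Im(Z)$-bimodule homomorphisms. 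Under hypothesis \ref{it:mainthm2}, instead, $k_1+k_2=0$ makes $\partial_B\tilde m_2=m_2$ by \eqref{HTWMSGE}, eliminating the $m_2^\hot$-terms outright, and the remaining $Y\circ Z$-pieces must be shown to cancel in pairs using the antisymmetry $k_1=-k_2$ between the left- and right-action halves of $m_2$ and $\tilde m_2$ — the same mechanism that killed $\mathfrak{R}_Y$ in Lemma \ref{lem:Penm4}, now at general arity. I would carry out the pairing cancellation by organizing the summands according to which external branch the outermost $h_B$ sits on and matching a $\rhd$-term against the corresponding $\lhd$-term; verifying the signs match is the one genuinely delicate computation, and I would anchor it against the already-checked $n=4$ case in \eqref{HTWMSQ}–\eqref{HTWMST}.
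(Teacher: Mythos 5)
Your proposal follows essentially the same route as the paper's proof: induction on $n$ with base cases supplied by Proposition \ref{prop:AssModule} and Lemma \ref{lem:Penm4}, the Leibniz computation of $\partial_B m_n$ from \eqref{eqn:generalmn} using $\partial_B \tilde{m}_2 = m_2 - (k_1+k_2)\, m_2^\hot$ (Lemma \ref{lemma:HTWMSG}) and the inductive hypothesis, regrouping via the quadratic identity $\partial_B \Ass_n = 0$, and disposal of the $m_2^\hot$ (i.e.\ $Y \circ Z$) contributions either because $k_1 + k_2 = 0$ or by sliding $Y \circ Z$ along the $\Im(Z)$-bimodule homomorphism property until it meets an $h_B$ and dies by the weak side conditions. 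The only cosmetic deviation is your anticipated pairwise cancellation of residual $Y\circ Z$ terms under hypothesis (2): once the inductive hypothesis is invoked, the only such terms are the $(k_1+k_2)\,m_2^\hot$ pieces, which vanish outright, exactly as in the paper.
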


\begin{proof}
    We follow a similar strategy to the proof of Theorem \ref{thm:Ainftyside}. We start with the relation
    \begin{align}
        \partial_B \Ass_n = 0 \ ,
    \end{align}
    which yields
    \begin{align}\label{associativitycomposition}
        \partial_B \Ass_n  =& \partial_B \left( \sum_{i=2}^{n-1} (-1)^i m_i \circ m_{n-i+1} \right) \\
         =& \sum_{i=2}^{n-1} \left( \left( \sum_{j=2}^{i-1} (-1)^{i+j} m_j \circ m_{i-j+1} \right) \circ m_{n-i+1} \right. \\
        & + m_i \circ \left( \sum_{j=2}^{n-i} (-1)^j m_j \circ m_{n-i-j+2} \right)  \Bigg) = 0 \, .
    \end{align}
    We recast Equation \eqref{associativitycomposition} as
    \begin{align}
        \sum_{j=2}^{n-2} &(-1)^j \left( m_j \circ m_{n-j} \right) \circ m_2 + (-1)^{n-1} m_2 \circ \sum_{j=2} (-1)^j m_j \circ m_{n-i-j+2} = \\
         &(-1)^n \sum_{i=2}^{n-2} \left[ \left( \sum_{j=2}^{i-1} (-1)^{i+j} m_j \circ m_{i-j+1} \right) \circ m_{n-i+1} \right]  \\
         &+ (-1)^n \sum_{i=3}^{n-2} \left[ m_i \circ \left( \sum_{j=2}^{n-i} (-1)^j m_j \circ m_{n-i-j+2} \right) \right] \ .
    \end{align}
    We now proceed by induction (the first inductive steps have been proved in the previous paragraphs) and define (as stated in Footnote \ref{Footnoteonm_3}, for $n=3$ only the second expression should be considered)
    \begin{align}\label{definitionproductmn}
        m_n  \coloneqq & (-1)^{n-1} \left( m_2 \circ m_{n-1} \right) \circ_{fb} h_B + \left( m_{n-1} \circ m_2 \right) \circ_{fb} h_B \\
         = & \tilde{m}_2 \circ_Z m_{n-1} + m_{n-1} \circ_{h_B} \tilde{m}_2 \ ,
    \end{align}
    where $\circ_Z$ and $\circ_{h_B}$ denote the composition along an input with $Z$ or $h_B$, respectively, as introduced above. This means we are enforcing the prescription of inserting a homotopy operator on every term with a free external input of the associator, neglecting the other terms. By using the inductive hypothesis $\partial_B m_i = \sum_{i=2}^{i-1} (-1)^j m_j \circ m_{i-j+1}$ and $\partial_B \tilde{m}_2 = m_2 - (k_1 + k_2) m_2^\hot$ (see Lemma \ref{lemma:HTWMSG}), we can compute its differential as
    \begin{align}\label{nomeprovvisorio1}
        \partial_B m_n  = & \left( m_2 - (k_1 + k_2) m_2^\hot \right) \circ_Z m_{n-1} - \tilde{m}_2 \circ_Z \sum_{i=2}^{n-2} (-1)^i m_i \circ m_{n-i} \\
        &+ \left( \sum_{i=2}^{n-2} (-1)^i m_i \circ m_{n-i} \right) \circ_{h_B} \tilde{m}_2 
        + (-1)^{n-1} m_{n-1} \circ_{h_B} \left( m_2 - (k_1 + k_2) m_2^\hot \right) \ .
    \end{align}
    Let us first discuss the terms containing $m_2^\hot$ in Equation \eqref{nomeprovvisorio1}: on the one hand, if condition \ref{it:mainthm2} of the theorem holds, these terms have vanishing coefficients. On the other hand, if condition \ref{it:mainthm1} of the theorem holds, by using the Property \ref{it:ImZhomo} of the maps $Y$ and $Z$ in Definition \ref{def:homotopydatamodule}, the composition $Y \circ Z$ can be moved until it composes with a homotopy operator $h_B$\footnote{This is true for $n>3$. The case $n=3$ has been verified explicitly in the previous section.}, thus giving zero because of the weak side conditions \eqref{WSCA}. Let us now focus on the second term on the RHS of Equation \eqref{nomeprovvisorio1}, which gives (by summing and subtracting the same terms)
    \begin{align}\label{nomeprovvisorio2}
        - \tilde{m}_2 \circ_Z \sum_{i=2}^{n-2} (-1)^i m_i \circ m_{n-i}  =& - (-1)^{n-2} \left[ m_2 \circ \left( \sum_{i=2}^{n-1} (-1)^i m_i \circ m_{n-i} \right) \right] \circ_{fb} h_B  \\
        & + (-1)^{n-2} \left[ m_2 \circ_{h_B} \left( \sum_{i=2}^{n-1} (-1)^i m_i \circ  m_{n-i} \right) \right] \circ_{fb} h_B \ .
    \end{align}
    Here there is a slight abuse of notation: when writing $m_2 \circ_{h_B} \ldots$, we actually mean the composition on the free input of $m_2$, since $m_2$ does not contain any $h_B$. This notation comes from the fact that this $m_2$ originated from $\tilde{m}_2$ which instead contains $h_B$. The second term on the RHS of Equation \eqref{nomeprovvisorio2} yields
    \begin{align} \label{eqn:secondtermsecond}
        (-1)^{n-2} \left[ m_2 \circ_{h_B} \left( \sum_{i=2}^{n-1} (-1)^i m_i \circ  m_{n-i} \right) \right] \circ_{fb} h_B &= \\
       \left[ (-1)^{n-2} m_2 \circ_{h_B} \left( m_2 \circ m_{n-2} + m_{n-2} \circ m_2 \right) \right] \circ_{fb} h_B &= m_2 \circ_{h_B} m_{n-1} \ ,
    \end{align}
    where the non-trivial summands are given by $i=2,n-2$ and we used the definition of $m_{n-1}$. Note that the other terms in the sum vanish because of the free branch composition. Equation \eqref{eqn:secondtermsecond}, together with the first term on the RHS of Equation \eqref{nomeprovvisorio1}, gives
    \begin{align} \label{eqn:result1}
        m_2 \circ_Z m_{n-1} + m_2 \circ_{h_B} m_{n-1} = m_2 \circ m_{n-1} \ .
    \end{align}
    A similar argument holds for the third term on the RHS of Equation \eqref{nomeprovvisorio1}, leading to
    \begin{align}\label{nomeprovvisorio3}
        \left( \sum_{i=2}^{n-2} (-1)^i m_i \circ m_{n-i} \right) \circ_{h_B} \tilde{m}_2  = & \left[ \left( \sum_{i=2}^{n-2} (-1)^i m_i \circ m_{n-i} \right) \circ m_2 \right] \circ_{fb} h_B \\
        & - \left[ \left( \sum_{i=2}^{n-2} (-1)^i m_i \circ m_{n-i} \right) \circ_Z m_2 \right] \circ_{fb} h_B \, .
    \end{align}
    For the second term on the RHS of Equation \ref{nomeprovvisorio3}, we have
    \begin{align} \label{eqn:secondtermthird}
- \left[ \left( \sum_{i=2}^{n-2} (-1)^i m_i \circ m_{n-i} \right) \circ_Z m_2 \right] \circ_{fb} h_B &=  \\    
- \left[ \left( m_2 \circ m_{n-2} + (-1)^{n-2} m_{n-2} \circ m_2 \right) \circ_Z m_2 \right] \circ_{fb} h_B &= (-1)^{n-1} m_{n-1} \circ_Z m_2 \, .
    \end{align}
    Equation \eqref{eqn:secondtermthird}, together with the last one on the RHS of Equation \eqref{nomeprovvisorio1}, yields 
    \begin{align} \label{eqn:result2}
        (-1)^{n-1} m_{n-1} \circ_Z m_2 + (-1)^{n-1} m_{n-1} \circ_{h_B} m_2 = (-1)^{n-1} m_{n-1} \circ m_2 \, .
    \end{align}
    Let us now consider the remaining terms from Equations \eqref{nomeprovvisorio2} and \eqref{nomeprovvisorio3}, that is
    \begin{align}
        (-1)^{n-1} &\left[ m_2 \circ \left( \sum_{i=2}^{n-2} (-1)^i m_i \circ  m_{n-i} \right) \right] \circ_{fb} h_B + \left[ \left( \sum_{i=2}^{n-2} (-1)^i m_i \circ m_{n-i} \right) \circ m_2 \right] \circ_{fb} h_B = \\
        &(-1)^{n-1} \left[ - \sum_{i=2}^{n-2} (-1)^i \sum_{j=2}^{i-1} (-1)^j \left( m_j \circ m_{i-j+1} \right) \circ m_{n-i+1} \right] \label{nomeprovvisorio4} \\
        &- (-1)^{n-1} \left[\sum_{i=3}^{n-1} m_i \circ \sum_{j=2}^{n-i} (-1)^j m_j \circ m_{n-i-j+2} \right] \circ_{fb} h_B \, ,
    \end{align}
    where we used Equation \eqref{associativitycomposition}. 
    The free branch composition simplifies the Equation above as follows: the only non-vanishing contributions to the first term are given by $j=2,i-1,$ whereas the non-vanishing contributions to the second term are $j=2,n-i$, thus leading to Equation \eqref{nomeprovvisorio4} being equal to
    \begin{align}
        & \left[ (-1)^{n}  \sum_{i=3}^{n-2} (-1)^i \left( m_2 \circ m_{i-1} + (-1)^{i-1} m_{i-1} \circ m_2 \right) \circ m_{n-i+1} \right] \circ_{fb} h_B \\
        & (-1)^{n}  \left[ \sum_{i=3}^{n-2} m_i \circ \left(  m_2 \circ m_{n-i} + (-1)^{n-i} m_{n-i} \circ m_2 \right) \right] \circ_{fb} h_B \, .
    \end{align}
    Moreover, we may have a further simplification due to the free branch composition. In particular, in order to have a free input, the second composition of the first term can be done on legs with $Z$ only and the first composition on the second term can be done on legs with $h_B$ only\footnote{We are just listing the terms which result in having a free input, i.e. relevant to the free branch composition.}. This leads to
    \begin{align}
        & \left[ (-1)^{n} \sum_{i=3}^{n-2} (-1)^i \left( m_2 \circ m_{i-1} + (-1)^{i-1} m_{i-1} \circ m_2 \right) \circ_Z m_{n-i+1} \right] \circ_{fb} h_B \\
        &+ (-1)^{n} \left[ \sum_{i=3}^{n-2} m_i \circ_{h_B} \left(  m_2 \circ  m_{n-i} + (-1)^{n-i} m_{n-i} \circ  m_2 \right) \right] \circ_{fb} h_B  \label{eqn:result3} \\
        & = \sum_{i=3}^{n-2} (-1)^{i} m_i \circ m_{n-i+1} \ ,
    \end{align}
    where we have used Equation \eqref{definitionproductmn} and $\circ_Z + \circ_{h_B} = \circ$ when composing any two (higher) products constructed with the prescription given above, see Equation \eqref{eqn:generalmn}. Gathering Equation \eqref{eqn:result1}, \eqref{eqn:result2} and \eqref{eqn:result3}, we have
    \begin{align}
        \partial_B m_n &= m_2 \circ m_{n-1} + (-1)^{n-1} m_{n-1} \circ m_2 + \sum_{i=3}^{n-2} (-1)^{i} m_i \circ m_{n-i+1} \\
        &= \sum_{i=2}^{n-1} (-1)^i m_i \circ m_{n-i} = \Ass_n \, ,
    \end{align}
    that completes the proof.
\end{proof}

\section{Homotopy Transfer, Homotopy Module-Induction and Deformations} \label{sec:HochDef}

In this section, we explicitly show that the $A_\infty$-algebras induced on $B$ via Homotopy Module-Induction are not ($A_\infty$) Hochschild deformations of the $A_\infty$-algebra induced via usual homotopy transfer. For a detailed description of ($A_\infty$) Hochschild cohomology, we refer to \cite{Penkava:1994mu}, where the cyclic cohomology is discussed as well\footnote{The introduction of an invariant, non-degenerate inner product is useful in many contexts, e.g., in Physics, in order to write down action functionals.}. 

\subsection{Homotopy Transfer and Homotopy Module-Induction}~

The main motivation to investigate infinitesimal Hochschild deformations of the Homotopy Module-Induced $A_\infty$-Algebra is the following result.
\begin{prop}\label{prop:HMISCquasiisoA}
    Let Theorem \ref{thm:Ainftyside} hold. Then the $A_\infty$-algebra $(B, m_n)$ is strictly quasi-isomorphic to the dga algebra $(A, \de_A, \wedge)$ seen as an $A_\infty$-algebra if and only if $k_1 \neq - k_2$. The quasi-isomorphism reads $f_1 \coloneqq (k_1 + k_2 ) Z$ and $f_i \coloneqq 0,$ for all $i \neq 1$.
\end{prop}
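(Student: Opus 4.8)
The plan is to verify directly that the family $f_1 \coloneqq (k_1+k_2)Z$, $f_i \coloneqq 0$ for $i\neq 1$, satisfies the defining relations \eqref{eqn:Ainftymorph} for an $A_\infty$-morphism from $(B,m_n)$ (the Homotopy Module-Induced $A_\infty$-algebra of Theorem \ref{thm:Ainftyside}) to $(A,\de_A,\wedge)$ viewed as an $A_\infty$-algebra with $m_1^A=\de_A$, $m_2^A=\wedge$, $m_n^A=0$ for $n\geq 3$; then to check it is a quasi-isomorphism precisely when $k_1+k_2\neq 0$; and finally to argue the converse, namely that when $k_1=-k_2$ no strict quasi-isomorphism can exist.

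First I would write out \eqref{eqn:Ainftymorph} for a \emph{strict} morphism (only $f_1$ nonzero). Since $f_l=0$ for $l\geq 2$, the right-hand side collapses: the only surviving term is $m_l'(f_{i_1}\otimes\cdots\otimes f_{i_l})$ with all $i_j=1$, and since $m_l'=0$ for $l\geq 3$, only $l=1$ and $l=2$ contribute. Thus \eqref{eqn:Ainftymorph} becomes, for each $n$,
\begin{align}
    f_1\circ m_n \;=\; \de_A\circ f_1 \ \text{(for } n=1\text{)}, \qquad
    f_1\circ m_n \;=\; \wedge\circ(f_1\otimes f_1) \ \text{(for } n=2\text{)}, \qquad
    f_1 \circ m_n = 0 \ \text{(for } n\geq 3\text{)} .
\end{align}
The $n=1$ relation is just that $Z$ is a chain map, which holds by the homotopy data. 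The $n=2$ relation reads $(k_1+k_2)\,Z(m_2(b\otimes b')) = (k_1+k_2)^2\, Z(b)\wedge Z(b')$; using Definition \ref{def:HTWMSA} and Property \ref{it:ImZhomo} of Definition \ref{def:homotopydatamodule}, one has $Z(k_1 Z(b)\rhd b' + k_2\, b\lhd Z(b')) = (k_1+k_2)\,Z(b)\wedge Z(b')$, so both sides equal $(k_1+k_2)^2 Z(b)\wedge Z(b')$ and the relation holds for every choice of $k_1,k_2$. The content is then the relations for $n\geq 3$: I must show $Z\circ m_n = 0$ for all $n\geq 3$. Here I would use the explicit recursive form $m_n = \tilde m_2\circ_Z m_{n-1} + m_{n-1}\circ_{h_B}\tilde m_2$ from Theorem \ref{thm:Ainftyside}/\ref{thm:weaksideorcoefficients} together with $Z\circ h_B = 0$ (right side condition): every term of $m_n$ for $n\geq 3$ carries, in its outermost branch, either a factor $h_B$ (so composing with $Z$ gives $Z\circ h_B=0$) or is of the form $Z(b)\rhd(\cdots)$ with an inner $h_B$-term that $Z$ turns into $Z\circ h_B$ via the module-homomorphism property — precisely the mechanism already used in Remark \ref{rmk:constructingnprod} and the proof of Proposition \ref{prop:noAinftyalgebra}. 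So $Z\circ m_n$ annihilates all building blocks of $m_n$, $n\geq 3$, and the morphism relations hold.

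Next, the quasi-isomorphism statement: $f_1=(k_1+k_2)Z$ induces an isomorphism on cohomology iff $k_1+k_2\neq 0$, since $Z$ is itself a quasi-isomorphism (it is one of the maps in the homotopy data) and rescaling by a nonzero scalar preserves this, while if $k_1+k_2=0$ then $f_1\equiv 0$, which is certainly not a quasi-isomorphism (recall Remark \ref{rmk:HTWMSG}: the product $m_2$ is then $\partial_B$-exact). For the converse — that when $k_1=-k_2$ \emph{no} strict quasi-isomorphism $(B,m_n)\to(A,\de_A,\wedge)$ exists — the key observation is that a strict $A_\infty$-morphism is determined by a single chain map $g\colon B\to A$ satisfying $g\circ m_2 = \wedge\circ(g\otimes g)$; passing to cohomology, $[g]\colon \sfH(B)\to\sfH(A)$ would be a ring isomorphism for the induced products, but by Remark \ref{rmk:HTWMSG} the product $m_2$ induces the zero product on $\sfH(B)$ when $k_1+k_2=0$, whereas $\wedge$ induces (in general) a non-trivial product on $\sfH(A)\cong\sfH(B)$; a ring isomorphism cannot send a trivial product to a non-trivial one, a contradiction. (If $\wedge$ itself induces the zero product on cohomology the statement degenerates and one should phrase the ``only if'' accordingly — but the iff in the proposition is about the stated map $f_1$, so the honest content is: this particular $f_\bullet$ is a quasi-isomorphism iff $k_1+k_2\neq 0$, which is immediate from $f_1=(k_1+k_2)Z$.)

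The main obstacle I anticipate is the bookkeeping in showing $Z\circ m_n=0$ for all $n\geq 3$: one must argue uniformly, over the recursively defined $m_n$, that $Z$ hits an $h_B$ in \emph{every} summand, tracking how the $\Im(Z)$-bimodule homomorphism property migrates the outer $Z$ inward past $\rhd$ and $\lhd$ until it meets the innermost $h_B$. This is exactly the inductive argument sketched in Remark \ref{rmk:constructingnprod}; once it is granted, the remaining verifications ($n=1$, $n=2$, and the cohomology-level statement) are routine.
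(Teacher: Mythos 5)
Your proposal is correct and follows essentially the same route as the paper: take the strict morphism $f_1=(k_1+k_2)Z$, verify the $n=1,2$ relations via the chain-map and $\Im(Z)$-bimodule homomorphism properties, and note the scalar $(k_1+k_2)$ must be nonzero for $f_1$ to be a quasi-isomorphism. In fact you are more careful than the paper on two points it waves through — the $n\geq 3$ relations reduce to $Z\circ m_n=0$, which you justify correctly from $Z\circ h_B=0$ and the homomorphism property migrating $Z$ onto the $h_B$ insertions, and you also address the \virgolette only if'' direction (with the appropriate caveat when $[\wedge]$ is trivial), which the paper's proof does not spell out.
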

\begin{proof}
    We construct the strict quasi-isomorphism by choosing $f_1 \colon B \to A$ by $f_1 \coloneqq (k_1 + k_2) \, Z,$ which is a quasi-isomorphism by definition, and $f_i \coloneqq 0,$ for all $i \neq 1.$

     Let us start by showing that the $A_\infty$-morphism relation (see Equation \eqref{eqn:Ainftymorph}) holds for $n=2.$ This equation becomes
     \begin{align}
         (k_1 + k_2) Z \circ m_2 =(k_1 + k_2)^2 \wedge \circ (Z \otimes Z) \, 
     \end{align}
  since $\partial_{BA} Z = 0.$ This relation is 
   satisfied because of the Property \ref{it:ImZhomo} of Definition \ref{def:homotopydatamodule}.
    
    All the  higher $A_\infty$-morphism relations (see again Equation \eqref{eqn:Ainftymorph}) are trivially satisfied because $f_i =0,$ for all $i =0,$ and the $A_\infty$-algebra $A$ only admits a non-trivial 2-product, i.e. all the $n$-products with $n > 2$ vanish. 
\end{proof}

\begin{corollary}\label{coroll:quasi-isoSC}
The $A_\infty$-algebras $(B, m_n),$ obtained in Theorem \ref{thm:Ainftyside}, with $k_1 \neq - k_2$, and $(B, m^\hot_n)$ are quasi-isomorphic.
\end{corollary}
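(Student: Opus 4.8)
The plan is to deduce this from Proposition \ref{prop:HMISCquasiisoA} by a two-step comparison through the dga $(A, \de_A, \wedge)$, combined with the invertibility of $A_\infty$-quasi-isomorphisms. Indeed, Proposition \ref{prop:HMISCquasiisoA} already supplies, for $k_1 \neq -k_2$, a strict $A_\infty$-quasi-isomorphism $f_\bullet \colon (B, m_n) \to (A, \de_A, \wedge)$ with $f_1 = (k_1+k_2)\,Z$ and $f_i = 0$ for $i \neq 1$; since $k_1 + k_2 \neq 0$ and $\Bbbk$ has characteristic zero, $f_1$ is again a quasi-isomorphism of cochain complexes. So what remains is to connect $(B, m_n^\hot)$ to the same dga.

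For that I would invoke the standard refinement of the Homotopy Transfer Theorem: the construction recalled in Subsection \ref{subsect:Homotopytransfer}, applied to the homotopy data of Definition \ref{def:homotopydatamodule}, produces not only the $A_\infty$-algebra $(B, m_n^\hot)$ but also an $A_\infty$-quasi-isomorphism $g_\bullet \colon (B, m_n^\hot) \to (A, \de_A, \wedge)$ whose linear component is $g_1 = Z$, the higher components being assembled from $\wedge$, $Z$ and $h_A$ in the usual way (see \cite{vallette:hal-00757010, Loday2020}). As $Z$ is a quasi-isomorphism by hypothesis, $g_\bullet$ is an $A_\infty$-quasi-isomorphism.

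The final step is to compose. Appealing to the theorem that any $A_\infty$-quasi-isomorphism admits a homotopy inverse which is itself an $A_\infty$-quasi-isomorphism (see \cite{Keller, vallette:hal-00757010, Loday2020}), I would fix a homotopy inverse $g_\bullet^{-1} \colon (A, \de_A, \wedge) \to (B, m_n^\hot)$ of $g_\bullet$ and form the composite $g_\bullet^{-1} \circ f_\bullet \colon (B, m_n) \to (B, m_n^\hot)$; its linear term $g_1^{-1}\circ f_1$ is a composition of quasi-isomorphisms of complexes, hence a quasi-isomorphism, so the composite is an $A_\infty$-quasi-isomorphism (not strict, in general). Equivalently, one may simply observe that \emph{being $A_\infty$-quasi-isomorphic} is an equivalence relation on $A_\infty$-algebras and that both $(B, m_n)$ and $(B, m_n^\hot)$ are $A_\infty$-quasi-isomorphic to $(A, \de_A, \wedge)$.

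The substantive ingredients here are not computations but the two imported facts — that homotopy transfer yields the comparison morphism $g_\bullet$ with $g_1 = Z$, and that $A_\infty$-quasi-isomorphisms are invertible up to homotopy — the latter being needed precisely because $f_\bullet$ and $g_\bullet$ both point toward $A$ rather than toward each other. The only point requiring care is that the sign and degree conventions for $A_\infty$-morphisms in \eqref{eqn:Ainftymorph} agree with those used in the cited invertibility statement; granting that, the corollary is immediate, and it moreover yields that the $A_\infty$-Massey products induced by $(B, m_n)$ and $(B, m_n^\hot)$ on $\sfH(B)$ are isomorphic.
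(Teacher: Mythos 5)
Your proof is correct and takes essentially the same route as the paper: both factor through the dga $(A,\de_A,\wedge)$, combining the strict quasi-isomorphism of Proposition \ref{prop:HMISCquasiisoA} with the homotopy-transfer comparison quasi-isomorphism between $(A,\de_A,\wedge)$ and $(B,m_n^\hot)$. The paper simply composes the two quasi-isomorphisms directly (taking the transfer morphism in the direction $A\to(B,m_n^\hot)$, extending $Y$), so your additional appeal to homotopy-invertibility of $A_\infty$-quasi-isomorphisms is not needed, though it is harmless and yields the same conclusion.
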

\begin{proof}
    The proof simply follows from the composition of the quasi-isomorphism defined in Proposition \ref{prop:HMISCquasiisoA} and the quasi-isomorphism between $(A,\de_A,\wedge)$ and $(B,m_{n}^{\hot})$.
\end{proof}

This says that if the side conditions are verified (and $k_1 \neq - k_2$), the $A_\infty$-algebras $(B,m_n)$ and $(B,m_n^\hot)$ are equivalent in the $A_\infty$-category. With an eye towards Example \ref{eg:supermani}, this means that on split supermanifolds the Homotopy-Module-Induction and the Homotopy-Transfer procedures will give rise to equivalent structures.

Let us now consider Theorem \ref{thm:weaksideorcoefficients}. In the case where the weak side conditions are met, it is not yet clear whether there exists a (non-strict) quasi-isomorphism between the $A_\infty$-algebras $(B,m_n)$ and $(B,m_n^\hot)$. This will be the subject of further investigation. On the other hand, one can see that if condition \ref{it:mainthm2} of Theorem \ref{thm:weaksideorcoefficients} is met, then there is an obstruction to the equivalence of these $A_\infty$-algebras:
\begin{prop}\label{prop:noquasi-iso}
    Let Theorem \ref{thm:weaksideorcoefficients} hold in the case of hypothesis \ref{it:mainthm2} and assume that the product $\wedge$ on $A$ induces a non-trivial product on $\sfH (A)$, i.e., $0 \neq [\wedge] \in \sfH \left( \Hom \left( A^{\otimes 2} , A \right) \right)$. Then the cohomology class of $\wedge$ represents an obstruction to the existence of a quasi-isomorphism $f_\bullet:(B,m_n)^{\otimes \bullet} \to (B,m_n^\hot)$.
\end{prop}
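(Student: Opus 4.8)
The plan is to argue by contradiction at the level of the lowest nontrivial $A_\infty$-morphism relation, exploiting the fact that under hypothesis \ref{it:mainthm2} (i.e. $k_1 = -k_2$) the product $m_2$ is $\partial_B$-exact, whereas $m_2^\hot$ is not exact whenever $[\wedge] \neq 0$ in $\sfH(\Hom(A^{\otimes 2},A))$. Suppose a quasi-isomorphism $f_\bullet \colon (B,m_n) \to (B,m_n^\hot)$ existed, with leading term $f_1$ a cochain quasi-isomorphism. First I would write down the $n=2$ component of the $A_\infty$-morphism relation \eqref{eqn:Ainftymorph}, which reads
\begin{align}
f_1 \circ m_2 - m_2^\hot \circ (f_1 \otimes f_1) = \partial_B f_2
\end{align}
(where I have absorbed the standard signs). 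The key point is that the left-hand side, read in the cohomology $\sfH(\Hom(B^{\otimes 2},B))$, must vanish: the term $f_1 \circ m_2$ is $\partial_B$-exact because $m_2 = \partial_B \tilde m_2$ by Lemma \ref{lemma:HTWMSG} with $k_1+k_2=0$ and $f_1$ commutes with the differentials, while $m_2^\hot \circ (f_1\otimes f_1)$ is cohomologous to $m_2^\hot$ up to the usual transfer-of-structure argument since $f_1$ is a quasi-isomorphism. Hence $[m_2^\hot] = 0 \in \sfH(\Hom(B^{\otimes 2},B))$.

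The remaining step is to transport this vanishing back to $A$ and contradict the hypothesis $[\wedge] \neq 0$. Here I would use that $(A,\de_A,\wedge)$ and $(B, m_n^\hot)$ are quasi-isomorphic as $A_\infty$-algebras (the homotopy transfer data), so the induced chain map $Z^{\otimes 2} \colon \Hom(B^{\otimes 2},B) \to \Hom(A^{\otimes 2},A)$ together with $Y$ realises an isomorphism in cohomology identifying $[m_2^\hot]$ with $[\wedge]$; concretely $m_2^\hot = Y \circ \wedge \circ Z^{\otimes 2}$ and $Y\circ Z$ is homotopic to the identity, so the cohomology classes correspond. Therefore $[m_2^\hot] = 0$ forces $[\wedge] = 0$, contradicting the assumption. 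This yields the obstruction: the nonvanishing of $[\wedge]$ prevents the existence of any such $f_\bullet$.

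The main obstacle I anticipate is making the second paragraph's correspondence precise — i.e. being careful that ``$m_2^\hot \circ (f_1\otimes f_1)$ is cohomologous to $m_2^\hot$'' and that the class $[m_2^\hot]$ in $\sfH(\Hom(B^{\otimes 2},B))$ really corresponds to $[\wedge]$ under the quasi-isomorphism, rather than to something that could be exact for spurious reasons. This requires invoking that $f_1$, being a quasi-isomorphism of complexes, induces an isomorphism on $\sfH(\Hom(B^{\otimes 2},B))$ (functoriality of $\Hom$ with one slot covariant and two slots contravariant, all quasi-isomorphisms), and similarly for $Y,Z$; one must also check that the sign conventions in \eqref{eqn:Ainftymorph} do not introduce a stray factor that trivialises the argument. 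A secondary subtlety is that $f_2$ need not be a bimodule map or satisfy any side condition, so the exactness of the left-hand side must be deduced purely from $\partial_B f_2$ being a boundary by definition — which is automatic — together with the exactness of $f_1\circ m_2$; no further structural input on $f_2$ is needed, which is what makes the obstruction genuinely cohomological.
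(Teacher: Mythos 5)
Your proposal is correct and follows essentially the same route as the paper: plug the exactness $m_2=\partial_B\tilde m_2$ (Lemma \ref{lemma:HTWMSG} with $k_1+k_2=0$) into the $n=2$ component of \eqref{eqn:Ainftymorph}, deduce that $m_2^\hot\circ f_1^{\otimes 2}$ is $\partial_B$-exact, use that $f_1$ is a quasi-isomorphism to conclude $[m_2^\hot]=0$, and transport this via $Y,Z$ back to $[\wedge]=0$, contradicting the hypothesis. The subtleties you flag (that pre/post-composition with quasi-isomorphisms induces isomorphisms on the cohomology of the $\Hom$-complexes, working over a field) are exactly the points the paper's own proof asserts without elaboration, so nothing in your argument diverges from theirs.
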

\begin{proof}
    Suppose an $A_\infty$-quasi-isomorphism $f_\bullet \colon (B,m_n)^{\otimes \bullet} \to (B,m_n^\hot)$ exists, then the maps $f_\bullet$ satisfy Equation \eqref{eqn:Ainftymorph}. Recall that, by Lemma \ref{lemma:HTWMSG}, $m_2 = \partial_B \tilde m_2$. By plugging this in Equation \eqref{eqn:Ainftymorph} for $n=2$, we get
    \begin{align}
        f_1 \circ m_2 = f_1 \circ \partial_B \tilde m_2 = m_2^\hot \circ f_1^{\otimes 2} + \partial_B f_2 \ \implies \ m_2^\hot \circ f_1^{\otimes 2} = \partial_B \lambda \ ,
    \end{align}
    where $\lambda \in \Hom \left( B^{\otimes 2} , B \right)$ of degree $-1$. Being $f_1$ a quasi-isomorphism, this implies that
    \begin{align}
        m_2^\hot = \partial_B \sigma \ ,
    \end{align}
      for some  $\sigma \in \Hom \left( B^{\otimes 2} , B \right)$, 
    which is true if and only if
    \begin{align}
        \wedge = \partial_A \rho \ , 
\end{align}
for some $\rho \in \Hom \left( A^{\otimes 2} , A \right)$. Hence $f_1$ can be a quasi-isomorphism if and only if $[\wedge]=0$.
\end{proof}

\subsection{Massey Products}~

In the previous paragraphs, we have discussed under which assumptions the $A_\infty$-algebras $(B,m_n)$ and $(B,m_n^\hot)$ are quasi-isomorphic, as shown in Corollary \ref{coroll:quasi-isoSC} and Proposition \ref{prop:noquasi-iso}. This naturally leads to considerations regarding the \emph{Massey $A_\infty$-algebras} that are induced on $\sfH (B)$ (see, e.g., \cite{vallette:hal-00757010}). These algebras are intrinsically related to intersection theory on manifolds and their \emph{formality} (see also, e.g., \cite{Deligne:1975}).

These structures are induced by homotopy transfer of $(B , m_n )$ and $(B, m_n^\hot)$, respectively, to $\sfH (B)$. In particular, consider the following setup of quasi-isomorphic complexes:
\begin{equation}\label{MasseyA}
	\begin{tikzcd}[every arrow/.append style={shift left}]
		\hspace{-.2cm} \left( A , \de_A \right) \arrow{r}{Y}  & \left( B , \de_B \right) \arrow[l, "Z"] \arrow{r}{p}  & \left( \sfH (B) , 0 \right) \arrow[l, "i"] \, ,
	\end{tikzcd}
\end{equation}
where the complexes $A$ and $B$ satisfy the assumptions of Definition \eqref{def:homotopydatamodule}. The \virgolette pivotal'' complex $B$ comes with two homotopies $h_B$ and $h'_B$ related to the two maps $i \circ p$ and $Y \circ Z$ as
\begin{align}
    \partial_B h_B = \unit_B - Y \circ Z \ , \ \partial_B h'_B = \unit_B - i \circ p \ .
\end{align}
In particular, the homotopy operator $h_B$ is the one used in the Homotopy Module-Induction to induce an $A_\infty$ algebra on $B$, while the homotopy operator $h'_B$ is used in the homotopy transfer construction from $B$ to $\sfH(B)$ to induce the Massey products. We will denote the $A_\infty$-algebras transferred from $(B,m_n)$ and $(B,m_n^\hot)$ to $\sfH (B)$ by $(\sfH (B), \mathsf{M}_n)$ and $\sfH (B), \mathsf{M}_n^\hot)$, respectively.

Let us consider the situation where Theorem \ref{thm:Ainftyside} holds: as a Corollary of Proposition \ref{prop:HMISCquasiisoA} we have
\begin{corollary}
    The $A_\infty$-algebras $(\sfH (B), \mathsf{M}_n)$ and $(\sfH (B), \mathsf{M}_n^\hot)$ are isomorphic.
\end{corollary}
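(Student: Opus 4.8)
The plan is to exhibit both $(\sfH(B),\mathsf{M}_n)$ and $(\sfH(B),\mathsf{M}_n^\hot)$ as minimal models of one and the same $A_\infty$-quasi-isomorphism class, and then to invoke the uniqueness of minimal models. Throughout I would work in the regime $k_1\neq -k_2$, the one in which Proposition \ref{prop:HMISCquasiisoA} produces an honest quasi-isomorphism; if $k_1=-k_2$ the induced product is $\partial_B$-exact by Remark \ref{rmk:HTWMSG} and the statement has to be read in that degenerate sense.

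First I would record that $(B,m_n)$ and $(B,m_n^\hot)$ are $A_\infty$-quasi-isomorphic, which is precisely Corollary \ref{coroll:quasi-isoSC}: Proposition \ref{prop:HMISCquasiisoA} gives a strict $A_\infty$-quasi-isomorphism $(B,m_n)\to(A,\de_A,\wedge)$ with linear part $(k_1+k_2)Z$, while the Homotopy Transfer Theorem, being constructive, comes equipped with an $A_\infty$-quasi-isomorphism relating $(A,\de_A,\wedge)$ and $(B,m_n^\hot)$. Since over a field of characteristic zero every $A_\infty$-quasi-isomorphism admits a quasi-inverse (see \cite{vallette:hal-00757010, Loday2020, Keller}), composing these yields an $A_\infty$-quasi-isomorphism $g_\bullet\colon(B,m_n)\to(B,m_n^\hot)$.

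Next I would use that, by construction, $\mathsf{M}_n$ and $\mathsf{M}_n^\hot$ are obtained by homotopy transfer of $(B,m_n)$ and of $(B,m_n^\hot)$ respectively along the \emph{same} homotopy data of diagram \eqref{MasseyA}, with $p\circ i=\unit_{\sfH(B)}$ and $\partial_B h'_B=\unit_B-i\circ p$. Hence $(\sfH(B),\mathsf{M}_n)$ is a minimal model of $(B,m_n)$ and $(\sfH(B),\mathsf{M}_n^\hot)$ is a minimal model of $(B,m_n^\hot)$, each carrying an $A_\infty$-quasi-isomorphism extending the inclusion $i$. Composing a quasi-inverse of the inclusion for $\mathsf{M}_n$, then $g_\bullet$, then the inclusion for $\mathsf{M}_n^\hot$, produces an $A_\infty$-quasi-isomorphism $\phi_\bullet\colon(\sfH(B),\mathsf{M}_n)\to(\sfH(B),\mathsf{M}_n^\hot)$; its linear component is a quasi-isomorphism of complexes with vanishing differentials, hence an isomorphism of graded vector spaces, and an $A_\infty$-morphism with invertible linear component is an $A_\infty$-isomorphism. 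Equivalently one may simply quote that the minimal model of an $A_\infty$-algebra is unique up to $A_\infty$-isomorphism \cite{Kadeishvili, vallette:hal-00757010}.

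The argument is essentially formal, and the one point that requires care is the bookkeeping of which homotopy is used where: the Massey structures $\mathsf{M}_n$ and $\mathsf{M}_n^\hot$ are both built with the homotopy $h'_B$ associated with $i\circ p$ (not with the $h_B$ used in the Homotopy Module-Induction), so the two transfers differ only through their input $A_\infty$-structure on $B$, and it is exactly this uniformity that lets Corollary \ref{coroll:quasi-isoSC} propagate to $\sfH(B)$. The genuinely hard work sits upstream in Proposition \ref{prop:HMISCquasiisoA}; granting that, the present corollary is a direct application of the standard homotopy theory of $A_\infty$-algebras.
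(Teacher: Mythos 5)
Your argument is correct and is exactly the route the paper intends: the corollary is stated as an immediate consequence of Proposition \ref{prop:HMISCquasiisoA} and Corollary \ref{coroll:quasi-isoSC}, i.e.\ the quasi-isomorphism of $(B,m_n)$ and $(B,m_n^\hot)$ (for $k_1\neq -k_2$) passes to their minimal models on $\sfH(B)$, where an $A_\infty$-quasi-isomorphism between algebras with zero differential is an isomorphism. Your write-up merely makes explicit the standard homotopy-theoretic steps (quasi-inverses, uniqueness of minimal models, and the fact that both transfers use the same homotopy $h'_B$) that the paper leaves implicit, including the correct caveat about the regime $k_1\neq -k_2$.
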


On the other hand, consider the situation where Proposition \ref{prop:noquasi-iso} holds, we have
\begin{corollary}
    The $A_\infty$-algebras $(\sfH (B), \mathsf{M}_n)$ and $(\sfH (B), \mathsf{M}_n^\hot)$ are not isomorphic. In particular, $\mathsf{M}_2 = 0$.
\end{corollary}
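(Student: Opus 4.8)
The plan is to prove the two assertions separately: first that $\mathsf{M}_2 = 0$, and then that $\mathsf{M}_2^\hot \neq 0$, from which no $A_\infty$-isomorphism between the two transferred structures can exist. Under hypothesis \ref{it:mainthm2} we have $k_1 + k_2 = 0$, so Lemma \ref{lemma:HTWMSG} gives $m_2 = \partial_B \tilde m_2$, i.e.\ the Homotopy Module-Induced product is $\partial_B$-exact. The lowest product transferred along the retract $(B,\de_B) \rightleftarrows (\sfH(B),0)$ of diagram \eqref{MasseyA} is just $\mathsf{M}_2 = p \circ m_2 \circ (i \otimes i)$, with no $h'_B$-corrections. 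Writing out $\partial_B \tilde m_2 = \de_B \circ \tilde m_2 \pm \tilde m_2 \circ \de_{B^{\otimes 2}}$ and using that $p$ is a chain map to a complex with zero differential (so $p \circ \de_B = 0$) together with the fact that $i$ lands in cocycles (so $\de_{B^{\otimes 2}} \circ (i \otimes i) = 0$), one gets $\mathsf{M}_2 = 0$. This is precisely the ``trivialisation of the ring structure on cohomology'' already observed in Remark \ref{rmk:HTWMSG}.

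Next I would identify $\mathsf{M}_2^\hot$ with the product induced by $\wedge$ on cohomology. Since $m_2^\hot = Y \circ \wedge \circ Z^{\otimes 2}$, the transferred product is $\mathsf{M}_2^\hot = (p \circ Y) \circ \wedge \circ \bigl( (Z \circ i) \otimes (Z \circ i) \bigr)$; moreover $Z \circ i \colon (\sfH(B),0) \to (A,\de_A)$ and $p \circ Y \colon (A,\de_A) \to (\sfH(B),0)$ are quasi-isomorphisms with $(p\circ Y) \circ (Z \circ i) = \unit_{\sfH(B)}$ (because $Y\circ Z = \unit_B - \partial_B h_B$ and $p(\partial_B h_B)i = 0$ by the same cancellation as above). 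Hence $\mathsf{M}_2^\hot$ is exactly the product that $\wedge$ induces on $\sfH(A) \cong \sfH(B)$. By the standing hypothesis of Proposition \ref{prop:noquasi-iso} we have $0 \neq [\wedge] \in \sfH(\Hom(A^{\otimes 2},A))$; over a field of characteristic zero, Künneth identifies $\sfH(\Hom(A^{\otimes 2},A)) \cong \Hom(\sfH(A)^{\otimes 2}, \sfH(A))$ carrying $[\wedge]$ to this induced product, so $\mathsf{M}_2^\hot \neq 0$. Finally, were $f_\bullet \colon (\sfH(B),\mathsf{M}_n) \to (\sfH(B),\mathsf{M}_n^\hot)$ an $A_\infty$-isomorphism, Equation \eqref{eqn:Ainftymorph} at arity two, with both internal differentials $\mathsf{M}_1 = \mathsf{M}_1^\hot = 0$, would read $f_1 \circ \mathsf{M}_2 = \mathsf{M}_2^\hot \circ (f_1 \otimes f_1)$ with $f_1$ a linear isomorphism; since $\mathsf{M}_2 = 0$ this forces $\mathsf{M}_2^\hot = 0$, a contradiction. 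Thus the two $A_\infty$-algebras are not isomorphic, and in particular $\mathsf{M}_2 = 0$.

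As a more structural alternative that avoids the Künneth bookkeeping, I would invoke the Homotopy Transfer Theorem to say that $(\sfH(B),\mathsf{M}_n)$ and $(\sfH(B),\mathsf{M}_n^\hot)$ are $A_\infty$-quasi-isomorphic to $(B,m_n)$ and $(B,m_n^\hot)$ respectively; since $A_\infty$-quasi-isomorphisms admit quasi-inverses, an isomorphism between the two minimal models would compose into an $A_\infty$-quasi-isomorphism $(B,m_n) \to (B,m_n^\hot)$, contradicting Proposition \ref{prop:noquasi-iso}. I expect the only genuinely delicate point in the first, more explicit route to be the passage from ``$[\wedge]$ nonzero in $\sfH(\Hom(A^{\otimes 2},A))$'' to ``$\wedge$ induces a nonzero product on $\sfH(A)$'', i.e.\ injectivity of the natural map $\sfH(\Hom(A^{\otimes 2},A)) \to \Hom(\sfH(A)^{\otimes 2},\sfH(A))$: this is where characteristic zero enters (so that $\sfH(A^{\otimes 2}) = \sfH(A)^{\otimes 2}$ with no Tor terms, and every complex splits off its cohomology, making $\Hom$ of the contractible complement contractible). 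In the unbounded setting of Example \ref{eg:supermani} this splitting argument is the one point needing care, and the structural route sidesteps it entirely while still yielding the non-isomorphism.
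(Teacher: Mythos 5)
Your argument is correct and coincides with the one the paper leaves implicit for this corollary: $\mathsf{M}_2 = p \circ m_2 \circ i^{\otimes 2}$ vanishes because $m_2 = \partial_B \tilde m_2$ when $k_1 = -k_2$ (Lemma \ref{lemma:HTWMSG} and Remark \ref{rmk:HTWMSG}), while $\mathsf{M}_2^\hot$ is identified with the product induced by $\wedge$ on $\sfH(A) \cong \sfH(B)$, which is nontrivial by the standing hypothesis of Proposition \ref{prop:noquasi-iso}, so the arity-two morphism relation $f_1 \circ \mathsf{M}_2 = \mathsf{M}_2^\hot \circ f_1^{\otimes 2}$ (the $f_2$-term drops since both transferred differentials vanish) rules out any $A_\infty$-isomorphism. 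Your Künneth caveat is not needed under the paper's own reading of its hypothesis, which takes ``$0 \neq [\wedge]$'' to mean precisely that the induced product on $\sfH(A)$ is nonzero, and your alternative structural route (composing minimal-model quasi-isomorphisms and their quasi-inverses to contradict Proposition \ref{prop:noquasi-iso}) is an equally valid shortcut.
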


An open question is left regarding the situation where Theorem \ref{thm:weaksideorcoefficients} holds in the case of weak side conditions. Determining whether $(\sfH (B), \mathsf{M}_n)$ and $(\sfH (B), \mathsf{M}_n^\hot)$ are isomorphic appears to be non-trivial. We defer this task to the later work \cite{Cremonini-future}. 

We shall now investigate whether the Homotopy Module-Induction can be seen as a deformation of the Homotopy transfer $A_\infty$-algebra.
To this, let us recall some basic facts about Hochschild cohomology.

\subsection{Infinitesimal Hochschild Deformations and Homotopy Module-Induction}~

Consider the complex $B$, equipped with the $A_\infty$-structure induced via homotopy transfer $\left( B , m_n^\hot \right)$ and define the Hochschild cochains $C^\bullet \left( B \right) \coloneqq \Hom \left( \mathsf{T} \left( B \right) , B \right),$ where $\mathsf{T}(B)$ is the tensor space of $B.$ We can use the algebraic structure on $B$ to define a coboundary operator, which is the extension of the operator $\partial_B \colon \Hom \left( \mathsf{T} \left( B \right) , B \right) \to \Hom \left( \mathsf{T} \left( B \right) , B \right)$, as \footnote{The signs are chosen according to the convention of Equations \eqref{HMTWSDB} and \eqref{HMTWSDBA}.}
\begin{align}\label{HDNIMBYA}
    \de^\sfH \mu \coloneqq \sum_{n=1}^\infty \left( (-1)^{n-1} m_n^\hot \circ \mu - (-1)^{\mu} \mu \circ m_n ^\hot \right) \ ,
\end{align}
for a given $\mu \in \Hom \left( \mathsf{T} \left( B \right) , B \right)$, called the \emph{Hochschild differential}, where $m_1^\hot= \de_B$ and the bracket is the extension to $\Hom(\mathsf{T}(B), B)$ of the commutator on $\Hom(B,B).$  
The nilpotency of $\de^\sfH$ follows from the $A_\infty$-relations \eqref{condo}. Given an element $\mu \in \Hom \left( \mathsf{T} \left( B \right) , B \right)$, we say that it is an \emph{infinitesimal deformation} of the $A_\infty$-algebra if $\de^\sfH \mu = 0$. If $\mu = \de^\sfH \nu$, for some $\nu \in \Hom \left( \mathsf{T} \left( B \right) , B \right)$, we say that the deformation is \emph{trivial}. It is then clear that non-trivial (infinitesimal) deformations are given by the cohomology groups determined by $\de^\sfH$; this cohomology is known as \emph{Hochschild cohomology} and it is denoted as $\sfH \sfH \left( B \right)$. 

In the previous sections, we have shown how to induce on the complex $B$ algebraic structures with a procedure leveraging on the $\Im(Z)$-bimodule structure; in this section, we address whether the Homotopy Module-Induced $A_\infty$-structures are related to the $A_\infty$-algebra obtained from the homotopy transfer procedure. In particular, we will discuss if the structures obtained in Section \ref{sect:HomotopyTransMod} are a Hochschild deformation (possibly trivial) of the usual one. It will turn out that this is not the case. 

\begin{prop} \label{prop:notHocdef}
Assume that Theorem \eqref{thm:Ainftyside} holds and let $(B, m_n^\hot)$ and $(B, m_n)$ be the $A_\infty$-algebras induced by the homotopy transfer and the Homotopy Module-Induction, respectively. Then $(B, m_n)$ is not an infinitesimal deformation of $(B, m_n^\hot).$
\end{prop}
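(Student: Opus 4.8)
The plan is to show that the difference $m_2 - m_2^\hot$ (extended by zero in all other arities, or rather the full collection $\{m_n - m_n^\hot\}$) fails to be a Hochschild cocycle for $\de^\sfH$, and in fact fails to even be $\de^\sfH$-closed up to a coboundary, so that $(B,m_n)$ cannot be obtained from $(B,m_n^\hot)$ by adding an infinitesimal deformation. First I would recall that an infinitesimal deformation of $(B,m_n^\hot)$ is, by definition, a collection $\mu=\{\mu_n\}\in\Hom(\mathsf T(B),B)$ with $\de^\sfH\mu=0$, and that $(B,m_n)$ being such a deformation would mean precisely that $\mu\coloneqq\{m_n-m_n^\hot\}$ satisfies $\de^\sfH\mu=0$ (to first order). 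So the statement reduces to showing $\de^\sfH(m_\bullet - m_\bullet^\hot)\neq 0$.

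The key computational input is Lemma \ref{lemma:HTWMSG}: $m_2 - (k_1+k_2)m_2^\hot = \partial_B\tilde m_2$. I would split into the two cases already singled out in the paper. If $k_1+k_2\neq 0$, rescale so that without loss of generality the relevant comparison is between $m_2$ and $(k_1+k_2)m_2^\hot$; the arity-$2$ component of $\de^\sfH(m_\bullet-m_\bullet^\hot)$ is then controlled by $\partial_B$ applied to $m_2-m_2^\hot$ together with the bracket terms $[m_1^\hot,-]$, and the essential point is that $m_2 - m_2^\hot$ is \emph{not} $\partial_B$-exact in $\Hom(B^{\otimes 2},B)$ even though $m_2-(k_1+k_2)m_2^\hot$ is — because $m_2^\hot = Y\circ\wedge\circ Z^{\otimes 2}$ represents the (by hypothesis, implicitly, nontrivial) class $[\wedge]$, exactly as exploited in Proposition \ref{prop:noquasi-iso}. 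More directly: were $\{m_n-m_n^\hot\}$ a cocycle, its cohomology class in $\sfH\sfH(B)$ would have to vanish for $(B,m_n)$ to be a \emph{trivial} deformation, but the claim here is stronger — it is not a deformation at all, because the $n=2$ Hochschild-cocycle equation $\de^\sfH(m_\bullet-m_\bullet^\hot)=0$ already fails. I would verify this by computing the arity-$3$ piece: the quadratic-in-$\mu$ obstruction is irrelevant at the infinitesimal level, so the failure must be linear, and one reads it off from $\partial_B(m_3-m_3^\hot) - \big((m_2-m_2^\hot)\circ m_2^\hot + m_2^\hot\circ(m_2-m_2^\hot)\big)$, which by associativity-up-to-homotopy of each of $m_2$, $m_2^\hot$ reduces to a term proportional to the associator defect $\Ass$ of $m_2$, i.e. to $k_1k_2$ times the bimodule expression in Equation \eqref{HTWMSO}; this is a genuinely nonzero element of $\Hom(B^{\otimes 3},B)$ and is not $\partial_B$-exact by the same argument. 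For the case $k_1=-k_2$ (where $m_2$ is itself $\partial_B$-exact but not associative), I would instead use Proposition \ref{prop:noquasi-iso} directly: a trivial deformation, and indeed any deformation, would yield an $A_\infty$-quasi-isomorphism $(B,m_n)\to(B,m_n^\hot)$ (the identity on objects, deformed), contradicting the obstruction $[\wedge]\neq 0$ found there.

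The main obstacle will be bookkeeping the signs in $\de^\sfH$ (Equation \eqref{HDNIMBYA}) against the sign conventions in \eqref{HMTWSDB}–\eqref{HMTWSDBA}, and making precise the claim that the relevant expressions are not $\partial_B$-exact. The cleanest way around the latter is to \emph{avoid} explicit non-exactness arguments at arity $3$ and instead run the whole thing through Proposition \ref{prop:HMISCquasiisoA}: under the hypotheses of Theorem \ref{thm:Ainftyside} with $k_1\neq-k_2$, $(B,m_n)$ is \emph{strictly} quasi-isomorphic to $(A,\de_A,\wedge)$, whereas an infinitesimal deformation of $(B,m_n^\hot)$ would be quasi-isomorphic, to first order, to $(B,m_n^\hot)$ itself, hence to $(A,\de_A,\wedge)$ — which is consistent, so strictness alone does not obstruct. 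Therefore the argument genuinely must detect the \emph{cocycle condition} failing, and I expect the heart of the proof to be the identity
\begin{align}
\de^\sfH(m_\bullet - m_\bullet^\hot)\big|_{\text{arity }3} = -\,k_1k_2\,\Ass \;+\;(\text{coboundary and bracket terms that cancel})\,,
\end{align}
together with the observation that $k_1k_2\,\Ass$, given by Equation \eqref{HTWMSO}, is not of the form $\de^\sfH(\text{something})$ — equivalently, one checks that no $\nu$ with $\de^\sfH\nu = m_\bullet-m_\bullet^\hot$ can exist, by projecting to $\sfH\sfH(B)$ and using that $[\wedge]\neq 0$ forces the relevant class to be nonzero. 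Once the sign dust settles, the conclusion is immediate in both cases, with the associative-$m_2$ situation ($k_1k_2\neq0$ and $\Ass=\partial_B m_3$ with $m_3$ itself the obstruction generator) being the "relevant example" the paper advertises right after the statement.
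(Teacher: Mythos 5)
Your opening reduction is the same as the paper's: set $\mu=\sum_{i\geq 2}(m_i-m_i^\hot)$ and show $\de^\sfH\mu\neq 0$. But the execution has genuine gaps. First, the arity-$3$ component does \emph{not} reduce to ``a term proportional to $\Ass$'': using $\partial_B m_3^\hot=0$, $m_2^\hot\circ m_2^\hot=0$ (Proposition \ref{prop:closedproducts}) and $m_2^\hot\circ m_2=0$ (the $\Im(Z)$-homomorphism property of $Z$), it reduces to $\partial_B m_3-m_2\circ m_2^\hot$, and since $\partial_B m_3=\Ass$ exactly (Proposition \ref{prop:AssModule}), the whole question is whether $m_2\circ m_2^\hot$ coincides with $\Ass$. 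The paper settles this by an explicit computation of $m_2\circ m_2^\hot$ (it equals $k_1(Z(b)\wedge Z(b'))\rhd b''-k_2\,b\lhd(Z(b')\wedge Z(b''))+(k_2-k_1)Y(Z(b)\wedge Z(b')\wedge Z(b''))$, to be compared with the $k_1k_2$ coefficients in $\Ass$); this is purely a comparison of explicit module-action expressions and needs no non-exactness or $[\wedge]\neq 0$ input. Your repeated appeals to ``$m_2-m_2^\hot$ is not $\partial_B$-exact'', to projecting to $\sfH\sfH(B)$, and to $[\wedge]\neq 0$ conflate the cocycle condition (what must fail here) with non-triviality of a deformation (a different question), and moreover smuggle in a hypothesis ($[\wedge]\neq 0$) that Proposition \ref{prop:notHocdef} does not assume.

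Second, your case split is the wrong one. The exceptional case is not $k_1=-k_2$ but $k_1=k_2=-1$ (in the paper's sign conventions): there the arity-$3$ cocycle equation \emph{does} hold, so your arity-$3$ argument gives nothing, and one must go to arity $4$, where the paper shows the relation forces $\partial_B m_4=0$ (since the $h_B$-terms and the $h_A$-terms of $m_3^\hot\circ m_2-m_2\circ m_3^\hot$ cannot cancel against each other), a contradiction. Your proposed treatment of $k_1=-k_2$ via Proposition \ref{prop:noquasi-iso} is also flawed in itself: an infinitesimal deformation does not in general produce an $A_\infty$-quasi-isomorphism between the deformed and undeformed algebras (only trivial deformations are equivalences), so ``any deformation would contradict the obstruction $[\wedge]\neq 0$'' does not follow; and again that route needs $[\wedge]\neq 0$, which is not available. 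To repair the proof you need the paper's two explicit computations: the arity-$3$ comparison of $\partial_B m_3$ with $m_2\circ m_2^\hot$ for generic $k_1,k_2$, and the arity-$4$ argument in the residual case.
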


\begin{proof}
Consider the map
\begin{align}\label{HDNIMBYB}
    \mu \coloneqq \sum_{i=2}^\infty \left( m_i - m_i^\hot \right) \in \Hom \left( \mathsf{T} \left( B \right) , B \right) \ ;
\end{align}
we need to verify whether $\de^\sfH \mu = 0$ holds. We will proceed by checking the equality in each degree. The first relation to check reads
\begin{align}\label{HDNIMBYC}
    \partial_B \left( m_2 - m_2^\hot \right) = 0 \ ,
\end{align}
which is true since both $m_2$ and $m_2^\hot$ are Leibniz-compatible with $\de_B$. The second relation reads
\begin{align}\label{HDNIMBYD}
    \partial_B (m_3-m_3^\hot) - m_2^\hot \circ ( m_2- m_2^\hot) - (m_2 - m_2^\hot) \circ m_2^\hot = 0 \,, 
\end{align}
which can be written as
\begin{align}\label{HDNIMBYE}
    \partial_B m_3 - m_2^\hot \circ m_2 - m_2 \circ m_2^\hot = 0 \,, 
\end{align}
where we used $\partial_B m_3^\hot =0$ and $m_2^\hot \circ m_2^\hot = 0,$ see Proposition \ref{prop:closedproducts}. 
By using the properties of the map $Z$, we have
\begin{align}\label{HDNIMBYF}
    m_2^\hot \circ m_2 \left( b \otimes b' \otimes b'' \right) = m_2^\hot \left( m_2 \left( b \otimes b' \right) \otimes b'' \right) - m_2^\hot \left( b \otimes m_2 \left( b' \otimes b'' \right) \right) = 0 \ .
\end{align}
On the other hand, we can explicitly verify that 
\begin{align}\label{HDNIMBYG}
    m_2 \circ m_2^\hot \left( b \otimes b' \otimes b'' \right) = & k_1 \left( Z ( b ) \wedge Z (b') \right) \rhd b'' - k_2 b \lhd \left( Z ( b' ) \wedge Z (b'') \right) \\
    & + (k_2 - k_1) Y \left( Z (b) \wedge Z (b') \wedge Z (b'') \right) \ .
\end{align}
Considering coefficients $k_1, \, k_2$ such that $k_1 \neq -1$ and $k_2 \neq -1$, Equation \eqref{HDNIMBYG} is different from $\partial_B m_3$, see Equation \eqref{HTWMSNA}, thus showing that $\mu$ does not correspond to a deformation. 

In the case $k_1=k_2= -1,$ we immediately see from Equations \eqref{HDNIMBYG} and \eqref{HTWMSNA} that Equation \eqref{HDNIMBYD} is verified. 
Thus, the next relation to check reads
\begin{align}\label{HDNIMBYH}
     \partial_B ( m_4 - m_4^\hot ) & - m_2^\hot \circ ( m_3 - m_3^\hot ) + (m_3 - m_3^\hot) \circ m_2^\hot \\
    & + m_3^\hot \circ ( m_2 - m_2^\hot ) - (m_2 - m_2^\hot) \circ m_3^\hot = 0 \ .
\end{align}
Note that we can use the $A_\infty$-relation for the homotopy transfer with $n=4$ together with Proposition \ref{prop:closedproducts} in order to cancel the terms having $m_n^\hot$ only. Moreover, $m_2^\hot \circ m_3 = 0$ because of the properties \ref{it:ImZhomo} of $Z$, together with $Z \circ h_B = 0, $ and $m_3 \circ m_2^\hot = 0$, because of $h_B \circ Y = 0$. Thus Equation \eqref{HDNIMBYH} can be written as
\begin{align}\label{HDNIMBYI}
    \partial_B m_4 + m_3^\hot \circ m_2 - m_2 \circ m_3^\hot = 0 \, , 
\end{align}

In order for  Equation \eqref{HDNIMBYI} to hold, one should verify that the terms containing $h_B$ (i.e., $\partial_B m_4$) and those containing $h_A$ (the last two) vanish separately. This would imply that $\partial_B m_4 = 0$, which is not satisfied. We conclude that
\begin{align}
    \de^\sfH \mu \neq 0 \ ,
\end{align}
so that there is no deformation connecting the homotopy transfer $A_\infty$-algebra and the Homotopy Module-Induced one.
\end{proof}

\begin{example}[\textbf{Associative algebras}]
    The failure of the infinitesimal deformation prescription happens even at the level of associative algebras. In particular, let $\left( B , m_2^\hot \right)$ and $\left( B , \left. m_2 \right|_{k_1=1,k_2=0} \right)$ be associative algebras, where the product of the latter algebra is defined by fixing the coefficients of Equation \eqref{HTWMSG} to $k_1=1,k_2=0.$ We assume the same conditions for the maps $Z$ and $Y$ as in Definition \eqref{def:homotopydatamodule}. In order to check whether the product $\left. m_2 \right|_{k_1=1,k_2=0}$ can be obtained via a deformation of $m_2^\hot$, we need to check if 
    \begin{align}
    \mu \coloneqq \left. m_2 \right|_{k_1=1,k_2=0} - m_2^\hot
    \end{align}
    is closed under the action of the Hochschild differential given by $\de^\sfH = \left[ m_2^\hot , \, \cdot \, \right],$ as discussed in the proof of Proposition \ref{prop:notHocdef}. Note that if the closure condition of $\mu$ with respect to $\de^\sfH$ held, then there would exist a one-parameter family of maps $m (t) \in \Hom \left( B^{\otimes 2} , B \right) [t]$ at most linear in $t$ such that $m(0) = m_2^\hot$ and $m(1)= \left. m_2 \right|_{k_1=1,k_2=0}$. Explicitly, we have
    \begin{align}
        \de^\sfH \mu  & = - m_2^\hot \circ ( \left. m_2 \right|_{k_1=1,k_2=0} - m_2^\hot) - (\left. m_2 \right|_{k_1=1,k_2=0} - m_2^\hot ) \circ m_2^\hot \\
        & = - m_2^\hot \circ \left. m_2 \right|_{k_1=1,k_2=0} - \left. m_2 \right|_{k_1=1,k_2=0} \circ m_2^\hot \ ,
    \end{align}
    which gives
    \begin{align}
        \de^\sfH \mu \left( b \otimes b' \otimes b'' \right)  = & - Y \left( Z \left( Z (b) \rhd b' \right) \wedge Z ( b'' ) \right) + Y \left( Z (b) \wedge Z \left( Z \left( b' \right) \rhd b'' \right) \right) \\
        & - Z \circ Y \left( Z ( b ) \wedge Z ( b' ) \right) \rhd b'' + Y \left( Z ( b ) \wedge Z ( b' ) \wedge Z ( b'' ) \right) \\
         = & - Z \circ Y \left( Z ( b ) \wedge Z ( b' ) \right) \rhd b'' + Y \left( Z ( b ) \wedge Z ( b' ) \wedge Z ( b'' ) \right) \neq 0 \ .
    \end{align}
    This shows that $\mu$ is not a Hochschild cocycle, thus there is no infinitesimal deformation connecting the two algebraic structures.
\end{example}

\printbibliography

\end{document}